\theoremstyle{plain} \numberwithin{equation}{section}
\newtheorem{theorem}{Theorem}[section]
\newtheorem{corollary}[theorem]{Corollary}
\newtheorem{lemma}[theorem]{Lemma}
\newtheorem{proposition}[theorem]{Proposition}
\theoremstyle{definition}
\newtheorem{definition}[theorem]{Definition}
\newtheorem{remark}[theorem]{Remark}
\newtheorem{example}[theorem]{Example}
\newcommand{\C}{\mathcal{C}}
\newcommand{\D}{\mathcal{D}}
\newcommand{\G}{\mathcal{G}}
\newcommand{\M}{\mathcal{M}}
\def \f-{f^{-1}}
\def \fp-{f^{-1}_\partial}
\title{Concentration structures on categories and horizontal categorification}
\author{Yangxiao Luo, Shunyu Wan}
\date{}
\begin{document}

\maketitle

\begin{abstract}

We introduce a theory for encoding and manipulating algebraic data on categories via \textit{concentration structures}, which are equivalence relations on morphisms that satisfy certain axioms. For any category with a concentration structure we can functorially construct a \textit{concentration monoid}, which can be used to give a precise definition of horizontal categorification and decategorification. Moreover, by studying concentration structures on fundamental groupoids, we show that every group arises as the concentration monoid of a trivial category, up to category equivalence.


\end{abstract}

\section{Introduction}

The study of additional structures on categories has long provided new perspectives on both category theory and its applications to geometry and topology. Classical enhancements, such as monoidal structures, enrichment \cite{MR651714}, or higher-categorical refinements \cite{leinster1998basicbicategories} \cite{MR2522659}, allow one to extract algebraic invariants from categories or to model geometric phenomena within a categorical framework. One of the most significant developments arsing from this viewpoint is categorification.



The usual (vertical) categorification was introduced by Crane \cite{Crane:1995qj, MR1295461}, as the process of replacing set-theoretic structures by higher categorical analogues so that equalities are replaced by isomorphisms and functions by functors. In contrast, the process of decategorification quotients out higher categorical structures while retaining the isomorphism classes of lower categorical structures.

Vertical categorification has appeared across representation theory, topology, and geometry. One of the most notable examples is the categorification of Jones polynomial, leading to Khovanov homology \cite{MR1740682}. Vertical decategorification has also been widely studied, for example, trace decategorification \cite{MR3319701} \cite{MR3448186}, the decategorification of bordered Khovanov homology \cite{MR3312621}, the decategorification of bordered knot Floer homology \cite{MR3900777} and the decategorification of sutured Floer homology \cite{MR2805998}.

On the other hand, horizontal categorification, or oidification, is typically understood as the process of replacing one-object categories by multi-object categories, so that the endomorphsisms on the single object are replaced by the morphisms between different objects \cite{MR2494910} \cite{MR2735769}. For example, categories (resp. groupoids) can be regarded as the horizontal categorification of monoids (resp. groups).

Contrary to vertical categorification, horizontal categorification and decategorification have received relatively little attention and, to the best of the authors' knowledge, still lack a precise definition. In this paper, we introduce \emph{concentration structures} on categories, which are equivalence relations on morphisms satisfying certain axioms (Definition \ref{def: concentration structure}). Such a structure on a category has a naturally associated monoid, which can be defined as a horizontal decategorification of the original category.

In the following subsections, we will give a brief overview of concentration structures and some of the applications.

\subsection{Concentration structures and concentration preserving functors}

Let $\mathcal{C}$ be a small category. The sets of objects and morphisms of $\mathcal{C}$ are denoted as $\textup{Ob}_{\mathcal{C}}$ and $\textup{Mor}_{\mathcal{C}}$.

\begin{definition} \label{def: concentration structure}
     A \textbf{concentration structure} on $\mathcal{C}$ is an equivalence relation $\sim$ on $\textup{Mor}_{\mathcal{C}}$, which satisfies the following axioms:
    
    \begin{enumerate}
        \item $id_A \sim id_B$ for any objects $A, B \in \textup{Ob}_{\mathcal{C}}$. (Identity axiom)
        
        \item If $f \sim f', g \sim g'$ and both $f\circ g, f' \circ g'$ exist, then $f \circ g \sim f' \circ g'$. (Composition axiom)

\item For any morphisms $f, g \in \textup{Mor}_{\mathcal{C}}$, there exist some $f' \sim f$ and $ g' \sim g$ such that $f' \circ g'$ exists. (2-Existence axiom)
        \item  For any morphisms $f\sim f', g\sim g', h\sim h', m\sim f\circ g, n \sim g'\circ h'$ we have $f' \circ n \sim m \circ h$ whenever all the compositions are  well defined. (Associativity axiom)
    \end{enumerate}
   
   If $\sim$ is a concentration structure on $\C$, we will say $(\C,\sim)$ is a \textbf{category with concentration}.
\end{definition}



Any category has a \textbf{trivial concentration structure}, such that $f \sim_{tr} g$ for any morphisms $f$ and $g$. If a category has only one object, then it admits a \textbf{discrete concentration structure}, i.e. $f \sim_{dis} g$ if and only if $f = g$. For nontrivial and non-discrete examples of concentration structure, see Example \ref{ex: Z/2 and Z/4 category with concentration} and Section \ref{subsec: mroe examples}.

To construct concentration structures from existing ones, we consider the following question: given a functor $F:\C \to \D$ and a concentration structure $\sim_\D$ on $\D$, can $\sim_\D$ be pulled back to $\C$ along $F$? The answer is in general negative, but it is affirmative when $F$ is a so-called $2$-lifting functor (Definition \ref{def: $2$-lifting functor}), which include, for example, surjective Grothendieck fibrations. Moreover, Theorem \ref{thm: Every concentration structure is pullback} shows that every concentration structure arises as a pullback of a discrete concentration structure. 

\vspace{5mm}

To work with categories equipped with concentration, we require functors that preserve the concentration structures. This leads to the natural definition of concentration preserving functors.

\begin{definition} \label{def: concentration preserving functor}
    We say a map $F: (\C,\sim_{\C}) \rightarrow(\D,\sim_{\D}) $ is a \textbf{concentration preserving functor} if the following holds 
    
    \begin{itemize}
        \item $F$ is a functor between $\C$ and $\D$ in the usual sense.
        \item For any $f,f'\in \textup{Mor}_{\C}$, if $f\sim_{\C}f'$ then $F(f) \sim_{\D}F(f')$. 
    \end{itemize}

    $F$ is called a \textbf{concentration isomorphism} if $F$ is a strongly invertible functor ($F$ has a strong inverse $F^{-1}$ such that $F^{-1} \circ F = id_{\mathcal{C}}$ and $F \circ F^{-1} = id_{\mathcal{D}}$), and $F, F^{-1}$ are both concentration preserving. We say $(\C,\sim_{\C})$ and $(\D,\sim_{\D})$ are isomorphic, denoted as $(\C,\sim_{\C}) \cong (\D,\sim_{\D})$, if there exists a concentration isomorphism between them.
\end{definition}

\subsection{Concentration monoids}

Our definition of concentration structures is similar to the generalized congruence relation on categories \cite{MR1725510GeneralizedCongruences}, but we consider morphisms instead of sequences of morphisms, and we impose additional axioms. These extra axioms guarantee that any category with concentration determines a concentration monoid.

\begin{definition}
    Given $(\mathcal{C},\sim)$, a category with concentration, define the \textbf{concentration monoid} of $(\mathcal{C}, \sim)$ to be $M_{(\mathcal{C}, \sim)} = \textup{Mor}_{\mathcal{C}} / \sim$, with multiplication 
    $$
    [f][g] = [f' \circ g']
    $$
    where $f' \sim f, g' \sim g$ and $f' \circ g'$ exists. The third axiom of concentration structure guarantees that we can always find such $f'$ and $g'$.
\end{definition}

Let $\mathscr{C}at_{\sim}$ (resp. $\mathscr{G}rpd_{\sim}$) be the category of small categories (resp. the category of groupoids) with concentration structures whose morphisms are concentration preserving functors. Denote the category of monoids (resp. the category of groups) to be $\mathscr{M}on$ (resp. $\mathscr{G}rp$). The next theorem tells us that taking concentration monoid is functorially well-behaved.

\begin{theorem} \label{thm: Taking concentration monoid is a functor}
    Taking concentration monoid is a functor $\mathbf{M}: \mathscr{C}at_{\sim} \to \mathscr{M}on$. When restricted on $\mathscr{G}rpd_{\sim}$, it is a functor $\mathbf{M}: \mathscr{G}rpd_{\sim} \to \mathscr{G}rp$.
\end{theorem}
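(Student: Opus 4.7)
The plan is to verify in order that $M_{(\C,\sim)}$ is a monoid, that a concentration preserving functor induces a monoid homomorphism on concentration monoids, and that this assignment is functorial; I then specialize to groupoids.

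For the monoid structure on $M_{(\C,\sim)}$, I would first check that the product $[f][g] := [f'\circ g']$ is well-defined: if $f''\sim f$ and $g''\sim g$ is another pair of composable representatives, then $f'\circ g'\sim f''\circ g''$ is exactly the composition axiom. The identity axiom collapses all identities into a single class $e := [\text{id}_A]$ independent of $A$, and the unit laws $[f]\cdot e = e\cdot [f] = [f]$ follow by choosing the representatives $f$ together with $\text{id}_{\text{dom}(f)}$ or $\text{id}_{\text{cod}(f)}$. The substantive step is associativity. Given $[f],[g],[h]$, I would invoke the 2-existence axiom to choose $f_1\sim f, g_1\sim g$ with $f_1\circ g_1$ defined, and $g_2\sim g, h_2\sim h$ with $g_2\circ h_2$ defined, and then again to choose $m\sim f_1\circ g_1$ and $h_1\sim h$ with $m\circ h_1$ defined, and $f_2\sim f$ and $n\sim g_2\circ h_2$ with $f_2\circ n$ defined. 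By construction $([f][g])[h] = [m\circ h_1]$ and $[f]([g][h]) = [f_2\circ n]$, and setting the variables $(f,f',g,g',h,h',m,n)$ of the associativity axiom to $(f_1,f_2,g_1,g_2,h_1,h_2,m,n)$ respectively, the axiom outputs precisely $f_2\circ n\sim m\circ h_1$.

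For the action of $\mathbf{M}$ on a concentration preserving functor $F:(\C,\sim_{\C})\to(\D,\sim_{\D})$, I would set $\mathbf{M}(F)([f]):=[F(f)]$. Well-definedness is precisely the concentration-preserving hypothesis on $F$, while the homomorphism property on products and on the identity class is inherited from the fact that $F$ is a functor in the usual sense and preserves composition and identities; one reads off $\mathbf{M}(F)([f][g]) = [F(f'\circ g')] = [F(f')\circ F(g')] = \mathbf{M}(F)([f])\mathbf{M}(F)([g])$. Functoriality $\mathbf{M}(\text{id}_{\C}) = \text{id}_{M_{(\C,\sim)}}$ and $\mathbf{M}(G\circ F) = \mathbf{M}(G)\circ\mathbf{M}(F)$ is then immediate from the definitions.

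Finally, for the restriction to $\mathscr{G}rpd_{\sim}$ it suffices to upgrade the monoid structure to a group structure. When $\C$ is a groupoid, every $f\in\Mor_{\C}$ admits a two-sided inverse $f^{-1}$ in $\C$, hence $[f][f^{-1}] = [f^{-1}][f] = e$ and $[f^{-1}]$ is a two-sided inverse of $[f]$ in $M_{(\C,\sim)}$. Concentration preserving functors between groupoids are automatically sent to group homomorphisms since any monoid homomorphism between groups preserves inverses. I expect the only real difficulty in this proof is the associativity step: the asymmetric statement of axiom (4) must be matched carefully against the two natural bracketings of three concentration classes, and one must use the 2-existence axiom several times to ensure that all compositions appearing in the hypotheses of axiom (4) are in fact well defined. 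The remaining verifications are essentially formal bookkeeping.
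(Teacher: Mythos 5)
Your proposal is correct and follows essentially the same route as the paper: well-definedness of the product via the composition axiom, units from the identity axiom, associativity by two further applications of the 2-existence axiom to produce composable representatives $m\circ h_1$ and $f_2\circ n$ feeding into the associativity axiom, and then the formal verification of functoriality. The paper's groupoid case is stated slightly more generally (it suffices that every morphism be equivalent to \emph{some} isomorphism), but your direct argument via $[f][f^{-1}]=e$ covers the groupoid statement of the theorem.
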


We will give the precise definition of $\mathbf{M}$, verify its well-definedness in Proposition \ref{prop: G is a monoid} and Proposition \ref{prop: concentration preserving functor induces monoid homomorphism}, before proving Theorem \ref{thm: Taking concentration monoid is a functor} in Section \ref{sec: well-definedness and functoriality of taking concentration monoid}. 

\vspace{5mm}

By considering the discrete concentration structure on the canonical one-object category associated with a concentration monoid, we obtain the following theorem, which asserts that every concentration arises as a pullback.

\begin{theorem} \label{thm: Every concentration structure is pullback}
    Every concentration structure is a pullback of a discrete concentration structure.
\end{theorem}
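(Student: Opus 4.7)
The plan is to realize the identity on $\textup{Mor}_{\C}$ as the pullback along the quotient functor to the one-object category associated with the concentration monoid. Given $(\C,\sim)$, let $M = M_{(\C,\sim)}$ be its concentration monoid, and let $BM$ denote the canonical one-object category whose morphism monoid is $M$. I will define a functor
\[
F \colon \C \longrightarrow BM
\]
sending every object of $\C$ to the unique object $\ast$ of $BM$ and every morphism $f \in \textup{Mor}_{\C}$ to its equivalence class $[f] \in M$. Functoriality of $F$ is immediate: identities are sent to the identity of $M$ by the Identity axiom, and composition is preserved by the definition of multiplication in $M$ (well-definedness of which is part of the statement of Theorem \ref{thm: Taking concentration monoid is a functor}).

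Next, equip $BM$ with its discrete concentration structure $\sim_{dis}$. Pulling $\sim_{dis}$ back along $F$ yields the relation on $\textup{Mor}_{\C}$ given by $f \sim_{F^{*}dis} g \iff F(f) = F(g) \iff [f] = [g]$, which is by definition exactly $\sim$. Thus the content of the theorem reduces to checking that $F$ is a $2$-lifting functor, so that the pullback relation in fact makes sense and is a concentration structure.

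For the $2$-lifting property, the key observation is that the four axioms of Definition \ref{def: concentration structure} are precisely what is needed to lift the trivially composable pairs of morphisms in $BM$ to composable morphisms in $\C$. Concretely, any two morphisms $[f],[g] \in M$ are composable in $BM$, and the $2$-Existence axiom provides representatives $f' \sim f$, $g' \sim g$ with $f'\circ g'$ defined in $\C$; this is exactly the lifting condition required. The Composition and Associativity axioms ensure that such lifts interact coherently with composition, so that $F$ satisfies the hypotheses of Definition \ref{def: $2$-lifting functor}.

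The main obstacle I anticipate is purely bookkeeping: matching the $2$-lifting conditions as stated in Definition \ref{def: $2$-lifting functor} against the four axioms of a concentration structure. Since $BM$ has a single object every pair of morphisms is composable, so the lifting condition only needs to be checked for pairs, which is exactly the content of the $2$-Existence axiom; the Associativity axiom then takes care of any higher coherence that may appear in the definition. Once $F$ is verified to be $2$-lifting, the identification $\sim \,=\, F^{*}\sim_{dis}$ completes the proof.
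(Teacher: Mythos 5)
Your proposal is correct and follows essentially the same route as the paper: construct the concentrating functor $F_{(\C,\sim)}\colon \C \to \M_{(\C,\sim)}$ sending $f$ to $[f]$, verify it is $2$-lifting via the $2$-Existence axiom (Lemma \ref{lem: concentrating functor is 2-lifting}), and observe that the pullback of $\sim_{dis}$ along it recovers $\sim$. The only cosmetic difference is that you invoke the Composition and Associativity axioms in the $2$-lifting check, where in fact only the $2$-Existence axiom is needed.
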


In general, any monoid can be regarded as a one-object category with discrete concentration structure, which can be viewed as a functor $\mathbf{C_{\sim}}$ from $\mathscr{M}on$ to $\mathscr{C}at_{\sim} $ (see Section \ref{subsec: Concentration monoid and adjoint functors} for the precise description). It turns out that  $\mathbf{C_\sim}$ and the concentration monoid functor $\mathbf{M}$ form an adjoint pair.   

\begin{theorem} \label{thm: M is left adjoint to C}
    $\mathbf{M}$ is left adjoint to $\mathbf{C_\sim}$.
\end{theorem}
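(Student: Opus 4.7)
The plan is to construct a natural bijection
\[
\Phi_{(\mathcal{C},\sim),N} : \textup{Hom}_{\mathscr{M}on}\bigl(\mathbf{M}(\mathcal{C},\sim),\, N\bigr) \;\longleftrightarrow\; \textup{Hom}_{\mathscr{C}at_{\sim}}\bigl((\mathcal{C},\sim),\, \mathbf{C_\sim}(N)\bigr)
\]
for every category with concentration $(\mathcal{C},\sim)$ and every monoid $N$, and then verify naturality in both arguments.

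First, I would write down the bijection in both directions. Given a monoid homomorphism $\phi : M_{(\mathcal{C},\sim)} \to N$, define $\Phi(\phi): (\mathcal{C},\sim) \to \mathbf{C_\sim}(N)$ by sending every object of $\mathcal{C}$ to the unique object of $\mathbf{C_\sim}(N)$ and every morphism $f$ to $\phi([f])$. Conversely, given a concentration preserving functor $F : (\mathcal{C},\sim) \to \mathbf{C_\sim}(N)$, set $\Psi(F)([f]) := F(f)$.

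Second, I would check well-definedness of both recipes. For $\Phi(\phi)$: composition is preserved because $\phi([f \circ g]) = \phi([f][g]) = \phi([f])\phi([g])$ by the definition of multiplication on $M_{(\mathcal{C},\sim)}$; identities are preserved because the identity axiom together with Proposition \ref{prop: G is a monoid} forces $[id_A]$ to be the two-sided unit of $M_{(\mathcal{C},\sim)}$, so $\phi([id_A]) = 1_N$; and $\Phi(\phi)$ is automatically concentration preserving because the target carries the discrete structure and $f \sim f'$ implies $[f] = [f']$. For $\Psi(F)$: well-definedness on equivalence classes is exactly the statement that $F$ is concentration preserving into a discrete target, so $f \sim f'$ forces $F(f)=F(f')$, and the monoid homomorphism property is the functoriality of $F$ applied to a compatible pair of representatives furnished by the 2-existence axiom.

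Third, I would verify that $\Phi$ and $\Psi$ are mutually inverse by the direct computations $\Psi(\Phi(\phi))([f]) = \Phi(\phi)(f) = \phi([f])$ and $\Phi(\Psi(F))(f) = \Psi(F)([f]) = F(f)$. Naturality in $N$ is immediate from post-composition with a monoid homomorphism $N \to N'$; naturality in $(\mathcal{C},\sim)$ follows by observing that any concentration preserving functor $G:(\mathcal{D},\sim') \to (\mathcal{C},\sim)$ commutes with the passage to equivalence classes (using the composition axiom), so precomposition with $G$ on the right-hand side matches precomposition with $\mathbf{M}(G)$ on the left.

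The only subtle step—and the one I would guard most carefully—is confirming that $[id_A]$ is the multiplicative identity of $M_{(\mathcal{C},\sim)}$ rather than merely an idempotent, for otherwise $\Phi(\phi)$ would not preserve identities. This is exactly where the 2-existence and identity axioms combine: to compute $[id_A][f]$ for $f:X\to Y$ one rechooses the representative of $[id_A]$ to be $id_Y$, which is $\sim$-equivalent to $id_A$, so that $id_Y\circ f$ is defined and equals $f$. This is presumably already part of the content of Proposition \ref{prop: G is a monoid}; once it is in hand, all of the remaining verifications are formal.
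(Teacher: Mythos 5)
Your proof is correct, but it establishes the adjunction via the hom-set formulation, whereas the paper uses the unit--counit formulation. Concretely, the paper defines the unit $\eta_{(\mathcal{C},\sim)}$ to be the concentrating functor $F_{(\mathcal{C},\sim)}:(\mathcal{C},\sim)\to(\mathcal{M}_{(\mathcal{C},\sim)},\sim_{dis})$, takes the counit $\epsilon_M$ to be the identity (after identifying $M$ with $M_{(\mathcal{M},\sim_{dis})}$), and checks the two triangle identities, which become essentially trivial because $\epsilon$ is the identity. Your bijection $\Phi$ is exactly the map $\phi\mapsto \mathbf{C_\sim}(\phi)\circ\eta_{(\mathcal{C},\sim)}$ that one would extract from the paper's unit, so the mathematical content coincides: both arguments rest on the observation that the quotient functor $f\mapsto[f]$ is the universal concentration preserving functor into a one-object category with discrete concentration. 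What your route buys is that the universal property is stated explicitly and the verifications (well-definedness of $\Psi(F)$ from discreteness of the target, the identity $[f\circ g]=[f][g]$ from Proposition \ref{prop: G is a monoid}, naturality in both variables) are all localized in one bijection; what the paper's route buys is shorter bookkeeping, since the counit being the identity collapses both triangle identities to the single statement that $\mathbf{M}(\eta_{(\mathcal{C},\sim)})$ and $\eta_{\mathbf{C_\sim}(M)}$ are identities. Your flagged subtle point, that $[id_A]$ is a genuine two-sided unit rather than merely an idempotent, is indeed already settled in Proposition \ref{prop: G is a monoid}, so no gap remains.
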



We next investigate how $\mathbf{M}$ behaves with respect to standard algebraic constructions. In analogy with the familiar sub-, quotient-, and semidirect-product operations for monoids, we define corresponding notions for categories with concentration. The following theorem shows that these constructions are preserved by $\mathbf{M}$, as expected.


\begin{theorem} \label{thm: concentration restrict to sub,quotient, and semidirect product}
    The functor $\mathbf{M}$ preserves sub-, quotient-, and semidirect product structures from categories with concentration to monoids.
\end{theorem}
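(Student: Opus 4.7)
The plan is to handle the three structures in sequence, in each case first fixing the natural definition of a sub-, quotient-, or semidirect product structure on a category with concentration (as the evident categorical analog of the corresponding monoid-theoretic construction), and then verifying that the induced map on concentration monoids realizes the expected structure. Throughout, Theorem \ref{thm: Taking concentration monoid is a functor} automatically produces the needed monoid homomorphisms; the content is in identifying their images and kernels.

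For the \emph{sub-case}, a sub-category-with-concentration $(\C', \sim')$ of $(\C, \sim)$ should be a subcategory $\C' \subseteq \C$ such that the restriction $\sim' = \sim|_{\textup{Mor}_{\C'}}$ remains a concentration structure; the delicate axiom here is 2-existence, which forces $\C'$ to be closed under choosing composable representatives (for instance, $\C'$ could be required to be closed under $\sim$). Granting this, the inclusion $\iota$ is tautologically concentration-preserving and the induced map $\mathbf{M}(\iota): M_{(\C',\sim')}\to M_{(\C,\sim)}$ is injective, because $f \sim g$ for $f,g\in\textup{Mor}_{\C'}$ is by definition the same as $f\sim' g$. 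For the \emph{quotient case}, a quotient of $(\C, \sim)$ should correspond to a coarser concentration structure $\sim'\supseteq \sim$ on $\textup{Mor}_\C$; the identity-on-objects functor $(\C, \sim)\to(\C, \sim')$ is concentration-preserving and the induced map $\mathbf{M}(\C, \sim)\to\mathbf{M}(\C, \sim')$ is surjective with the expected kernel congruence. One can also argue this conceptually from Theorem \ref{thm: M is left adjoint to C}: since $\mathbf{M}$ is a left adjoint it preserves colimits, and in particular sends coequalizers in $\mathscr{C}at_{\sim}$ (the quotients) to coequalizers in $\mathscr{M}on$.

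For the \emph{semidirect product}, one first sets up a suitable notion of an action of $(\C, \sim_\C)$ on $(\D, \sim_\D)$ and forms the twisted category $\D\rtimes\C$, equipped with the product-type relation built from $\sim_\C$ and $\sim_\D$. The principal technical step, and the main obstacle, is verifying that this relation actually satisfies the four axioms of a concentration structure; the identity, composition, and 2-existence axioms hold componentwise, but associativity has to be transported across the twist supplied by the action, giving a six-variable compatibility check that will use the associativity axiom of each factor together with compatibility of the action with $\sim_\C$ and $\sim_\D$. Once the concentration structure on the semidirect product is in place, a direct calculation on representatives yields the multiplication formula $[(d, c)]\cdot [(d', c')] = [(d\cdot c(d'),\, c c')]$, which exhibits the canonical isomorphism $\mathbf{M}\bigl((\D, \sim_\D)\rtimes (\C, \sim_\C)\bigr)\cong \mathbf{M}(\D, \sim_\D)\rtimes \mathbf{M}(\C, \sim_\C)$ and completes the theorem.
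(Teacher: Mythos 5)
Your overall plan coincides with the paper's: the theorem is proved by treating the three constructions separately, in each case defining the categorical analogue, checking that the relevant equivalence relation is a concentration structure, and then computing the induced map on concentration monoids (Propositions \ref{prop: stabilizatin of sub concentration is submonoid}, \ref{prop: concentration monoid of quotient concentration is the quotient of monoid} and \ref{prop: concentration of semidirect product is semidirect product of concentration}). Your treatment of the sub-case, including the observation that $2$-existence is the delicate axiom and that closure of $\C'$ under $\sim$ suffices, matches the paper exactly, and your multiplication formula for the semidirect product is the one the paper verifies.

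Two points diverge in ways worth flagging. First, for quotients the paper does not work with an arbitrary coarser concentration structure: it defines a \emph{normal} sub-concentration $(\mathcal{B},\sim_{|_{\mathcal{B}}})$, builds the quotient concentration $f\sim_{/\mathcal{B}}g$ iff $[h_1][f]=[g][h_2]$ for some $h_1,h_2\in\textup{Mor}_{\mathcal{B}}$, and proves $M_{(\C,\sim_{/\mathcal{B}})}\cong M_{(\C,\sim)}/M_{(\mathcal{B},\sim_{|_{\mathcal{B}}})}$, where the monoid quotient is by the congruence $a\,\mathcal{R}_S\,b\iff s_1a=bs_2$. Your version identifies "quotient" with "coarser relation," which is more general but does not by itself exhibit the quotient as a quotient by a normal submonoid, which is what the theorem asserts; and the appeal to $\mathbf{M}$ preserving colimits requires first showing that these quotients are coequalizers in $\mathscr{C}at_\sim$, which you do not establish. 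Second, for the semidirect product you have the technical difficulty located in the wrong axiom. The $2$-existence axiom does \emph{not} hold componentwise: composability of $(\alpha_2,f_2)\circ(\alpha_1,f_1)$ requires $\alpha_2\in\textup{Mor}(\Phi_{f_2}(C_2),C_3)$, so the choice of composable representatives in the $\C$-factor is twisted by the action. This is exactly why the paper takes $\Phi:\D\to\mathcal{A}ut(\C)$ (automorphisms rather than endofunctors): its verification of Axiom 3 picks $\alpha''\sim_\C\Phi_{f'}^{-1}(\alpha)$ and then sets $\alpha'=\Phi_{f'}(\alpha'')$. By contrast, the associativity axiom is the easy one there, following directly from the componentwise definition of $\sim_{\rtimes_\Phi}$. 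If you attempted the componentwise argument for $2$-existence you would get stuck, and you would also miss the need for invertibility of the action.
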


As stated above, Theorem \ref{thm: concentration restrict to sub,quotient, and semidirect product} contains three cases: sub-concentrations, quotient concentrations, and semidirect product. They correspond respectively to Proposition \ref{prop: stabilizatin of sub concentration is submonoid}, Proposition \ref{prop: concentration monoid of quotient concentration is the quotient of monoid}, and Proposition \ref{prop: concentration of semidirect product is semidirect product of concentration}.

\subsection{Horizontal categorification and decategorification}

With concentration structures and concentration monoids, We define horizontal categorification as follows.

\begin{definition}[\textbf{Horizontal categorification}]
    We say a category $\C$ is an (internal) horizontal categorification of a monoid $M$ if there exists a concentration structure $\sim$ on $\C$ such that $M_{(\C, \sim)} \cong M$.
\end{definition}

A more intuitive definition of horizontal decategorification comes from the definition of $2$-lifting functor, which guarantees that the source category contains the entire $2$-composition structure in the target category.

\begin{definition} \label{def: $2$-lifting functor}
    A functor $F: \mathcal{C} \to \mathcal{D}$  is a \textbf{$2$-lifting functor} if for any morphisms $g_1, g_2$ in $\mathcal{D}$ such that $g_1 \circ g_2 $ exists, there exist some morphisms $f_1, f_2$ in $\mathcal{C}$, satisfying $f_1 \circ f_2 $ exists and $F(f_1) = g_1, F(f_2) = g_2$.
\end{definition}
Given a monoid $M$, let $\M$ be the canonical category consisting of a single object $*$ and $\textup{Mor}(*, *) = M$, where the composition of morphisms is given by the multiplication in $M$.
\begin{definition}[\textbf{External horizontal categorification}]
    We say a category $\C$ is an external horizontal categorification of a monoid $M$ if there exists a $2$-lifting functor from $\C$ to $\M$.
\end{definition}

\begin{theorem} \label{thm: internal and external categorifications are equivalent}
    The above two definitions of horizontal categorification are equivalent.
\end{theorem}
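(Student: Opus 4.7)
The plan is to prove both implications by explicit construction, largely exploiting the fact that the 2-Existence axiom and the $2$-lifting condition are designed to match each other.

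For the forward direction, suppose $(\C, \sim)$ is an internal horizontal categorification of $M$ with monoid isomorphism $\varphi: M_{(\C,\sim)} \xrightarrow{\sim} M$. I would define $F: \C \to \M$ by sending every object of $\C$ to the unique object $*$ of $\M$ and every morphism $f$ to $\varphi([f])$. This is a functor: the composition axiom for $\sim$ makes the quotient map $\textup{Mor}_{\C} \to M_{(\C,\sim)}$ multiplicative on composable pairs, and $\varphi$ is multiplicative. To verify that $F$ is $2$-lifting, given $g_1, g_2 \in M$ (any two morphisms in $\M$ are composable), pick representatives $f_i \in \textup{Mor}_{\C}$ with $\varphi([f_i]) = g_i$, and apply the 2-Existence axiom to obtain $f_1' \sim f_1$ and $f_2' \sim f_2$ with $f_1' \circ f_2'$ defined. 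Then $F(f_i') = \varphi([f_i']) = \varphi([f_i]) = g_i$, producing the required lift.

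For the reverse direction, let $F: \C \to \M$ be a $2$-lifting functor, and define $\sim$ on $\textup{Mor}_{\C}$ by $f \sim f' \Leftrightarrow F(f) = F(f')$, the pullback of the discrete concentration structure on $\M$ along $F$. The identity axiom holds because $F(id_A) = id_* = F(id_B)$ for any objects $A, B$; the composition axiom is immediate from functoriality of $F$; the 2-Existence axiom is exactly the $2$-lifting hypothesis applied to the pair $F(f), F(g)$ (whose composition in $\M$ is automatic); and the associativity axiom collapses, after applying $F$ and using $F(h) = F(h')$, to the trivial identity $F(f)F(g)F(h) = F(f)F(g)F(h)$. I would then define $\Phi: M_{(\C,\sim)} \to M$ by $[f] \mapsto F(f)$; this is well-defined and injective by the definition of $\sim$, multiplicative by functoriality, and surjective since any $g \in M$ admits a composable factorization $g = g \circ id_*$ and hence a $2$-lift producing a preimage in $\C$. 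Therefore $M_{(\C,\sim)} \cong M$ as monoids.

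The main potential obstacle is verifying the associativity axiom for the pullback structure, which nominally involves six interacting equivalences; however, once one applies $F$ both sides collapse to the same product $F(f)F(g)F(h)$ in $M$, and the check becomes routine. The real content is the tight match between the 2-Existence axiom in Definition \ref{def: concentration structure} and the $2$-lifting condition in Definition \ref{def: $2$-lifting functor}, which by design makes neither direction of the equivalence require any nontrivial construction beyond pulling back or quotienting.
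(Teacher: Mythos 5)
Your proposal is correct and follows essentially the same route as the paper: the forward direction is the concentrating functor composed with the isomorphism (whose $2$-lifting property the paper isolates as Lemma \ref{lem: concentrating functor is 2-lifting}), and the reverse direction is the pullback of the discrete concentration along $F$ together with the induced isomorphism of concentration monoids (the paper's Lemmas \ref{prop: pullback concentration of $2$-lifting functor} and \ref{lem: pullback concentration induce isomorphism on concentration monoids}). You simply inline these general lemmas in the special case of a one-object discrete target, and all your verifications check out.
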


We will prove the above theorem in Section \ref{subsec: decategorification} by considering pullback of $2$-lifting functors. The definition of horizontal decategorification is formulated in terms of horizontal categorification. 

\begin{definition}[\textbf{Horizontal decategorification}]
    We say a monoid $M$ is a horizontal decategorification of a category $\C$ if $\C$ is a horizontal categorification of $M$.
\end{definition}

\subsection{$G$-equivariant direct limits}

Given a directed set $S$, a direct system on $S$ is a functor $F$ from the associated direct category $\mathcal{S}$ to the category of groups $\mathscr{G}rp$ (see Section \ref{sec: Concentrations and $G$-equivariant direct limits}). The colimit of the functor $F$ is also called the direct limit of the direct system $F$, denoted as $\displaystyle\lim_{\longrightarrow} F$.

Using concentration, we are able to reinterpret  $\displaystyle\lim_{\longrightarrow} F$ as the concentration group of a certain groupoid with concentration $(\mathcal{S}_0, \sim)$. It turns out that when $\mathcal{S}$ has a $G$-action on it for some group $G$, and the functor $F$ is $G$-equivariant, we can construct a groupoid with concentration $(\mathcal{S}_G, \sim)$ as a generalization of $(\mathcal{S}_0, \sim)$. 


Thus, the direct limit of $F$ can be naturally generalized to the concentration monoid of $(\mathcal{S}_G, \sim)$, which we call the $G$-equivariant direct limit of $F$, and denote by $\displaystyle \lim_{\longrightarrow}{}^GF$. Moreover, $\displaystyle \lim_{\longrightarrow}{}^GF$ is related to $\displaystyle\lim_{\longrightarrow} F$ by the following theorem. 

\begin{theorem}
    $\displaystyle \lim_{\longrightarrow}{}^GF$ is isomorphic to a semidirect product of $\displaystyle\lim_{\longrightarrow} F$ and $G$.
\end{theorem}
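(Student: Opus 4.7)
The plan is to exhibit $(\mathcal{S}_G, \sim)$ as the semidirect product of $(\mathcal{S}_0, \sim)$ and the one-object category $\mathbf{C_\sim}(G)$ in the sense of categories with concentration, then invoke the semidirect product preservation part of Theorem \ref{thm: concentration restrict to sub,quotient, and semidirect product} (namely Proposition \ref{prop: concentration of semidirect product is semidirect product of concentration}) to read off the desired isomorphism of monoids. A preliminary step is to define the $G$-action on $(\mathcal{S}_0, \sim)$: for each $g \in G$ the action on $\mathcal{S}$ lifts to a functor on the groupoid $\mathcal{S}_0$, and the $G$-equivariance of $F$ ensures this functor preserves $\sim$, giving a concentration preserving automorphism $\phi_g$. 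Applying $\mathbf{M}$ then produces a $G$-action on $\displaystyle\lim_{\longrightarrow} F = M_{(\mathcal{S}_0, \sim)}$, with respect to which the semidirect product $\displaystyle\lim_{\longrightarrow} F \rtimes G$ is formed.

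The technical heart of the argument is to unpack the construction of $\mathcal{S}_G$ from Section \ref{sec: Concentrations and $G$-equivariant direct limits} and produce a concentration isomorphism
\[
(\mathcal{S}_G, \sim) \;\cong\; (\mathcal{S}_0, \sim) \rtimes \mathbf{C_\sim}(G),
\]
where the right hand side is the categorical semidirect product associated with the $G$-action $\{\phi_g\}$ on $(\mathcal{S}_0, \sim)$. This amounts to giving a bijection between morphisms of $\mathcal{S}_G$ and pairs consisting of a morphism of $\mathcal{S}_0$ together with a group element of $G$, and checking that composition, identities, and the equivalence relation $\sim$ translate into the corresponding data on the semidirect product. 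Because Definition \ref{def: concentration structure} has four axioms, this is a careful but purely bookkeeping verification.

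Once this identification is in hand, Proposition \ref{prop: concentration of semidirect product is semidirect product of concentration} yields
\[
\mathbf{M}\bigl((\mathcal{S}_G, \sim)\bigr) \;\cong\; \mathbf{M}\bigl((\mathcal{S}_0, \sim)\bigr) \rtimes \mathbf{M}\bigl(\mathbf{C_\sim}(G)\bigr),
\]
and since $\mathbf{M}(\mathbf{C_\sim}(G)) \cong G$ (a formal consequence of the adjunction of Theorem \ref{thm: M is left adjoint to C} applied to the one-object category with discrete concentration attached to $G$), the right side is $\displaystyle\lim_{\longrightarrow} F \rtimes G$, while the left side is $\displaystyle\lim_{\longrightarrow}{}^G F$ by the very definition of the $G$-equivariant direct limit. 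I expect the main obstacle to lie entirely in the middle step: extracting, from the concrete description of $\mathcal{S}_G$ in Section \ref{sec: Concentrations and $G$-equivariant direct limits}, the precise categorical semidirect product structure compatible with the action $\{\phi_g\}$, and in particular verifying that the equivalence relation on $\mathcal{S}_G$ really factors through the product decomposition. The remaining steps are either definitional or immediate applications of the general preservation theorem.
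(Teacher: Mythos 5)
Your proposal is correct and follows essentially the same route as the paper: the paper proves Theorem \ref{thm: S_G is the semidirect product of S_0 and G}, establishing the concentration isomorphism $(\mathcal{S}_G,\sim)\cong(\mathcal{S}_0,\sim)\rtimes_{\Phi}(\mathcal{G},\sim_{dis})$ via exactly the bijection $(A,\alpha,f)\leftrightarrow((A,\alpha),f)$ you describe, and then deduces Corollary \ref{cor: G-direct limit is the semidirect product of direct limit and G} from Proposition \ref{prop: concentration of semidirect product is semidirect product of concentration}. The only cosmetic difference is that the identification $\mathbf{M}(\mathbf{C_\sim}(G))\cong G$ is read off directly from the definitions rather than via the adjunction of Theorem \ref{thm: M is left adjoint to C}.
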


As an example of $G$-equivariant direct limit, in Section \ref{sec: real braid groups} we construct several versions of ``$\mathbb{R}$-braid group" via group actions on $\mathbb{R}$. In particular, they extend the infinite braid group \cite{MR19087} \cite{MR19088} on the integers to a continuous index set.


\subsection{Concentration structures on fundamental groupoids}

An interesting class of examples of concentration structure arises from the fundamental groupoid $\Pi(X)$ of a path-connected topological space $X$. Moreover, the concentration group of such a concentration structure can recover the fundamental group of $X$.

\begin{theorem}\label{thm: concentration monoid of fundamental groupoid is fundamental group}
    For any path-connected topological space $X$, there exists some concentration structure $\sim$ on the fundamental groupoid $\Pi(X)$, such that the concentration group $M_{(\Pi(X),\sim)}\cong \pi_1(X)$. In other words, $\Pi(X)$ is a horizontal categorification of $\pi_1(X)$.
\end{theorem}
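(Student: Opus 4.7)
The plan is to use a choice of basepoint together with chosen paths to each point to ``transport'' every morphism in $\Pi(X)$ to a loop, and then declare two morphisms equivalent when their transports give the same element of $\pi_1(X,x_0)$.

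First, fix a basepoint $x_0 \in X$, and using path-connectedness pick, for each $x \in X$, a path $\gamma_x$ from $x_0$ to $x$, with $\gamma_{x_0}$ the constant path. For any morphism $\alpha : A \to B$ in $\Pi(X)$ (i.e.\ a homotopy class of paths), define the associated loop class
\[
\widetilde{\alpha} \;=\; [\gamma_B^{-1}] \cdot \alpha \cdot [\gamma_A] \;\in\; \pi_1(X,x_0).
\]
This is well-defined on homotopy classes. Now define the relation on $\mathrm{Mor}_{\Pi(X)}$ by $\alpha \sim \beta$ iff $\widetilde{\alpha} = \widetilde{\beta}$. It is immediate that $\sim$ is an equivalence relation.

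Next I verify the four axioms of Definition \ref{def: concentration structure}. For the identity axiom, $\widetilde{\mathrm{id}_A} = [\gamma_A^{-1} \cdot \gamma_A] = 1 \in \pi_1(X,x_0)$, so all identities are equivalent. For the composition axiom, the key observation is that whenever $\alpha : A \to B$ and $\beta : B \to C$ are composable, the telescoping $[\gamma_B][\gamma_B^{-1}] = 1$ gives $\widetilde{\alpha \circ \beta^{-1}\dots}$, more precisely $\widetilde{\alpha \circ \beta} = \widetilde{\alpha}\cdot\widetilde{\beta}$ after relabeling (with $\alpha$ post-composed on $\beta$); so if $\alpha\sim\alpha'$ and $\beta\sim\beta'$ with both composites defined, $\widetilde{\alpha\circ\beta} = \widetilde{\alpha}\widetilde{\beta} = \widetilde{\alpha'}\widetilde{\beta'} = \widetilde{\alpha'\circ\beta'}$. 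For 2-existence, given $\alpha : A\to B$ and $\beta : C\to D$, take $\beta' = \beta$ and $\alpha' = \gamma_B \cdot \widetilde{\alpha} \cdot \gamma_D^{-1} : D \to B$, which is composable with $\beta$ and satisfies $\widetilde{\alpha'} = \widetilde{\alpha}$. For associativity, with hypotheses as in Definition \ref{def: concentration structure}(4), compute
\[
\widetilde{f'\circ n} \;=\; \widetilde{f'}\cdot\widetilde{n} \;=\; \widetilde{f}\cdot\widetilde{g'}\cdot\widetilde{h'} \;=\; \widetilde{f}\cdot\widetilde{g}\cdot\widetilde{h} \;=\; \widetilde{m}\cdot\widetilde{h} \;=\; \widetilde{m\circ h},
\]
using $\widetilde{m} = \widetilde{f\circ g} = \widetilde{f}\widetilde{g}$ and $\widetilde{n} = \widetilde{g'\circ h'} = \widetilde{g'}\widetilde{h'}$ by the composition axiom already established.

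Finally, the map $\Phi : M_{(\Pi(X),\sim)} \to \pi_1(X,x_0)$, $[\alpha] \mapsto \widetilde{\alpha}$, is well-defined and injective by construction of $\sim$, surjective because any loop $\ell$ at $x_0$ has $\widetilde{\ell} = \ell$, and a monoid homomorphism by the composition identity $\widetilde{\alpha \circ \beta} = \widetilde{\alpha}\widetilde{\beta}$ together with the definition $[\alpha][\beta] = [\alpha'\circ\beta']$ for composable representatives. Thus $M_{(\Pi(X),\sim)} \cong \pi_1(X,x_0)$.

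The only real subtlety I anticipate is bookkeeping in the 2-existence and associativity checks, since one must carefully track the sources and targets of the representatives one chooses; the constructions themselves are forced by the telescoping identity $\gamma_x \gamma_x^{-1} = 1$. The argument depends on the choice of $x_0$ and the paths $\gamma_x$, but different choices yield isomorphic concentration structures since conjugation by a change-of-basepoint path induces an isomorphism $\pi_1(X,x_0) \cong \pi_1(X,x_0')$.
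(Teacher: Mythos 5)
Your proposal is correct and follows essentially the same approach as the paper: the same transport-to-loops equivalence relation via chosen paths $\theta_{x_0}^y$ (your $\gamma_x$) and the same isomorphism $[\alpha]\mapsto(\theta_{x_0}^b)^{-1}\circ\alpha\circ\theta_{x_0}^a$. The only cosmetic difference is that the paper verifies the $3$-existence axiom and invokes Proposition \ref{prop: 3-concentration implies concentration} to get associativity for free, whereas you check the associativity axiom directly from the identity $\widetilde{\alpha\circ\beta}=\widetilde{\alpha}\,\widetilde{\beta}$; both are valid.
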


Combine the above theorem and the pullback along universal covering map, we are able to conclude the following theorem, which demonstrates that the concentration structures can vary significantly under categorical equivalence.

\begin{theorem} \label{thm: any group can be recovered as concentration group of trivial category}
    For any group $G$, there exists some category with concentration $(\C,\sim)$ such that $\C$ is equivalent to the trivial category, and the concentration monoid $M_{(\C,\sim)}$ is isomorphic to $G$.
\end{theorem}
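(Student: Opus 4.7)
The plan is to realize $G$ topologically and then transport the concentration structure through the universal covering.

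First, I would choose a path-connected CW complex $X$ with $\pi_1(X) \cong G$; such an $X$ exists, for instance, as the presentation $2$-complex of any presentation of $G$, or as an Eilenberg--MacLane space $K(G,1)$. Being a CW complex, $X$ is semilocally simply connected and admits a universal cover $p\colon \tilde X \to X$ with $\tilde X$ path-connected and simply connected. Applying Theorem~\ref{thm: concentration monoid of fundamental groupoid is fundamental group} to $X$ yields a concentration structure $\sim$ on $\Pi(X)$ satisfying $M_{(\Pi(X),\sim)} \cong G$.

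Second, I would pull $\sim$ back along the induced functor $\tilde p\colon \Pi(\tilde X) \to \Pi(X)$. The crucial point is that $\tilde p$ is a $2$-lifting functor in the sense of Definition~\ref{def: $2$-lifting functor}: given composable morphisms $[\gamma_1]\circ[\gamma_2]$ in $\Pi(X)$ meeting at a point $x \in X$, choose any $\tilde x \in p^{-1}(x)$, uniquely lift $\gamma_2$ so that it ends at $\tilde x$ (for example by lifting its reverse from $\tilde x$ and reversing), and then uniquely lift $\gamma_1$ starting at $\tilde x$. These lifts are composable in $\Pi(\tilde X)$ and project to the prescribed pair. The discussion preceding Theorem~\ref{thm: Every concentration structure is pullback} then produces a concentration structure $\tilde p^{\ast}\sim$ on $\Pi(\tilde X)$.

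Third, I would verify both conclusions for $(\Pi(\tilde X),\tilde p^{\ast}\sim)$. Since $\tilde X$ is simply connected, every hom-set of $\Pi(\tilde X)$ is a singleton, so $\Pi(\tilde X)$ is the codiscrete groupoid on $\tilde X$ and, in particular, equivalent to the trivial category. By Theorem~\ref{thm: Taking concentration monoid is a functor}, $\tilde p$ induces a monoid homomorphism $\mathbf{M}(\tilde p)\colon M_{(\Pi(\tilde X),\tilde p^{\ast}\sim)} \to M_{(\Pi(X),\sim)} \cong G$. Injectivity is immediate from the definition of the pullback equivalence relation; surjectivity follows from the path-lifting argument above, which shows that $\tilde p$ is surjective on morphisms. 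Hence $\mathbf{M}(\tilde p)$ is an isomorphism, and $(\Pi(\tilde X),\tilde p^{\ast}\sim)$ is the desired category with concentration.

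The main obstacle I anticipate is the geometric input in the second step: one must confirm that $\tilde p$ is genuinely a $2$-lifting functor (so that the pullback is a bona fide concentration structure and not merely an equivalence relation), which depends on the fact that one may prescribe the junction point in $p^{-1}(x)$ of composable lifts, not only their initial point. Once this geometric check is secured, functoriality of $\mathbf{M}$ together with the simple connectivity of $\tilde X$ handles the remainder.
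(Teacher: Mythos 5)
Your proposal is correct and follows essentially the same route as the paper: realize $G$ as $\pi_1(X)$, equip $\Pi(X)$ with the concentration structure from Theorem~\ref{thm: concentration monoid of fundamental groupoid is fundamental group}, and pull it back along the universal covering to the codiscrete groupoid $\Pi(\tilde X)$. The only differences are that you verify the $2$-lifting property of $\Pi(p)$ and the resulting isomorphism of concentration monoids by hand via path lifting, where the paper instead invokes Proposition~\ref{prop: fibration induces $2$-lifting functor} and Lemma~\ref{lem: pullback concentration induce isomorphism on concentration monoids}, and you are slightly more careful in choosing $X$ to be a CW complex so that the universal cover is guaranteed to exist.
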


\subsection{Organization}

The present paper is organized as follows. In Section \ref{sec: general definitions} we develop the general theory of categories with concentration, including pullbacks, concentration monoids, horizontal categorification, and sub- and quotient concentrations. Section \ref{sec: Concentrations and semidirect products} introduces the semidirect products of categories with concentration. In Section \ref{sec: Concentrations and $G$-equivariant direct limits} we define the $G$-equivariant direct limit and study its relationship with semidirect products. Finally, Section \ref{sec: concentrations fundamental groupoids and fibrations} investigates concentration structures on fundamental groupoids and discusses their applications.

\subsection*{Acknowledgement}
The authors would like to thank Slava Krushkal and Brandon Shapiro for useful suggestions. 
Yangxiao Luo was supported in part by NSF grant DMS-2105467 to Slava Krushkal. 


\section{General properties of categories with concentration}\label{sec: general definitions}

In this section we develop the basic formalism of concentration structures on categories. We begin by proving the well-definedness and functoriality of the concentration monoid construction. We then introduce the pullback of concentration structures along 
$2$-lifting functors and establish its basic properties. In Section \ref{subsec: Concentration monoid and adjoint functors} and \ref{subsec: sub and quotient concentrations}, we turn to a closer study of the concentration monoid functor, including the prove of Theorem \ref{thm: M is left adjoint to C} and part of Theorem \ref{thm: concentration restrict to sub,quotient, and semidirect product}. These results lay the groundwork for the constructions and applications developed in the later sections.

\subsection{Well-definedness and functoriality of taking concentration monoid}
\label{sec: well-definedness and functoriality of taking concentration monoid}

Recall that for a category with concentration $(C,\sim)$, the concentration monoid $M_{(C,\sim)}$ is defined as the quotient $\mathrm{Mor}_{\C} / \sim$ with multiplication
$$
[f]\,[g] \;=\; [\,f' \circ g'\,]
$$
where $f' \sim f$ and $g' \sim g$ are chosen so that $f' \circ g'$ exists.

In the next proposition we will verify that this operation is well defined and $M_{(C,\sim)}$ is indeed a monoid, which is the first step towards the functoriality of taking concentration monoid (Theorem \ref{thm: Taking concentration monoid is a functor}).

\begin{proposition} 
\label{prop: G is a monoid}
    The multiplication on $M_{(\mathcal{C}, \sim)}$ is well defined, and $M_{(\mathcal{C}, \sim)}$ is a monoid. Moreover, if any morphism in $\mathcal{C}$ is equivalent to some isomorphism, then $M_{(\mathcal{C}, \sim)}$ is a group, called the concentration group of $(\mathcal{C}, \sim)$. In particular, $M_{(\mathcal{C}, \sim)}$ is a group whenever $\C$ is a groupoid.
\end{proposition}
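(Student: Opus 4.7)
The plan is to verify in turn that the multiplication is well-defined, that $M_{(\C,\sim)}$ satisfies the unit and associativity axioms, and finally that equivalence to isomorphisms upgrades the monoid to a group.

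For well-definedness, the $2$-Existence axiom produces representatives $f'\sim f$ and $g'\sim g$ with $f'\circ g'$ defined, so $[f][g] := [f'\circ g']$ is at least a well-formed expression. If $(f_1,g_1)$ and $(f_2,g_2)$ are two such pairs, then $f_1\sim f_2$ and $g_1\sim g_2$ by transitivity of $\sim$, and the Composition axiom immediately gives $f_1\circ g_1 \sim f_2\circ g_2$; hence the product is independent of the chosen representatives.

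The unit axiom is the easy part of the monoid structure: the Identity axiom collapses all $\mathrm{id}_A$ to a single class $e\in M_{(\C,\sim)}$, and for any $f\colon A\to B$ the genuine equalities $\mathrm{id}_B\circ f = f = f\circ \mathrm{id}_A$ in $\C$ yield $e[f] = [f] = [f]e$. Associativity is the main point, and where I expect the only real bookkeeping obstacle. Given $[f],[g],[h]$, I would apply the $2$-Existence axiom twice on each side to represent $([f][g])[h]$ as $[m\circ h_1]$ with $f_1\sim f$, $g_1\sim g$, $h_1\sim h$, $m\sim f_1\circ g_1$, and dually to represent $[f]([g][h])$ as $[f_2\circ n]$ with $f_2\sim f$, $g_2\sim g$, $h_2\sim h$, $n\sim g_2\circ h_2$. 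By transitivity we get $f_1\sim f_2$, $g_1\sim g_2$, $h_1\sim h_2$, and the Associativity axiom, applied to these two triples together with $m$ and $n$, delivers exactly $f_2\circ n \sim m\circ h_1$, so the two bracketings agree in $M_{(\C,\sim)}$. The task here is really just matching the data of the iterated products to the variable pattern of the axiom; once that is done the conclusion is immediate.

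For the group statement, suppose every morphism is $\sim$-equivalent to an isomorphism. Given $[f]$, I would choose an isomorphism $\phi\sim f$ in $\C$; then $\phi\circ\phi^{-1}$ and $\phi^{-1}\circ\phi$ are identities, so well-definedness of the product gives $[f][\phi^{-1}] = [\phi\circ\phi^{-1}] = e$ and symmetrically $[\phi^{-1}][f] = e$, exhibiting $[\phi^{-1}]$ as a two-sided inverse of $[f]$. The groupoid corollary is then immediate since every morphism in a groupoid is already an isomorphism. Aside from the variable-matching in the Associativity step, every part of the argument reduces to a single direct invocation of one of the four axioms.
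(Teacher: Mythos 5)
Your proposal is correct and follows essentially the same route as the paper's proof: well-definedness via transitivity plus the Composition axiom, the unit from the Identity axiom, associativity by matching the two bracketings to the variable pattern of the Associativity axiom, and inverses from an equivalent isomorphism. The only cosmetic difference is that the paper also explicitly checks that $[\phi^{-1}]$ is independent of the chosen isomorphism $\phi\sim f$, which your argument renders unnecessary since a two-sided inverse in a monoid is automatically unique.
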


\begin{proof}
    We first check the well-definedness of the multiplication. Suppose that $f_1 \sim f_2$ and $g_1 \sim g_2$. By the definition of the multiplication in $M_{(\mathcal{C}, \sim)}$, we have $[f_1][g_1] = f_1' \circ g_1'$ and $[f_2][g_2] = f_2' \circ g_2'$, for some $f_1' \sim f_1, g_1' \sim g_1, f_2' \sim f_2, g_2' \sim g_2$ such that $f_1' \circ g_1'$ and $f_2' \circ g_2'$ exist. Note that $f_1' \sim f_1 \sim f_2 \sim f_2'$ and $g_1' \sim g_1 \sim g_2 \sim g_2'$, then by the second axiom of concentration, we have $f_1' \circ g_1' \sim f_2' \circ g_2'$ which means $[f_1][g_1]=[f_2][g_2]$. 

    To check that $M_{(\mathcal{C}, \sim)}$ is a monoid, we need to show the existence of identity and the associativity of the multiplication. First, the identity element is given by $[id_A]$, the class of the identity morphism from arbitrary object $A \in \mathcal{C}$ to itself. The first axiom of concentration guarantees its well-definedness. For any $[f] \in M_{(\mathcal{C}, \sim)}$, suppose that $f \in \textup{Mor}(B, C)$, then $[f][id_A] = [f \circ id_B] = [f]$ and $[id_A][f] = [id_C \circ f] = [f]$. Thus $[id_A]$ is indeed an identity.


    For any $[f], [g], [h] \in M_{(\mathcal{C}, \sim)}$, by the third axiom we can always find $f' \sim f, g' \sim g\sim g'',  h\sim h''$ such that $f' \circ g'$ and $g'' \circ h''$ exist, similarly we can also find $f'' \sim f', n \sim g'' \circ h'', m\sim f'\circ g', h'\sim h''$ such that $f'' \circ n$ and $m \circ h'$ exist, and then by the fourth axiom $f'' \circ n \sim m \circ h'$. Thus $[f]([g][h])=[f''\circ n]=[m \circ h']=([f][g])[h]$, so the associativity holds.

    If any morphism $f$ in $\mathcal{C}$ is equivalent to some isomorphism $f'$, then we claim that $[{f'}^{-1}]$ is a well-defined inverse of $[f]$. Suppose that $f$ is also equivalent to some isomorphism $f''$, then $[{f''}^{-1}] = [{f'}^{-1}][f'][{f''}^{-1}] =[{f'}^{-1}][f''][{f''}^{-1}] = [{f'}^{-1}]$. Additionally, $[f][{f'}^{-1}] = [{f'}][{f'}^{-1}] = id$ and $[{f'}^{-1}][f] = [{f'}^{-1}][f'] = id$. Thus $[{f'}^{-1}]$ is a well-defined inverse of $[f]$, and $M_{(\mathcal{C}, \sim)}$ is a group.
\end{proof}


We now turn to the second step in the proof of Theorem \ref{thm: Taking concentration monoid is a functor}. Having established that the concentration monoid is a well-defined monoid, we next study how concentration preserving functors interact with this construction. The following proposition shows that any such functor naturally induces a homomorphism between the associated concentration monoids.

\begin{proposition} \label{prop: concentration preserving functor induces monoid homomorphism}
    Any concentration preserving functor $F: (\C,\sim_{\C}) \rightarrow(\D,\sim_{\D})$ induces a monoid homomorphism $\phi_F: M_{(\C,\sim_{\C})} \to M_{(\D,\sim_{\D})}$ where $\phi_F([f])=[F(f)]$. Moreover, if $F$ is a concentration isomorphism, then $\phi_F$ is a monoid isomorphism.
\end{proposition}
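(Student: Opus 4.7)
The plan is to verify three things in turn: well-definedness of $\phi_F$, the monoid homomorphism axioms, and the claim about concentration isomorphisms. Each step uses only the definition of concentration preserving functor together with the axioms already invoked in Proposition \ref{prop: G is a monoid}.

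First I would check that $\phi_F$ is well defined on equivalence classes. If $f \sim_\C f'$ in $\mathrm{Mor}_\C$, then by the defining property of a concentration preserving functor we have $F(f) \sim_\D F(f')$, so $[F(f)] = [F(f')]$ in $M_{(\D,\sim_\D)}$. This shows $\phi_F([f]) := [F(f)]$ is unambiguous.

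Next I would verify the homomorphism property. Given classes $[f], [g] \in M_{(\C,\sim_\C)}$, use the $2$-existence axiom to choose $f' \sim_\C f$ and $g' \sim_\C g$ with $f' \circ g'$ defined in $\C$, so that $[f][g] = [f' \circ g']$ by definition. Then, since $F$ is a functor, $F(f') \circ F(g') = F(f' \circ g')$ exists in $\D$; since $F$ preserves concentration, $F(f') \sim_\D F(f)$ and $F(g') \sim_\D F(g)$. Hence in $M_{(\D,\sim_\D)}$,
\[
\phi_F([f]) \, \phi_F([g]) \;=\; [F(f)]\,[F(g)] \;=\; [F(f') \circ F(g')] \;=\; [F(f' \circ g')] \;=\; \phi_F([f][g]).
\]
The identity is handled by $\phi_F([id_A]) = [F(id_A)] = [id_{F(A)}]$, which equals the identity of $M_{(\D,\sim_\D)}$ by the identity axiom applied in $\D$.

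Finally, for the ``moreover'' part, assume $F$ is a concentration isomorphism with strong inverse $F^{-1}$; by definition $F^{-1}$ is also a concentration preserving functor, so it induces a monoid homomorphism $\phi_{F^{-1}}$. A direct check shows $\phi_F$ is functorial in $F$, namely $\phi_{G \circ F} = \phi_G \circ \phi_F$ and $\phi_{id_\C} = id_{M_{(\C,\sim_\C)}}$, from which $\phi_F \circ \phi_{F^{-1}}$ and $\phi_{F^{-1}} \circ \phi_F$ are the respective identity homomorphisms, giving the desired inverse. I do not expect any genuine obstacle here; the only step requiring care is the homomorphism property, where one must remember to choose a common pair $(f', g')$ of composable representatives in $\C$ and then push them forward, rather than trying to choose representatives independently downstairs in $\D$.
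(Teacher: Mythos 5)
Your proposal is correct and follows essentially the same route as the paper's proof: push composable representatives forward through $F$, use functoriality plus concentration preservation to identify $[F(f')\circ F(g')]$ with $[F(f)][F(g)]$, and handle the isomorphism case via the induced map of the strong inverse. The only difference is that you spell out the well-definedness of $\phi_F$ on classes explicitly, which the paper leaves implicit.
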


\begin{proof}
    To show $\phi_F$ is a monoid homomorphism we need to check it preserves identity and multiplication. By Proposition \ref{prop: G is a monoid} we know the identity in $M_{(\C,\sim_{\C})}$ is the class of the identity morphisms in $\C$. We have
    $$
    \phi_F([id_A])=[F(id_A)]= [id_{F(A)}].
    $$
    
    Thus $\phi_F$ preserves identity. Next we check that $\phi_F$ preserves multiplication. For any $[f], [g] \in M_{(\C,\sim_{\C})}$, find $f' \sim f$ and $g' \sim g$ such that $f' \sim g'$ exists, then
    \begin{align*}
        \phi_F([f][g])&=\phi_F([f'\circ g'])=[F(f'\circ g')]=[F(f')\circ F(g')] =[F(f')][F(g')]\\
        &=[F(f)][F(g)] 
        \quad \text{(since $F$ is concentration preserving )}\\
        &= \phi_F([f])\phi_F([g])
    \end{align*}
    which shows that $\phi_F$ is a homomorphism. 

    When $F$ is a concentration isomorphism (Definition \ref{def: concentration preserving functor}) with strong inverse $F^{-1}$, it is easy to see that $\phi_{F^{-1}}$ is the inverse of $\phi_{F}$, so $\phi_F$ is an isomorphism. 
\end{proof}


Next, we formalize the process of taking concentration monoids as a functor from $\mathscr{C}at_{\sim}$ to $\mathscr{M}on$. The two propositions above ensure that this construction is well defined on both objects and morphisms.

\begin{definition} \label{def: the functor taking concentration monoid}
    Define the functor $\mathbf{M}: \mathscr{C}at_{\sim} \to \mathscr{M}on$ as follows. For objects, $\mathbf{M}$ sends a category with concentration $(\mathcal{C}, \sim_{\mathcal{C}})$ to the concentration monoid $M_{(\mathcal{C}, \sim_{\mathcal{C}})}$. For morphisms, $\mathbf{M}$ sends a concentration preserving functor $F:(\C,\sim_{\C}) \rightarrow(\D,\sim_{\D})$ to the induced monoid homomorphism $\phi_F: M_{(\C,\sim_{\C})} \to M_{(\D,\sim_{\D})}$. We call $\mathbf{M}$ the \textbf{concentration monoid functor}.
\end{definition}

We are ready to prove Theorem \ref{thm: Taking concentration monoid is a functor}.

\begin{proof} [Proof of Theorem \ref{thm: Taking concentration monoid is a functor}]
    The well-definedness of $\mathbf{M}$ follows from Proposition \ref{prop: G is a monoid} and Proposition \ref{prop: concentration preserving functor induces monoid homomorphism}. Next we show that $\mathbf{M}$ preserves identity morphisms and compositions.

    Let $id_{\mathcal{C}}$ be the identity functor (which is obviously a concentration preserving functor) from a category with concentration $(\mathcal{C}, \sim_{\mathcal{C}})$ to itself. Its induced monoid homomorphism $\phi_{id_\mathcal{C}}$ sends $[f] \in M_{(\mathcal{C}, \sim_{\mathcal{C}})}$ to $[id_\mathcal{C}(f)] = [f]$, so $\phi_{id_\mathcal{C}}$ is an identity map.

    Given two concentration preserving functors $F: (\mathcal{C}, \sim_{\mathcal{C}}) \to (\mathcal{D}, \sim_{\mathcal{D}})$ and $G: (\mathcal{D}, \sim_{\mathcal{D}}) \to (\mathcal{E}, \sim_{\mathcal{E}})$. Their composition $G \circ F$ induces a monoid homomorphism $\phi_{G \circ F}: M_{(\mathcal{C}, \sim_{\mathcal{C}})} \to M_{(\mathcal{E}, \sim_{\mathcal{E}})}$, sending $[f]$ to $[G \circ F(f)] = \phi_G[F(f)] = \phi_G \circ \phi_F[f]$. Thus, $\phi_{G \circ F} = \phi_G \circ \phi_F$, meaning $\mathbf{M}$ preserves compositions.

    By Proposition \ref{prop: G is a monoid}, the functor $\mathbf{M}$ sends groupoids with concentration to groups, so it can also be regarded as a functor from $\mathscr{G}rpd_{\sim}$ to $\mathscr{G}rp$.
\end{proof}

\begin{remark}
    Note that a horizontal decategorification of a groupoid must be a group, but it is possible that a horizontal categorification of a group is a category with non-invertible morphisms.
\end{remark}



\subsection{$n$-concentration structures}\label{subsec:n-concentration}
The fourth condition of the concentration structure (Definition \ref{def: concentration structure}) is often difficult to verify. Instead, we will introduce more general $n$-concentration structures that help us check when an equivalence relation is a concentration structure. 
\begin{definition}\label{def: n-concentration}
    An \textbf{$n$-concentration structure} on $\mathcal{C}$ is an equivalence relation $\sim$ on $\textup{Mor}_{\mathcal{C}}$, which satisfies
    
    \begin{enumerate}
        \item $id_A \sim id_B$ for any objects $A, B \in \textup{Ob}_{\mathcal{C}}$. (Identity axiom)
        
        \item If $f \sim f', g \sim g'$ and both $f\circ g, f' \circ g'$ exist, then $f \circ g \sim f' \circ g'$. (Composition axiom)
\item For any morphisms $f_1, f_2,\dots,f_n \in \textup{Mor}_{\mathcal{C}}$, there exist some $f_1' \sim f_1, f_2' \sim f_2,\dots,f_n'\sim f_n$ such that $f_1' \circ f_2'\circ \cdots \circ f_n'$ exists. ($n$-Existence axiom)

        \end{enumerate} 

\end{definition}

The actual concentration structure can be viewed as a $2$-concentration structure with the fourth axiom (Associativity axiom). 
It is clear that an $(n+1)$-concentration structure implies an $n$-concentration structure. More importantly, a $3$-concentration structure implies not only a $2$-concentration structure but also the actual  concentration structure.

\begin{proposition} \label{prop: 3-concentration implies concentration}
    Let $\sim$ on $\C$ be a $3$-concentration structure, then $\sim$ is actually a concentration structure on $\C$.
\end{proposition}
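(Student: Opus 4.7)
The plan is to use the $3$-Existence axiom to replace the triple $(f,g,h)$ by a single triple of representatives whose genuine triple composition exists in $\C$, and then recover the Associativity axiom by comparing the two possible bracketings of this triple composition. Thus the core idea is that associativity of $\sim$ is simply associativity of composition in $\C$, transported across $\sim$ via the Composition axiom, once we have enough representatives in place.

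Concretely, suppose we are given $f \sim f'$, $g \sim g'$, $h \sim h'$, $m \sim f \circ g$, $n \sim g' \circ h'$ with both $f' \circ n$ and $m \circ h$ well defined. By the $3$-Existence axiom applied to $(f,g,h)$, I obtain $\tilde f \sim f$, $\tilde g \sim g$, $\tilde h \sim h$ such that $\tilde f \circ \tilde g \circ \tilde h$ exists in $\C$; in particular both of the partial composites $\tilde f \circ \tilde g$ and $\tilde g \circ \tilde h$ exist.

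I then apply the Composition axiom four times. First, since $\tilde f \sim f$ and $\tilde g \sim g$, and both $\tilde f \circ \tilde g$ and $f \circ g$ exist, we get $\tilde f \circ \tilde g \sim f \circ g \sim m$; symmetrically $\tilde g \circ \tilde h \sim g' \circ h' \sim n$. Next, since $\tilde f \circ \tilde g \sim m$ and $\tilde h \sim h$, and both $(\tilde f \circ \tilde g) \circ \tilde h$ and $m \circ h$ exist, the Composition axiom yields
\[
(\tilde f \circ \tilde g) \circ \tilde h \;\sim\; m \circ h .
\]
Symmetrically, from $\tilde f \sim f'$ and $\tilde g \circ \tilde h \sim n$ together with the existence of $\tilde f \circ (\tilde g \circ \tilde h)$ and $f' \circ n$, we obtain $\tilde f \circ (\tilde g \circ \tilde h) \sim f' \circ n$. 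But composition in $\C$ is strictly associative, so $(\tilde f \circ \tilde g) \circ \tilde h = \tilde f \circ (\tilde g \circ \tilde h)$, and transitivity of $\sim$ gives $m \circ h \sim f' \circ n$, which is the Associativity axiom.

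The only step requiring care is bookkeeping of existence hypotheses: each invocation of the Composition axiom needs both sides of the equivalence to be defined as morphisms in $\C$. The $3$-Existence axiom provides exactly the one existence statement that is not given to us for free, namely that $\tilde f \circ \tilde g \circ \tilde h$ (and hence its two halves and its two bracketings) is a legitimate composition; all other required existences are either hypothesized in the Associativity axiom statement or follow immediately. No genuine obstacle arises beyond this; the proposition is essentially a clean reduction of the associativity axiom to the much more natural $3$-Existence condition.
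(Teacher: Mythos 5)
Your proof is correct and follows essentially the same route as the paper: invoke the $3$-Existence axiom to produce composable representatives $\tilde f,\tilde g,\tilde h$, then use the Composition axiom (together with strict associativity in $\C$) to show both $m\circ h$ and $f'\circ n$ are equivalent to $\tilde f\circ\tilde g\circ\tilde h$. Your write-up just makes explicit the existence bookkeeping that the paper compresses into one line.
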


\begin{proof}
   We only need to check the associativity axiom. Given morphisms $f\sim f', g\sim g', h\sim h', m\sim f\circ g, n \sim g'\circ h'$ with $f' \circ n, m \circ h$ exist, we want to show $f' \circ n \sim m \circ h$. The $3$-existence axiom implies there exist $f''\sim f, g''\sim g,h''\sim h$ such that $f''\circ g'' \circ h''$ exist. Then the composition axiom implies $f'\circ n \sim f'' \circ g''\circ h'' \sim m \circ h$, which completes the proof.
\end{proof}

Thus, we can think of the concentration structure as a ``2.5-concentration structure" sitting between $2$-, and $3$-concentration structures. Moreover, the axioms of $3$-concentration structures usually provide an easier way to verify when an equivalence relation is a concentration structure.

\begin{example} \label{ex: Z/2 and Z/4 category with concentration}
We consider a category $\C$ with two objects $C$ and $D$, where $\textup{Mor}_\C(C, C) = \mathbb{Z}/2$ with morphisms denoted as $0_C$ and $1_C$, and $\textup{Mor}_\C(D, D) = \mathbb{Z}/4$ with morphisms denoted as $0_D$, $1_D$, $2_D$ and $3_D$. There are no morphisms between $C$ and $D$. See Figure \ref{fig: Z/2 and Z/4 example} for an illustration. We can construct some non-trivial $3$-concentration structures (in particular concentration structures) on $\C$ as follows. 

\begin{enumerate}
    \item The first concentration structure $\sim_a$ is given by $0_C\sim_a0_D$, $1_C\sim_a 2_D$. The corresponding concentration monoid is $M_{(\C,\sim_a)}\cong \mathbb{Z}/4$
    \item  The second concentration structure $\sim_b$ is given by $0_C\sim_b0_D\sim_b2_D$, $1_C\sim_b 1_D\sim_b3_D$. The corresponding concentration monoid is $M_{(\C,\sim_b)}\cong \mathbb{Z}/2$.
    \item The third concentration structure $\sim_c$ is given by $0_C\sim_c 1_C \sim_c 0_D$. The corresponding concentration monoid is $M_{(\C,\sim_c)}\cong \mathbb{Z}/4$
    
    \item The fourth concentration structure $\sim_d$ is given by $0_C\sim_d 0_D\sim_d 1_D\sim_d 2_D\sim_d 3_D$. The corresponding concentration monoid is $M_{(\C,\sim_d)}\cong \mathbb{Z}/2$
\end{enumerate}
\end{example}
These examples show that two concentration structures can be different while their concentration monoids are isomorphic. Moreover, the above examples can be viewed as a concentration interpretation of the direct limit of groups, which we will discuss more in Section \ref{sec: Concentrations and $G$-equivariant direct limits}.

\begin{figure}[h]

\[\begin{tikzcd}
	C && D
	\arrow["{\mathbb{Z}/2}", from=1-1, to=1-1, loop, in=55, out=125, distance=10mm]
	\arrow["{\mathbb{Z}/4}", from=1-3, to=1-3, loop, in=55, out=125, distance=10mm]
\end{tikzcd}\]
\caption{The category in Example \ref{ex: Z/2 and Z/4 category with concentration}.}
\label{fig: Z/2 and Z/4 example}
\end{figure}
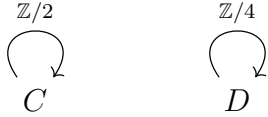

\subsection{Pullbacks of concentration structures}


Recall that a $2$-lifting functor (Definition \ref{def: $2$-lifting functor}) is a functor 
preserving the existence of compositions when morphisms are lifted from the target category back to the source category. This property turns out to be exactly what is needed to pull back concentration structures, and makes $2$-lifting functors a useful tool for generating new concentration structures from existing ones.


It is helpful to look at some concrete examples of $2$-lifting functors. In particular, the second example will reappear in Section \ref{sec: fibrations and concentration structures on fundamental groupoids}.

\begin{example} Let $F: \mathcal{C} \to \mathcal{D}$ be a functor.
    \begin{enumerate}
        \item If $F$ has a right inverse $G$, i.e. $F \circ G = id$, then $F$ is $2$-lifting since we can let $f_i = G(g_i)$.

        \item If $F$ is a surjective Grothendieck fibration, or more generally, a surjective multivalued fibration (Definition \ref{def: multivalued fibration}), then $F$ is $2$-lifting (Proposition \ref{prop: fibration induces $2$-lifting functor}). 
    \end{enumerate}
     
\end{example}

Given a $2$-lifting functor $F : \C \to \D$ and a concentration structure on $\D$, we can pull back the concentration to $\C$, as described in the following definition. Moreover,  Lemma \ref{prop: pullback concentration of $2$-lifting functor} confirms that this pullback satisfies all four axioms of concentration structure.

\begin{definition}
    Let $F: \mathcal{C} \to \mathcal{D}$ be a $2$-lifting functor and $\sim_{\D}$ be a concentration structure on $\D$. We define a concentration structure $\sim_{\D}^{F^*}$ on $\C$, such that $f\sim_{\D}^{F^*} g$ if and only if $F(f) \sim_\D F(g)$. We call it the pullback concentration structure of $\sim_\D$ along $F$, or simply the \textbf{pullback} of $\sim_\D$.
\end{definition}

\begin{lemma}\label{prop: pullback concentration of $2$-lifting functor}
    The pullback $\sim_{\D}^{F^*}$ is a concentration structure on $\mathcal{C}$.
\end{lemma}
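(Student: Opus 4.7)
The plan is to verify the four axioms of Definition \ref{def: concentration structure} directly for $\sim_{\D}^{F^*}$, pushing everything through $F$ and invoking the corresponding axiom for $\sim_\D$ on the image. Three of the four axioms follow formally from the functoriality of $F$ (i.e.\ $F(id_A)=id_{F(A)}$ and $F(f\circ g)=F(f)\circ F(g)$), so the only point where the $2$-lifting hypothesis is actually needed is the $2$-existence axiom.

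First, for the identity axiom, I would note that $F(id_A)=id_{F(A)}$ and $F(id_B)=id_{F(B)}$ are $\sim_\D$-related by the identity axiom on $\D$, hence $id_A\sim_{\D}^{F^*}id_B$. For the composition axiom, given $f\sim_{\D}^{F^*}f'$ and $g\sim_{\D}^{F^*}g'$ with $f\circ g$ and $f'\circ g'$ both defined in $\C$, applying $F$ yields $F(f)\sim_\D F(f')$ and $F(g)\sim_\D F(g')$ with $F(f)\circ F(g)$ and $F(f')\circ F(g')$ both defined in $\D$, so by the composition axiom for $\sim_\D$ we get $F(f\circ g)\sim_\D F(f'\circ g')$, i.e.\ $f\circ g\sim_{\D}^{F^*}f'\circ g'$. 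The associativity axiom is handled in the same manner: given morphisms $f\sim_{\D}^{F^*} f'$, $g\sim_{\D}^{F^*} g'$, $h\sim_{\D}^{F^*} h'$, $m\sim_{\D}^{F^*} f\circ g$, $n\sim_{\D}^{F^*} g'\circ h'$ with $f'\circ n$ and $m\circ h$ defined, pass to $F$-images and apply the associativity axiom for $\sim_\D$ to conclude $F(f'\circ n)\sim_\D F(m\circ h)$.

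The main obstacle, such as it is, lies in the $2$-existence axiom: given $f,g\in\Mor_\C$ we must produce $f'\sim_{\D}^{F^*}f$ and $g'\sim_{\D}^{F^*}g$ with $f'\circ g'$ existing in $\C$. The naive idea of invoking $2$-existence for $\sim_\D$ on the pair $F(f),F(g)$ gives witnesses $\tilde f\sim_\D F(f)$ and $\tilde g\sim_\D F(g)$ with $\tilde f\circ\tilde g$ defined in $\D$, but these live in $\D$ rather than $\C$. This is exactly the gap bridged by the $2$-lifting hypothesis on $F$: it supplies $f',g'\in\Mor_\C$ with $F(f')=\tilde f$, $F(g')=\tilde g$, and $f'\circ g'$ defined in $\C$. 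These are the required witnesses, since $F(f')=\tilde f\sim_\D F(f)$ and similarly for $g'$.

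Putting these four verifications together will complete the proof. An alternative route, which I might mention as a remark, would be to verify instead that $\sim_{\D}^{F^*}$ is a $3$-concentration structure in the sense of Definition \ref{def: n-concentration} and then invoke Proposition \ref{prop: 3-concentration implies concentration}; however, this would require $3$-existence for $\sim_\D$ on $\D$, which is not part of the concentration axioms, so the direct verification above seems cleaner.
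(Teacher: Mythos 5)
Your proposal is correct and follows essentially the same route as the paper's own proof: axioms 1, 2, and 4 are pushed through $F$ using functoriality and the corresponding axiom for $\sim_\D$, while the $2$-existence axiom is the one place the $2$-lifting hypothesis is used, exactly as you describe. The only cosmetic omission is the (trivial) remark that $\sim_{\D}^{F^*}$ is an equivalence relation, which the paper notes in passing.
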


\begin{proof}
    It is easy to check that $\sim_{\D}^{F^*}$ is an equivalence relation. Next we check the four axioms of concentration in Definition \ref{def: concentration structure}. 

    \textbf{Axiom 1.} Let $A,B$ be two objects in $\C$, and $id_{A},id_{B}$ be their identity morphisms. Then $F(id_A)=id_{F(A)}\sim_\D id_{F(B)}=F(id_B)$, and thus $id_A \sim_\D^{F^*} id_B$.

    \textbf{Axiom 2.} Suppose that $f \sim_{\D}^{F^*} f', g \sim_{\D}^{F^*} g'$, and both of $f \circ g, f' \circ g'$ exist. Then $F(f) \sim_{\D} F(f'), F(g) \sim_{\D} F(g')$ by the definition of pullback. Since $\sim_\D$ is a concentration structure, we have $F(f \circ g) = F(f) \circ F(g) \sim_{\D} F(f') \circ F(g') = F(f' \circ g')$. Thus $(f \circ g) \sim_\D^{F*} (f' \circ g')$.

    \textbf{Axiom 3.} Let $f,g$ be two morphisms in $\C$, pick two morphisms $\bar{f},\bar{g}$ in $\D$ such that $\bar{f}\sim_\D F(f),\bar{g}\sim_\D F(g)$ and $\bar{f}\circ\bar{g}$ exist. Since $F$ is $2$-lifting, we can find $f',g'$ in $\C$ such that $f'\circ g'$ exist, and $F(f')=\bar{f},F(g')=\bar{g}$. Then $F(f')\sim_\D F(f),F(g')\sim_\D F(g)$, and thus $f' \sim_\D^{F*} f$ and $g'\sim_\D^{F*}g$.

    \textbf{Axiom 4.} Given any morphisms $f\sim_\D^{F*} f', g\sim_\D^{F*} g', h\sim_\D^{F*} h', m\sim_\D^{F*} f\circ g, n \sim_\D^{F*} g'\circ h'$ with $f' \circ n, m \circ h$ exist,  we want to show $f' \circ n \sim_\D^{F*} m \circ h$. By the definition of pullback, we have $F(f)\sim_{\D} F(f'), \ F(g)\sim_\D F(g'), \ F(h)\sim_\D F(h'), \ F(m)\sim_\D F(f)\circ F(g),\  F(n) \sim_\D F(g')\circ F(h')$ with $F(f') \circ F(n),\  F(m) \circ F(h)$ exist. Since $\sim_D$ satisfies the associativity axiom, we have $F(f') \circ F(n) \sim_D F(m) \circ F(h)$. Then $F(f' \circ n) \sim_D F(m \circ h)$, which implies $f' \circ n \sim_\D^{F*} m \circ h$.
\end{proof}


Lemma \ref{prop: pullback concentration of $2$-lifting functor} shows that the pullback along a $2$-lifting functor yields a new concentration structure from an existing one, we now compare their concentration monoids. The next lemma shows that such a functor induces an isomorphism between two concentration monoids.

\begin{lemma} \label{lem: pullback concentration induce isomorphism on concentration monoids}
    Let $F: \C \to \D$ be a $2$-lifting functor, and $\sim_\D$ be a concentration structure on $\D$. Then
    $F: (\C,\sim_{\D}^{F^*}) \to (\D,\sim_\D)$ is a concentration preserving functor, and it induces an isomorphism between the concentration monoids $M_{(\C,\sim_{\D}^{F^*})}$ and $M_{(\D,\sim_\D)}$.
\end{lemma}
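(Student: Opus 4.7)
The plan is to first verify concentration preservation directly from the definition, then apply Proposition \ref{prop: concentration preserving functor induces monoid homomorphism} to obtain an induced monoid homomorphism $\phi_F : M_{(\C,\sim_{\D}^{F^*})} \to M_{(\D,\sim_\D)}$, and finally check that this homomorphism is bijective.

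Concentration preservation is immediate: if $f \sim_{\D}^{F^*} f'$, then by definition of the pullback $F(f) \sim_\D F(f')$, so $F$ sends equivalent morphisms to equivalent morphisms. This immediately gives the induced homomorphism $\phi_F([f]) = [F(f)]$.

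For injectivity, suppose $\phi_F([f]) = \phi_F([g])$, that is, $[F(f)] = [F(g)]$ in $M_{(\D,\sim_\D)}$. Then $F(f) \sim_\D F(g)$, which by the definition of the pullback means $f \sim_{\D}^{F^*} g$, hence $[f] = [g]$.

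The main (mild) obstacle is surjectivity, since the $2$-lifting condition only asserts that one can lift \emph{pairs} of composable morphisms, not individual morphisms. I would get around this by applying the $2$-lifting property to the composable pair $(h, id_{s(h)})$ for a given $h \in \textup{Mor}_\D$: this yields morphisms $f_1, f_2$ in $\C$ with $F(f_1) = h$ and $F(f_2) = id_{s(h)}$ such that $f_1 \circ f_2$ exists, so in particular $f_1 \in \textup{Mor}_\C$ satisfies $\phi_F([f_1]) = [F(f_1)] = [h]$. Since $\phi_F$ is a bijective monoid homomorphism, it is a monoid isomorphism, completing the proof.
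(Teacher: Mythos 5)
Your proposal is correct and follows essentially the same route as the paper: verify concentration preservation from the definition of the pullback, invoke Proposition \ref{prop: concentration preserving functor induces monoid homomorphism} for the induced homomorphism, and check injectivity and surjectivity directly. The only difference is that you explicitly justify surjectivity of $F$ on morphisms by lifting the composable pair $(h, id_{s(h)})$, whereas the paper simply asserts that any $2$-lifting functor is surjective; your version fills in that small detail.
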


\begin{proof}
    If $f \sim_{\D}^{F*} g$, then by the definition of pullback we have $F(f) \sim_\D F(g)$, which exactly means that $F$ is a concentration preserving functor.

    Next we show that the induced monoid homomorphism $\phi_F: M_{(\C,\sim_{\D}^{F^*})} \to M_{(\D,\sim_\D)}$ is an isomorphism. Note that any $2$-lifting functor must be surjective, so for any $[g] \in M_{(\D,\sim_\D)}$, there exists a morphism $f$ in $\C$ such that $F(f) = g$. Then $\phi_F[f] = [g]$, which means $\phi_F$ is surjective.
    
    Suppose that $\phi_F[f] = \phi_F[f']$, then $F(f) \sim_\D F(f')$. By the definition of pullback, we have $f \sim_\D^{F*} f'$, so $\phi_F$ is injective.
\end{proof}

Next we will use this fact that pullbacks along $2$-lifting functors preserve the induced concentration monoids to prove Theorem \ref{thm: Every concentration structure is pullback}.

\vspace{5mm}

Given a category with concentration $(\C, \sim)$, we take its concentration monoid $M_{(\C, \sim)}$. Let $\M_{(\mathcal{C}, \sim)}$ be the category containing single object $*$ and $\textup{Mor}(*, *) = M_{(\mathcal{C}, \sim)}$, where the composition of morphisms is given by the multiplication in $M_{(\mathcal{C}, \sim)}$. Then there is a natural concentration preserving functor
\begin{equation}\label{equ: concentrating functor}
    F_{(C,\sim)}: (\C,\sim) \rightarrow(\M_{(\C,\sim)},\sim_{dis})
\end{equation}
sending any object in $\C$ to $*$, and morphism $f$ to $[f]$. We call $F_{(C,\sim)}$ the \textbf{concentrating functor} associated to $(\C,\sim)$.

Next lemma shows that the concentrating functor is $2$-lifting.

\begin{lemma} \label{lem: concentrating functor is 2-lifting}
    $F_{(C,\sim)}$ is $2$-lifting, as a functor from $\C$ to $\M_{(\C,\sim)}$.
\end{lemma}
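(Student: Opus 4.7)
The plan is to unwind the definitions and observe that the required lifting property for $F_{(\C,\sim)}$ is essentially a restatement of the 2-Existence axiom (Axiom~3) in Definition~\ref{def: concentration structure}. The target category $\M_{(\C,\sim)}$ has a single object $*$, so every pair of morphisms in $\M_{(\C,\sim)}$ is composable, and every morphism is of the form $[f]$ for some $f \in \textup{Mor}_\C$.

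First I would fix two morphisms $g_1, g_2$ in $\M_{(\C,\sim)}$ with $g_1 \circ g_2$ existing (which is automatic since $\M_{(\C,\sim)}$ has only one object). Choose representatives $f_1, f_2 \in \textup{Mor}_\C$ so that $g_1 = [f_1] = F_{(\C,\sim)}(f_1)$ and $g_2 = [f_2] = F_{(\C,\sim)}(f_2)$. These exist because, by construction, $F_{(\C,\sim)}$ is surjective on morphisms.

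Next I would invoke the 2-Existence axiom applied to $f_1$ and $f_2$: there exist $f_1' \sim f_1$ and $f_2' \sim f_2$ in $\C$ such that the composition $f_1' \circ f_2'$ exists. Since $\sim$-equivalent morphisms have the same image under $F_{(\C,\sim)}$, we obtain $F_{(\C,\sim)}(f_1') = [f_1'] = [f_1] = g_1$ and $F_{(\C,\sim)}(f_2') = [f_2'] = [f_2] = g_2$. This produces the required lift, verifying the $2$-lifting property.

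There is no real obstacle here; the lemma is essentially a tautology once one notices that the $2$-lifting condition, specialized to the one-object codomain $\M_{(\C,\sim)}$, says exactly that any two morphisms in $\C$ can be replaced by $\sim$-equivalent morphisms whose composition exists, which is precisely the 2-Existence axiom.
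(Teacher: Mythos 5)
Your proof is correct and follows essentially the same route as the paper: pick representatives $f_1, f_2$ in $\C$ of the given morphisms in $\M_{(\C,\sim)}$, apply the 2-Existence axiom to obtain composable $\sim$-equivalent replacements, and note that these have the same image under $F_{(\C,\sim)}$. No gaps.
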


\begin{proof}
    For any morphisms $[f], [g]$ in $\M_{(\C,\sim)}$, note that $f, g$ are morphisms in $\C$, so the third axiom of concentration guarantees that there exist $f' \sim f, g' \sim g$ such that $f' \circ g'$ exists. Moreover, $F_{(\C,\sim)}(f') = [f'] = [f]$, and similarly, $F_{(\C,\sim)}(g') = [g]$. Thus $F_{(C,\sim)}$ is $2$-lifting.
\end{proof}


If we take the pullback of the discrete concentration structure $\sim_{dis}$ on $\M_{(C,\sim)}$ along $F_{(C,\sim)}$, we obtain a new concentration structure $\sim^{F^*_{(C,\sim)}}_{dis}$ on $\C$. The following proposition confirms that this pullback coincides exactly with the original concentration $\sim$.

\begin{proposition}
    For any category with concentration $(\C,\sim)$ . The pullback concentration $\sim^{F^*_{(C,\sim)}}_{dis}$ is the same as the original concentration $\sim$ on $\C$.
\end{proposition}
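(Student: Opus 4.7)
The plan is essentially to unwind the definitions in the correct order, and there is no serious obstacle.

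First, I would observe that the statement is well-posed: by Lemma \ref{lem: concentrating functor is 2-lifting}, the concentrating functor $F_{(\C,\sim)} : \C \to \M_{(\C,\sim)}$ is $2$-lifting, so by Lemma \ref{prop: pullback concentration of $2$-lifting functor} the pullback $\sim^{F^*_{(\C,\sim)}}_{dis}$ is a bona fide concentration structure on $\C$. Thus it makes sense to compare it with $\sim$.

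Next, I would show equality by chasing the definitions for an arbitrary pair of morphisms $f, g \in \textup{Mor}_{\C}$. By the definition of the pullback concentration, $f \sim^{F^*_{(\C,\sim)}}_{dis} g$ if and only if $F_{(\C,\sim)}(f) \sim_{dis} F_{(\C,\sim)}(g)$ in $\M_{(\C,\sim)}$. Since $\sim_{dis}$ is the discrete concentration structure on the one-object category $\M_{(\C,\sim)}$, this is equivalent to the literal equality $F_{(\C,\sim)}(f) = F_{(\C,\sim)}(g)$ in $\textup{Mor}(\M_{(\C,\sim)}) = M_{(\C,\sim)}$. By the definition of $F_{(\C,\sim)}$ in \eqref{equ: concentrating functor}, this equality is $[f] = [g]$ in $M_{(\C,\sim)} = \textup{Mor}_{\C}/\!\sim$, which by definition of the quotient is exactly $f \sim g$.

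Stringing these equivalences together gives $f \sim^{F^*_{(\C,\sim)}}_{dis} g \iff f \sim g$, so the two equivalence relations on $\textup{Mor}_{\C}$ coincide. The argument is a pure definition chase; the only place where care is needed is noting that the discreteness of $\sim_{dis}$ collapses the pullback condition from an equivalence up to $\sim_{dis}$ to an honest equality of morphisms in $M_{(\C,\sim)}$, which is precisely what converts the pullback into the original quotient relation.
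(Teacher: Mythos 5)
Your proof is correct and is essentially the same definition chase as the paper's: both reduce $f \sim^{F^*_{(\C,\sim)}}_{dis} g$ to $F_{(\C,\sim)}(f) \sim_{dis} F_{(\C,\sim)}(g)$, i.e.\ to $[f]_\sim = [g]_\sim$, i.e.\ to $f \sim g$. Your added remark on well-posedness (citing the $2$-lifting lemma) is a reasonable extra sanity check but not a departure from the paper's argument.
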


\begin{proof}
    For simplicity we denote $\sim^{F^*_{(C,\sim)}}_{dis}$ as $\sim'$. We need to show for any two morphisms $f,g$ in $\C$, $f\sim g\iff f\sim'g$.

    Note that $F_{(\C, \sim)}(f) = [f]_{\sim}$ and $F_{(\C, \sim)}(g) = [g]_{\sim}$, so $[f]_{\sim} = [g]_{\sim}$ if and only if $F_{(\C, \sim)}(f) \sim_{dis} F_{(\C, \sim)}(g)$. In other words, $f \sim g$ if and only if $f \sim' g$.
\end{proof}

This proposition shows that every concentration structure arises as the pullback of a discrete one, thereby giving a proof of Theorem \ref{thm: Every concentration structure is pullback}. 

\subsection{$2$-lifting functors and horizontal decategorification}\label{subsec: decategorification}

Note that any $2$-lifting functor (Definition \ref{def: $2$-lifting functor}) is necessarily surjective. However, we don't want to impose surjectivity alone in the definition of horizontal decategorification. The extra $2$-lifting property ensures the source category contains the entire multiplication structure of the monoid. The following example illustrates a surjective functor that is not $2$-lifting. We will see why the source category is not an appropriate horizontal categorification of the corresponding monoid. 

\begin{example} \label{ex: surjective but not 2-lifting functor}
We consider a category $\C$ with three objects $C$ $D$ and $E$, and three non-trivial morphisms $f,g,h$, where $f: C \to D$, $g:D \to E$ and $h=g \circ f: C \to E$.  See Figure \ref{fig: surjective but not 2-lifting functor} for an illustration.    

We construct a surjective functor $F$ from $\C$ to $\D$, where $\D$ is the category  with one object representing $\mathbb{Z}/2$, as follows. $F$ maps all objects $C,D,E$ in $\C$ to the single object in $\D$, and sends the morphisms $f$ to $0$, $g$ to $1$, and $h$ to $1$. 

It is straightforward to check that $F$ is a surjective functor, but $\C$ is not an appropriate horizontal categorification of $\mathbb{Z}/2$, since the cyclic structure in $\mathbb{Z}/2$ is not reflected in $\C$. For a legit horizontal categorification of cyclic group, see Example \ref{ex: not 3-concentration} and Figure \ref{fig: z/3}.
\end{example}

The $2$-lifting definition of horizontal categorification and decategorification is intuitive, but not intrinsic to the category itself. Constructing a $2$-lifting functor from a category $\C$ to a single-object category is generally nontrivial because finding the appropriate monoid is hard. In practice, 
such a monoid always comes from a suitable concentration structures on $\C$.

\begin{figure}[H]
\[\begin{tikzcd}
	C & D & E
	\arrow["f", curve={height=-6pt}, from=1-1, to=1-2]
	\arrow["h"', curve={height=12pt}, from=1-1, to=1-3]
	\arrow["g", curve={height=-6pt}, from=1-2, to=1-3]
\end{tikzcd}\]
\caption{The category in Example \ref{ex: surjective but not 2-lifting functor}.}
\label{fig: surjective but not 2-lifting functor}
\end{figure}

\begin{proof}[Proof of Theorem \ref{thm: internal and external categorifications are equivalent}]

We first show the case for category and monoid, then the case for groupoid and group follows from Theorem \ref{thm: Taking concentration monoid is a functor}. Let $\C$ be a category and let $M$ be a monoid. Suppose that $\C$ is an internal horizontal categorification of $M$, then there exists a concentration structure $\sim$ on $\C$ such that $M_{(\C, \sim)} \cong M$. By Lemma \ref{lem: concentrating functor is 2-lifting}, the concentrating functor $F_{(\C, \sim)}: \C \to \M_{(\C, \sim)}$ is a $2$-lifting functor. Compose with the functor $\M_{(\C, \sim)} \to \M$ induced by the isomorphism $M_{(\C, \sim)} \to M$, we obtain a $2$-lifting functor $\C \to \M$, which means that $\C$ is an external horizontal categorification of $M$.

Suppose that $\C$ is an external horizontal categorification of $M$, then there exists a $2$-lifting functor $F: \C \to \M$. Consider $\sim_{dis}^{F^*}$, the pullback of the discrete concentration structure on $\M$, Lemma \ref{lem: pullback concentration induce isomorphism on concentration monoids} tells us that $M_{(\C, \sim_{dis}^{F^*})} \cong M_{(\M, \sim_{dis})}$. Thus we have $M_{(\C, \sim_{dis}^{F^*})} \cong M$, meaning $\C$ is an internal horizontal categorification of $M$.
\end{proof}

\begin{remark}
    Following the observation that a $2$-lifting functor from a category to a one-object category can lift the multiplication to the composition, we can potentially define the horizontal categorification of an algebraic object to be a multi-object category admitting a functor lifting the entire algebraic structure.
\end{remark}

\subsection{Concentration monoid and adjoint functors}\label{subsec: Concentration monoid and adjoint functors}

In this section we prove Theorem \ref{thm: M is left adjoint to C}, which asserts that the concentration monoid functor $\mathbf{M}$ is left adjoint to the functor $\mathbf{C}_{\sim}$, which sends a monoid to the associated category with discrete concentration. We first give a detailed description of $\mathbf{C_{\sim}}$.

Recall that given a monoid $M$, we denote $\M$ to be the category consisting of a single object $*$ and $\textup{Mor}(*, *) = M$, where the composition of morphisms is given by the multiplication in $M$. This construction can be encoded as a functor $\mathbf{C}: \mathscr{M}on \to \mathscr{C}at$. It sends a monoid homomorphism $\phi: M \to N$ to a functor $\Phi: \M \to \mathcal{N}$ such that $\Phi(*) = *$ and $\Phi(x) = \phi(x)$ for $x \in M$. The functor
\begin{equation} \label{equ: the functor C}
    \mathbf{C_{\sim}}: \mathscr{M}on \to \mathscr{C}at_{\sim}
\end{equation}
is defined analogously to $\mathbf{C}$, sending $M$ to $(\mathcal{M},\sim_{dis})$ and $\phi$ to $\Phi$.

To establish the adjunction between $\mathbf{M}$ and $\mathbf{C}_{\sim}$, we adopt the unit–counit definition of adjunction.




\begin{definition}
    Let $\mathcal{C}, \mathcal{D}$ be two categories, and let $F: \mathcal{C} \to \mathcal{D}$ and $G: \mathcal{D} \to \mathcal{C}$ be a pair of functors. We say $F$ is \textbf{left adjoint} to $G$, if there exist natural transformations $\epsilon: FG \to 1_{\C}$ and $\eta: 1_{\mathcal{D}} \to GF$ such that the compositions
    \begin{align*}
        F \xrightarrow{F \eta} FGF \xrightarrow{\epsilon F} F \\
        G \xrightarrow{\eta G} GFG \xrightarrow{G \epsilon} G
    \end{align*}
    are identities. $\epsilon$ is called the counit of the adjunction, $\eta$ is called the unit of the adjunction.
\end{definition}

\begin{proof}[Proof of Theorem \ref{thm: M is left adjoint to C}]

We want to show that $\mathbf{M}: \mathscr{C}at_{\sim} \to \mathscr{M}on$ (Definition \ref{def: the functor taking concentration monoid}) is left adjoint to $\mathbf{C}_\sim:\mathscr{M}on \to \mathscr{C}at_{\sim}$ (Eq. \ref{equ: the functor C}), and we will prove it by constructing a counit $\epsilon: \mathbf{MC_\sim} \to 1_{\mathscr{M}on}$ and a unit $\eta: 1_{\mathscr{C}at_{\sim}} \to \mathbf{{C_\sim}M}$.


Let $M$ be a monoid, then $\mathbf{C}_\sim(M) = (\mathcal{M}, \sim_{dis})$, the canonical category associated to $M$ with discrete concentration. We identify $M$ with the concentration monoid $M_{(\mathcal{M}, \sim_{dis})}$, by identifying $f\in M$ with $[f]_{\sim_{ dis}} \in M_{(\mathcal{M}, \sim_{dis})}$. Then we have $\mathbf{MC_\sim}(M) = M$.

Let $(\C, \sim)$ be a category with concentration, then $\mathbf{C_\sim{M}}(\C,\sim)=(\M_{(\C,\sim)},\sim_{dis})$, the canonical one-object category associated to the concentration monoid $M_{(\C,\sim)}$ with discrete concentration.


Define the counit $\epsilon = \{\epsilon_M \in \textup{Mor}(M, M)\}$ and the unit $\eta = \{\eta_{(\C, \sim)}\in \textup{Mor}((\C, \sim), (\mathcal{M}_{(\C,\sim)}, \sim_{dis}))\}$ such that
\begin{align*}
    &\epsilon_M = id_M \\
    &\eta_{(\C, \sim)} = F_{(\C, \sim)}
\end{align*}
where $F_{(\C, \sim)}$ is the concentrating functor (Eq. \ref{equ: concentrating functor}) associated to $(\C, \sim)$.

Next we check that
\begin{align*}
    & \mathbf{M} \xrightarrow{\mathbf{M} \eta} \mathbf{M}\mathbf{{C_\sim}}\mathbf{M} \xrightarrow{\epsilon\mathbf{M}} \mathbf{M}; \\
    & \mathbf{{C_\sim}} \xrightarrow{\eta \mathbf{C_\sim}} \mathbf{{C_\sim}}\mathbf{M}\mathbf{{C_\sim}} \xrightarrow{\mathbf{{C_\sim}}\epsilon} \mathbf{{C_\sim}}
\end{align*}
are identities. In other words, we want to show
\begin{align*}
    & \mathbf{M}(\mathcal{C}, \sim) \xrightarrow{\mathbf{M} (\eta_{(\mathcal{C}, \sim)})} \mathbf{M}(\mathcal{M}_{(\C, \sim)}, \sim_{dis})= \mathbf{M}(\C, \sim)\xrightarrow{\epsilon_{\mathbf{M}(\C, \sim)}} \mathbf{M}(\mathcal{C}, \sim) \\
    & \mathbf{{C_\sim}}(M) \xrightarrow{\eta_{ \mathbf{C_\sim}(M)}} \mathbf{{C_\sim}}(M) \xrightarrow{\mathbf{{C_\sim}}(\epsilon_M)} \mathbf{{C_\sim}}(M)
\end{align*}
are identities. Note that $\epsilon$ is identity, so we just need $\mathbf{M} (\eta_{(\mathcal{C}, \sim)})$ and $\eta_{ \mathbf{C_\sim}(M)}$ to be identities, which follows from the definitions of $\mathbf{M}, \mathcal{C}_{\sim}$ and $\eta$.
\end{proof}



\subsection{Sub-concentrations and quotient concentrations}\label{subsec: sub and quotient concentrations}

In this subsection we introduce sub-concentrations and quotient concentrations, which parallel the familiar concepts of submonoids and quotient monoids. We then prove the corresponding part of Theorem \ref{thm: concentration restrict to sub,quotient, and semidirect product}, showing that the concentration monoid functor $\mathbf{M}$ preserves these structures.


\begin{definition} \label{def: sub-concentration}
    Let $(\mathcal{C}, \sim)$ be a category with concentration, $\mathcal{B}$ be a sub-category of $\mathcal{C}$. We say $\sim$ is \textbf{closed} on $\mathcal{B}$ if $ \sim_{|_{\mathcal{B}}}$ (the equivalence relation restricted on $\textup{Mor}_{\mathcal{B}}$) is a concentration structure on $\mathcal{B}$.  $(\mathcal{B}, \sim_{|_{\mathcal{B}}})$ is called a \textbf{sub-concentration} of $(\mathcal{C}, \sim)$.
\end{definition}


Whenever we refer to a sub-concentration, we implicitly mean a sub-category with concentration. The following proposition directly follows from Definition \ref{def: sub-concentration}.

\begin{proposition} \label{prop: stabilizatin of sub concentration is submonoid}
    If $(\mathcal{B}, \sim _{|_{\mathcal{B}}})$ is a sub-concentration of $(\mathcal{C}, \sim)$, then $M_{(\mathcal{B}, \sim _{|_{\mathcal{B}}})}$ is a submonoid of $M_{(\mathcal{C}, \sim)}$.
\end{proposition}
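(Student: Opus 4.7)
The plan is to realize $M_{(\mathcal{B}, \sim_{|_{\mathcal{B}}})}$ inside $M_{(\mathcal{C}, \sim)}$ by pushing forward equivalence classes along the inclusion functor. Let $\iota: \mathcal{B} \hookrightarrow \mathcal{C}$ be the inclusion. Because $\sim_{|_{\mathcal{B}}}$ is by definition the restriction of $\sim$ to $\textup{Mor}_{\mathcal{B}}$, any two morphisms of $\mathcal{B}$ that are $\sim_{|_{\mathcal{B}}}$-equivalent are automatically $\sim$-equivalent in $\mathcal{C}$, so $\iota$ is a concentration preserving functor.

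Applying Proposition \ref{prop: concentration preserving functor induces monoid homomorphism}, I would obtain a monoid homomorphism
\[
\phi_{\iota}: M_{(\mathcal{B}, \sim_{|_{\mathcal{B}}})} \longrightarrow M_{(\mathcal{C}, \sim)}, \qquad [f]_{\sim_{|_{\mathcal{B}}}} \longmapsto [f]_{\sim}.
\]
It then suffices to show that $\phi_\iota$ is injective, since then $M_{(\mathcal{B}, \sim_{|_{\mathcal{B}}})}$ is isomorphic to its image, which is a submonoid of $M_{(\mathcal{C}, \sim)}$ (closure under the product and containment of the identity class are automatic from $\phi_\iota$ being a homomorphism).

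For injectivity I would argue as follows. If $f, g \in \textup{Mor}_{\mathcal{B}}$ satisfy $\phi_\iota([f]_{\sim_{|_{\mathcal{B}}}}) = \phi_\iota([g]_{\sim_{|_{\mathcal{B}}}})$, then $f \sim g$ in $\mathcal{C}$. Since both $f$ and $g$ lie in $\textup{Mor}_{\mathcal{B}}$, the definition of the restricted relation gives $f \sim_{|_{\mathcal{B}}} g$, whence $[f]_{\sim_{|_{\mathcal{B}}}} = [g]_{\sim_{|_{\mathcal{B}}}}$.

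I do not expect any serious obstacle here: the whole statement is essentially the observation that restriction of an equivalence relation to a subset is compatible with the quotient. The only point worth isolating is that one must distinguish two different types of ``equivalence classes'': the class $[f]_{\sim_{|_{\mathcal{B}}}}$ inside $\textup{Mor}_{\mathcal{B}}$ is generally a proper subset of $[f]_{\sim}$ inside $\textup{Mor}_{\mathcal{C}}$, but the map sending one to the other is nevertheless well-defined and injective by the above argument. Once this is clear, the conclusion that $M_{(\mathcal{B}, \sim_{|_{\mathcal{B}}})}$ is a submonoid of $M_{(\mathcal{C}, \sim)}$ follows immediately.
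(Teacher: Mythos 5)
Your proposal is correct; the paper offers no written proof (it states the proposition ``directly follows from Definition \ref{def: sub-concentration}''), and your argument via the inclusion functor, Proposition \ref{prop: concentration preserving functor induces monoid homomorphism}, and the injectivity of $[f]_{\sim_{|_{\mathcal{B}}}} \mapsto [f]_{\sim}$ is exactly the reasoning being left implicit. Your remark distinguishing the class $[f]_{\sim_{|_{\mathcal{B}}}}$ from $[f]_{\sim}$ and identifying $M_{(\mathcal{B}, \sim_{|_{\mathcal{B}}})}$ with its image is also consistent with the convention the paper adopts immediately afterward.
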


One drawback of Definition \ref{def: sub-concentration} is that it is not clear when $\sim$ is closed on $\mathcal{B}$. The lemma below gives a sufficient condition.

\begin{lemma}
  Let $(\mathcal{C}, \sim)$ be a category with concentration, $\mathcal{B}$ be a sub-category of $\mathcal{C}$. If $f\in \textup{Mor}_{\mathcal{B}}$ and $f\sim g $ imply $ g\in \textup{Mor}_{\mathcal{B}}$, then $\sim$ is closed on $\mathcal{B}$.
\end{lemma}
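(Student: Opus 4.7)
The plan is to verify the four axioms of Definition \ref{def: concentration structure} for the restricted relation $\sim_{|_{\mathcal{B}}}$ on $\textup{Mor}_{\mathcal{B}}$, exploiting two standing facts: (i) $\sim$ already satisfies these axioms on $\mathcal{C}$, and (ii) the closure hypothesis guarantees that any morphism $\sim$-equivalent to a morphism in $\mathcal{B}$ also lies in $\mathcal{B}$. Since $\sim_{|_{\mathcal{B}}}$ is clearly an equivalence relation (restrictions of equivalence relations are equivalence relations), the content of the proof lies entirely in the four axioms.

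First, I would handle the Identity axiom: for any $A, B \in \textup{Ob}_{\mathcal{B}}$, the morphisms $id_A, id_B$ belong to $\mathcal{B}$ (since $\mathcal{B}$ is a subcategory), and $id_A \sim id_B$ in $\mathcal{C}$, hence $id_A \sim_{|_{\mathcal{B}}} id_B$. The Composition axiom is also immediate: if $f, f', g, g' \in \textup{Mor}_{\mathcal{B}}$ with $f \sim f'$, $g \sim g'$, and $f \circ g$, $f' \circ g'$ defined in $\mathcal{B}$, then the same data satisfies the composition axiom in $\mathcal{C}$, so $f \circ g \sim f' \circ g'$ and both sides lie in $\textup{Mor}_{\mathcal{B}}$ because $\mathcal{B}$ is closed under composition.

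For the $2$-Existence axiom, given $f, g \in \textup{Mor}_{\mathcal{B}}$, the axiom applied in $\mathcal{C}$ yields $f' \sim f$ and $g' \sim g$ with $f' \circ g'$ defined in $\mathcal{C}$. The closure hypothesis then forces $f', g' \in \textup{Mor}_{\mathcal{B}}$, and the existing composition $f' \circ g'$ lies in $\textup{Mor}_{\mathcal{B}}$ since $\mathcal{B}$ is a subcategory. The Associativity axiom follows in exactly the same manner: any family of morphisms in $\mathcal{B}$ witnessing the hypothesis of the axiom inside $\mathcal{B}$ also witnesses it inside $\mathcal{C}$, the conclusion $f' \circ n \sim m \circ h$ follows in $\mathcal{C}$, and closure plus the subcategory property keep everything in $\mathcal{B}$.

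There is no serious obstacle here; the only subtlety is remembering that, in axioms 3 and 4, the auxiliary morphisms produced by the axioms for $\sim$ on $\mathcal{C}$ need to be verified to lie in $\mathcal{B}$, which is exactly what the stated hypothesis supplies. Alternatively, one could shortcut the argument by checking only the three axioms of a $3$-concentration structure (Definition \ref{def: n-concentration}) and invoking Proposition \ref{prop: 3-concentration implies concentration}, but the direct verification of all four axioms is equally short.
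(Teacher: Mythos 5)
Your proof is correct and follows essentially the same route as the paper: axioms 1, 2, and 4 pass to the restriction automatically because $\mathcal{B}$ is a subcategory, and the closure hypothesis is used exactly where the paper uses it, namely to ensure the morphisms $f'\sim f$, $g'\sim g$ produced by the $2$-Existence axiom in $\mathcal{C}$ actually lie in $\textup{Mor}_{\mathcal{B}}$. The only difference is that you spell out the routine axioms that the paper dismisses in one sentence.
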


\begin{proof}
    We just need to check that $\sim_{|_{\mathcal{B}}}$ satisfies the four axioms of concentration. It is easy to verify the first, the second and the fourth axiom, since $\sim_{|_{\mathcal{B}}}$ is a restriction of the given concentration structure $\sim$ and $\mathcal{B}$ is a subcategory of $\C$.

    Now we check the third axiom. Suppose that $f, g \in \textup{Mor}_{\mathcal{B}}$, then there exist $f' \sim f, g' \sim g$ such that $f' \circ g'$ exists. By the condition of the lemma, we know $f', g' \in \textup{Mor}_{\mathcal{B}}$, so $f' \sim_{|_{\mathcal{B}}} f$ and $g' \sim_{|_{\mathcal{B}}} g$.
\end{proof}


In practice, just as in other areas of mathematics, we sometimes want to identify a substructure with the image of a structure-preserving embedding. In our setting, this means viewing a category with concentration as sitting inside a larger one in a way that fully respects the concentration structure. The precise definition and explanation are given as follows.

\begin{definition}
    A concentration preserving functor $\iota: (\mathcal{B}, \sim_{\mathcal{B}}) \to (\mathcal{C}, \sim_{\mathcal{C}})$ is called an (concentration preserving) \textbf{embedding} if
    \begin{itemize}
        \item $\iota$ is injective on both objects and morphisms.
        \item $f \sim_{\mathcal{B}} g$ if and only if $\iota(f) \sim_{\mathcal{C}} \iota(g)$ for any $f, g \in \textup{Mor}_{\mathcal{B}}$.
    \end{itemize}
\end{definition}

Note that for any injective $\iota$ we obtain a concentration structure $\sim_{\iota(\mathcal{B})}$ on the image $\iota(\mathcal{B})$, where $\iota(f) \sim_{\iota(\mathcal{B})}\iota(g)$ if and only if $f\sim_{{\mathcal{B}}}g$. The following proposition follows immediately from the definitions.

\begin{proposition}
    Let $\iota: (\mathcal{B}, \sim_{\mathcal{B}}) \to (\mathcal{C}, \sim_{\mathcal{C}})$ be a concentration preserving embedding then  $(\iota({\mathcal{B}}),\sim_{\iota(\mathcal{B})})=(\iota({\mathcal{B}}),\sim_{\C |_{\iota({\mathcal{B})}}})$. In particular, $(\iota({\mathcal{B}}),\sim_{\iota(\mathcal{B})})$ is a sub-concentration of $(\C,\sim_{\C})$ and $M_{(\iota(\mathcal{B}), \sim_{\iota(\mathcal{B})})}$ is a submonoid of $M_{(\mathcal{C}, \sim_{\C})}$.
\end{proposition}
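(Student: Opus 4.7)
The plan is to reduce everything to the two bullet points in the definition of a concentration preserving embedding and then invoke Proposition \ref{prop: stabilizatin of sub concentration is submonoid}. First I would verify the equality of the two equivalence relations on $\textup{Mor}_{\iota(\mathcal{B})}$. Given $\iota(f), \iota(g) \in \textup{Mor}_{\iota(\mathcal{B})}$ (where $f,g \in \textup{Mor}_{\mathcal{B}}$ and this presentation is unique because $\iota$ is injective on morphisms), unwind the two relations: by construction, $\iota(f) \sim_{\iota(\mathcal{B})} \iota(g)$ iff $f \sim_{\mathcal{B}} g$, while $\iota(f) \sim_{\mathcal{C}|_{\iota(\mathcal{B})}} \iota(g)$ is just $\iota(f) \sim_{\mathcal{C}} \iota(g)$. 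The second bullet of the embedding definition is precisely the equivalence of these two conditions, so the two relations agree morphism-by-morphism.

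Next I would argue that $\iota(\mathcal{B})$ is a sub-category of $\mathcal{C}$ and that $\sim_{\mathcal{C}}$ is closed on it in the sense of Definition \ref{def: sub-concentration}. That $\iota(\mathcal{B})$ is a sub-category is immediate since $\iota$ is a functor that is injective on both objects and morphisms. To see that $\sim_{\mathcal{C}}|_{\iota(\mathcal{B})}$ is a concentration structure on $\iota(\mathcal{B})$, I would transport $\sim_{\mathcal{B}}$ through the bijection induced by $\iota$: since $\iota$ restricts to a bijection $\mathcal{B} \to \iota(\mathcal{B})$ of categories (and respects identities and composition), $\sim_{\iota(\mathcal{B})}$ automatically inherits all four axioms of concentration from $\sim_{\mathcal{B}}$. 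By the equality of relations established in the previous step, $\sim_{\mathcal{C}}|_{\iota(\mathcal{B})}$ is itself a concentration structure, so $(\iota(\mathcal{B}), \sim_{\mathcal{C}}|_{\iota(\mathcal{B})})$ is a sub-concentration of $(\mathcal{C},\sim_{\mathcal{C}})$.

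Finally, the submonoid claim follows directly: $M_{(\iota(\mathcal{B}), \sim_{\iota(\mathcal{B})})} = M_{(\iota(\mathcal{B}), \sim_{\mathcal{C}}|_{\iota(\mathcal{B})})}$ by the equality of relations, and this is a submonoid of $M_{(\mathcal{C},\sim_{\mathcal{C}})}$ by Proposition \ref{prop: stabilizatin of sub concentration is submonoid}. There is no substantive obstacle here; the statement is essentially a bookkeeping exercise translating the definition of an embedding into the language of sub-concentrations. The only point requiring a small amount of care is observing that the well-definedness of $\sim_{\iota(\mathcal{B})}$ (and its concentration axioms) relies on $\iota$ being injective on morphisms, so that $\iota(f)=\iota(f')$ forces $f=f'$ and hence the assignment $\iota(f)\mapsto [f]_{\sim_{\mathcal{B}}}$ is unambiguous.
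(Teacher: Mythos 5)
Your argument is correct and matches the paper's intent exactly: the paper gives no written proof, stating only that the proposition ``follows immediately from the definitions,'' and your write-up is precisely that unwinding (the second bullet of the embedding definition identifies the two relations, transport along the isomorphism $\mathcal{B}\cong\iota(\mathcal{B})$ gives the concentration axioms, and Proposition \ref{prop: stabilizatin of sub concentration is submonoid} gives the submonoid claim). No gaps.
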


As a convention, we identify $(\mathcal{B}, \sim_{|_{\mathcal{B}}})$ and $(\iota(\mathcal{B}), \sim_{|_{\iota(\mathcal{B})}})$ when the embedding $\iota$ has no ambiguity in the context. With this convention, we say $(\mathcal{B}, \sim_{\mathcal{B}})$ is a sub-concentration of $(\mathcal{C}, \sim_{\mathcal{C}})$, and $M_{(\mathcal{B}, \sim_{\mathcal{B}})}$ is a submonoid of $M_{(\mathcal{C}, \sim_{\C})}$.

\vspace{5mm}

Before constructing quotient concentrations, we first need the notion of normal sub-concentrations. Just as normal subgroups in group theory, normal sub-concentrations serve as the appropriate setting for defining quotient concentrations.

\begin{definition}
    A sub-concentration $(\mathcal{B}, \sim_{|_{\mathcal{B}}})$ of $(\mathcal{C}, \sim)$ is called \textbf{normal}, if for any $f \in \textup{Mor}_{\mathcal{C}}, h \in \textup{Mor}_{\mathcal{B}}$, there exist some $h_1, h_2 \in \textup{Mor}_{\mathcal{B}}$ such that $[f][h] = [h_1][f]$ and $[h][f] = [f][h_2]$.
\end{definition}

Recall that a submonoid $S$ of a monoid $M$ is called normal if $xS = Sx$ for each $x \in M$. The next proposition shows that two notions of normality are equivalent under the concentration monoid functor.

\begin{proposition}
    $(\mathcal{B}, \sim_{|_{\mathcal{B}}})$ is a normal sub-concentration of $(\mathcal{C}, \sim)$ if and only if $M_{(\mathcal{B}, \sim_{|_{\mathcal{B}}})}$ is a normal submonoid of $M_{(\mathcal{C}, \sim)}$.
\end{proposition}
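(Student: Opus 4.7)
The plan is to reduce the statement to a direct unpacking of definitions, using the submonoid embedding $M_{(\mathcal{B},\sim_{|_{\mathcal{B}}})} \hookrightarrow M_{(\mathcal{C},\sim)}$ from the preceding proposition. Write $M := M_{(\mathcal{C},\sim)}$ and $S := M_{(\mathcal{B},\sim_{|_{\mathcal{B}}})}$, viewed as a submonoid of $M$ via the map sending the restricted class $[h]_{\sim_{|_{\mathcal{B}}}}$ to the class $[h]_{\sim}$. Under this identification, the equations $[f][h] = [h_1][f]$ and $[h][f] = [f][h_2]$ appearing in the definition of a normal sub-concentration become equalities in $M$ between $[f][h]$ (resp.\ $[h][f]$) and an element of $S[f]$ (resp.\ $[f]S$), with $x := [f]$ ranging over all of $M$ and $y := [h]$ ranging over all of $S$.

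For the forward implication, I would fix $x \in M$ and choose a representative $f \in \textup{Mor}_{\mathcal{C}}$ with $x = [f]$. For each $y \in S$, pick $h \in \textup{Mor}_{\mathcal{B}}$ with $y = [h]$. Normality of the sub-concentration supplies $h_1, h_2 \in \textup{Mor}_{\mathcal{B}}$ with
$$xy = [f][h] = [h_1][f] = [h_1]\,x, \qquad yx = [h][f] = [f][h_2] = x\,[h_2].$$
Since $[h_1], [h_2] \in S$, we conclude $xS \subseteq Sx$ and $Sx \subseteq xS$, hence $xS = Sx$. As $x$ was arbitrary, $S$ is a normal submonoid.

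For the converse, given any $f \in \textup{Mor}_{\mathcal{C}}$ and $h \in \textup{Mor}_{\mathcal{B}}$, set $x := [f] \in M$ and $y := [h] \in S$. Normality of $S$ in $M$ produces elements $y_1, y_2 \in S$ with $xy = y_1 x$ and $yx = x y_2$. Picking any representatives $h_1, h_2 \in \textup{Mor}_{\mathcal{B}}$ with $y_1 = [h_1]$ and $y_2 = [h_2]$ yields $[f][h] = [h_1][f]$ and $[h][f] = [f][h_2]$, which is exactly the condition required of a normal sub-concentration.

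I do not anticipate any serious obstacle: the whole argument is a bookkeeping translation once the submonoid identification from the preceding proposition is in hand. The only subtle point, already absorbed into that identification, is that for $h, h' \in \textup{Mor}_{\mathcal{B}}$ one has $h \sim_{|_{\mathcal{B}}} h'$ iff $h \sim h'$, so that equivalence classes taken in $\mathcal{B}$ and in $\mathcal{C}$ agree on the nose; granted this, each implication is a literal rewriting of one condition as the other.
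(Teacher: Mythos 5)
Your proof is correct and takes essentially the same approach as the paper's: both sides are a direct unpacking of the definitions of normal sub-concentration and normal submonoid under the identification of $M_{(\mathcal{B},\sim_{|_{\mathcal{B}}})}$ as a submonoid of $M_{(\mathcal{C},\sim)}$. You are somewhat more explicit than the paper about that identification (and about the injectivity of $[h]_{\sim_{|_{\mathcal{B}}}}\mapsto[h]_{\sim}$), which is a harmless refinement rather than a different argument.
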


\begin{proof}
    For any $[f] \in M_{(\mathcal{C}, \sim_{|_{\mathcal{C}}})}$, note that $[f] M_{(\mathcal{B}, \sim_{|_{\mathcal{B}}})} \subseteq M_{(\mathcal{B}, \sim_{|_{\mathcal{B}}})}[f]$ if and only if for any $[h] \in M_{(\mathcal{B}, \sim_{|_{\mathcal{B}}})}$, there exists some $[h_1] \in M_{(\mathcal{B}, \sim_{|_{\mathcal{B}}})}$ such that $[f][h] = [h_1][f]$. Similarly, $M_{(\mathcal{B}, \sim_{|_{\mathcal{B}}})}[f] \subseteq [f] M_{(\mathcal{B}, \sim_{|_{\mathcal{B}}})}$ if and only if for any $[h] \in M_{(\mathcal{B}, \sim_{|_{\mathcal{B}}})}$, there exists some $[h_2] \in M_{(\mathcal{B}, \sim_{|_{\mathcal{B}}})}$ such that $[h][f] = [f][h_2]$. Then the proposition follows immediately.
\end{proof}

With the notion of normal sub-concentration, we can now define the corresponding quotient concentration.

\begin{definition}
Let $(\mathcal{B}, \sim_{|_{\mathcal{B}}})$ be a normal sub-concentration of $(\mathcal{C}, \sim)$. Define the \textbf{quotient concentration} $\sim_{/\mathcal{B}}$ on $\mathcal{C}$, such that $f \sim_{/\mathcal{B}} g$ if and only if there exist $h_1, h_2 \in \textup{Mor}_{\mathcal{B}}$ satisfying $[h_1][f] = [g][h_2]$.
\end{definition}

Before we show $\sim_{/\mathcal{B}}$ is a concentration, we first observe that if $f \sim g$, then for any object $A$ in $\mathcal{B}$, $[id_A][f] = [f] = [g] = [g][id_A]$, so $f \sim g$ implies that $f \sim_{/\mathcal{B}} g$.

\begin{lemma}\label{lem: quotien concentration is a concentration structure}
    $\sim_{/\mathcal{B}}$ is indeed a concentration structure on $\mathcal{C}$.
\end{lemma}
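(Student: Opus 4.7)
The plan is to reinterpret $\sim_{/\mathcal{B}}$ as the pullback of a congruence on the concentration monoid. Set $M = M_{(\mathcal{C}, \sim)}$ and $N = M_{(\mathcal{B}, \sim_{|_{\mathcal{B}}})}$; by the preceding proposition, $N$ is a normal submonoid of $M$. Define an auxiliary relation $\approx$ on $M$ by declaring $\alpha \approx \beta$ iff there exist $n_1, n_2 \in N$ with $n_1 \alpha = \beta n_2$. Unwinding the definition of $\sim_{/\mathcal{B}}$, we see $f \sim_{/\mathcal{B}} g$ if and only if $[f] \approx [g]$, so it suffices to establish sufficient structure for $\approx$.

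First I would verify that $\approx$ is an equivalence relation on $M$. Reflexivity uses the identity of $N$. Symmetry and transitivity both rely on the normality identity $\alpha N = N \alpha$ for all $\alpha \in M$, which allows one to transport an $N$-element from one side of an $M$-element to the other and then to merge two $N$-factors into one using closure of $N$ under multiplication. Next I would check that $\approx$ is a monoid congruence: given $\alpha_1 \approx \beta_1$ and $\alpha_2 \approx \beta_2$, the key move is to use normality to commute a middle $N$-factor past $\alpha_1$ (or $\beta_2$) so that all surviving $N$-elements collect on the outside of the product $\alpha_1 \alpha_2$, yielding $\alpha_1 \alpha_2 \approx \beta_1 \beta_2$.

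With $\approx$ identified as a monoid congruence, the four concentration axioms for $\sim_{/\mathcal{B}}$ fall out cleanly. The identity axiom is immediate since $[id_A] = [id_B]$ already holds in $M$. The composition axiom is simply the compatibility of $\approx$ with multiplication, together with $[f \circ g] = [f][g]$ whenever the composite is defined. The 2-existence axiom is inherited from the 2-existence of $\sim$ itself via the refinement $f \sim g \Rightarrow f \sim_{/\mathcal{B}} g$ already observed before the lemma. For the associativity axiom, the hypotheses translate into $\approx$-equivalences $[f'] \approx [f]$, $[g'] \approx [g]$, $[h'] \approx [h]$, $[m] \approx [f][g]$, and $[n] \approx [g'][h']$ in $M$; repeated use of congruence gives the chain $[f'][n] \approx [f][g'][h'] \approx [f][g][h] = [f][g][h] \approx [m][h]$, which combined with $[f' \circ n] = [f'][n]$ and $[m \circ h] = [m][h]$ yields $f' \circ n \sim_{/\mathcal{B}} m \circ h$.

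I expect the main obstacle to be the normality bookkeeping needed to establish symmetry, transitivity, and congruence of $\approx$ --- each step is elementary, but one must carefully track where the $N$-factors migrate as they are shuffled across $M$-elements via $\alpha N = N \alpha$. Once these monoid-level statements are secured, the verification of the four concentration axioms for $\sim_{/\mathcal{B}}$ is essentially formal.
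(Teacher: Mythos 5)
Your proposal is correct, and it reorganizes the argument in a way that differs from the paper's proof. The paper verifies everything directly at the level of $\sim_{/\mathcal{B}}$: it checks reflexivity, symmetry, transitivity, and each of the four axioms by explicit manipulation of morphisms $h_1,h_2,\dots$ in $\textup{Mor}_{\mathcal{B}}$ inside $M_{(\mathcal{C},\sim)}$, with the associativity axiom in particular requiring a rather long chain of equalities shuffling elements $k_1,\dots,k_4$ across the product via normality. You instead factor the whole lemma through the observation that $\sim_{/\mathcal{B}}$ is the preimage, under $f\mapsto [f]$, of the relation $\approx$ on $M=M_{(\mathcal{C},\sim)}$ induced by the normal submonoid $N=M_{(\mathcal{B},\sim_{|_{\mathcal{B}}})}$ --- which is exactly the congruence $\mathcal{R}_S$ that the paper introduces only \emph{after} this lemma and dismisses as ``not difficult to check.'' Your route has two advantages: once $\approx$ is known to be a monoid congruence, the composition and associativity axioms become one-line formal consequences (using $[f\circ g]=[f][g]$ whenever the composite exists), and you simultaneously supply the proof of the congruence claim that the paper later relies on to define $M/S$; it also makes Proposition \ref{prop: concentration monoid of quotient concentration is the quotient of monoid} nearly immediate. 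The cost is the same in both versions: the normality bookkeeping ($\alpha N = N\alpha$, plus closure of $N$ under multiplication) needed for symmetry, transitivity, and compatibility with multiplication is identical to the computations the paper performs for symmetry, transitivity, and Axiom 2. Two small points to tidy up when writing this in full: justify the identification of $N$ with the submonoid $\{[h]: h\in\textup{Mor}_{\mathcal{B}}\}$ of $M$ (this is the paper's earlier convention on embeddings), and note that in your final chain for the associativity axiom the intermediate term should be $[f'][g'][h']$ before passing to $[f][g][h]$; the chain goes through either way by congruence, but as written the first step replaces $[f']$ and $[n]$ in two different orders.
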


\begin{proof}
    We first check that $\sim_{/\mathcal{B}}$ is an equivalence relation. The reflexivity follows from that $[id_A][f] = [f][id_A]$ for any $f \in \textup{Mor}_{\mathcal{C}}$ and any $A \in \textup{Ob}_{\mathcal{B}}$.
    
    For the symmetry, suppose that $f \sim_{/\mathcal{B}} g$, then there exist $h_1, h_2 \in \textup{Mor}_{\mathcal{B}}$ satisfying $[h_1][f] = [g][h_2]$. Since $(\mathcal{B}x, \sim_{|_{\mathcal{B}}})$ is a normal sub-concentration, there exist $h_1', h_2' \in \textup{Mor}_{\mathcal{B}}$ such that $[h_1][f] = [f][h_1']$ and $[g][h_2] = [h_2'][g]$. Then $[h_2'][g] = [f][h_1']$, which means $g \sim_{/\mathcal{B}} f$.
    
    For the transitivity, suppose that $f \sim_{/\mathcal{B}} g$ and $g \sim_{/\mathcal{B}} h$, then there exist $h_1,h_2,h_3,h_4 \in \textup{Mor}_{\mathcal{B}}$ such that $[h_1][f] = [g][h_2]$ and $[h_3][g] = [h][h_4]$. Since $(\mathcal{B}, \sim_{|\mathcal{B}})$ is a sub-concentration, we can find $h_1', h_4'\in \textup{Mor}_{\mathcal{B}}$ such that $[h_1'] = [h_3][h_1]$ and $[h_4'] = [h_4][h_2]$. Then $[h_1'][f] = [h_3][h_1][f] = [h_3][g][h_2] = [h][h_4][h_2] = [h][h_4']$, and thus $f \sim_{/\mathcal{B}} h$.

    Next we check the four axioms of concentration.
    
    \textbf{Axiom 1.} For any $A, B \in \textup{Ob}_{\mathcal{C}}$, $id_A \sim_{/\mathcal{B}} id_B$, because $[h][id_A] = [h] = [h][id_B]$ for any $h \in \textup{Mor}_{\mathcal{B}}$.
    
    \textbf{Axiom 2.} Suppose that $f \sim_{/\mathcal{B}} f', g \sim_{/\mathcal{B}} g'$ and $f \circ g, f' \circ g'$ both exist, then there exist $h_1,h_2,h_3,h_4 \in \textup{Mor}_{\mathcal{B}}$ such that $[h_1][f] = [f'][h_2]$ and $[h_3][g] = [g'][h_4]$. Since $(\mathcal{B}, \sim_{|\mathcal{B}})$ is normal, there exist $h_5, h_6 \in \textup{Mor}_{\mathcal{B}}$ such that $[f][h_3] = [h_5][f]$, $[h_2][g'] = [g'][h_6]$. Then $[h_1][h_5][f \circ g] = [h_1][h_5][f][g] = [h_1][f][h_3][g] = [f'][h_2][g'][h_4] = [f'][g'][h_6][h_4] = [f' \circ g'][h_6][h_4]$. We can find $h_7, h_8 \in \textup{Mor}_{\mathcal{B}}$ such that $[h_7] = [h_1][h_5]$ and $[h_8] = [h_6][h_4]$, then $[h_7][f \circ g] = [f' \circ g'][h_8]$. Thus $f \circ g \sim_{/\mathcal{B}} f' \circ g'$.

    \textbf{Axiom 3.} It follows from the fact that $f \sim f'$ implies $f \sim_{/\mathcal{B}} f'$.

    \textbf{Axiom 4.} Given any morphisms $f\sim_{/\mathcal{B}} f', g\sim_{/\mathcal{B}} g', h\sim_{/\mathcal{B}} h', m\sim_{/\mathcal{B}} f\circ g, n \sim_{/\mathcal{B}} g'\circ h'$ with $f' \circ n, m \circ h$ exist,  we want to show $f' \circ n \sim_{/\mathcal{B}} m \circ h$. The definition of $\sim_{/\mathcal{B}}$ implies that there exist $h_1,h_1',h_2,h_2',h_3,h_3',h_4,h_4',h_5,h_5'$ in $\textup{Mor}_\mathcal{B}$ such that $[h_1][f]=[f'][h_1'],[h_2][g']=[g][h_2'],[h_3][h']=[h][h_3'],[h_4][m]=[f\circ g][h_4'], [h_5][n]=[g'\circ h'][h_5']$. By the normality of $(\mathcal{B}, \sim_{|\mathcal{B}})$, there exist $k_1,k_2,k_3,k_4$ in $\textup{Mor}_\mathcal{B}$ satisfying    
    \begin{align*}
        [k_1][f'\circ n]&=[f'][h_1'][k_2][h_2][k_3]([h_5][n])= [h_1][f][k_2][h_2]([k_3][g'])[h'][h_5']\\
        &=[h_1][f][k_2]([h_2][g'])([h_3][h'])[h_5']=[h_1][f]([k_2][g])[h_2'][h][h_3'][h_5']\\
        &=[h_1]([f][g][h_4'])[h_2'][h][h_3'][h_5']=[h_1][h_4][m][h_2'][h][h_3'][h_5']\\
        &=[m \circ h][k_4].
    \end{align*}
    
    Thus we have $f' \circ n \sim_{/\mathcal{B}} m \circ h$.

\end{proof}

Recall that given a monoid $M$, a congruence relation $\mathcal{R}$ on $M$ is an equivalence relation satisfying  that $a\mathcal{R}a'$ and $b\mathcal{R}b'$ implies $ab\mathcal{R}a'b'$ for any $a,a',b,b' \in M$. Given a congruence relation $\mathcal{R}$ on $M$ we can define a quotient monoid $S/\mathcal{R}$.

A normal submonoid $S$ of $M$ induces a congruence relation $\mathcal{R}_S$ on $M$ such that $a\mathcal{R}_Sb$ if and only if there exist $s_1,s_2\in S$ with $s_1a=bs_2$. It is not difficult to check that $\mathcal{R}_S$ is a congruence relation. In this case, we denote the quotient monoid $M/\mathcal{R}_S$ simply as $M/S$. 

\begin{remark}
    There is a more natural congruence relation $\mathcal{R}'_S$ induced by a normal submonoid $S$ of $M$, defined by $a\mathcal{R}'_Sb$ if and only if $Sa=bS$. It is easy to see that $a\mathcal{R}'_Sb$ implies $a\mathcal{R}_Sb$, but not vice versa in general. However, if $S$ is a subgroup of a group $M$, then $\mathcal{R}'_S=\mathcal{R}_S$, and $M/\mathcal{R}_S$ is the quotient group $M/S$ in the usual sense.   
\end{remark}

We now verify that taking concentration monoids is compatible with the quotient structures.

\begin{proposition} \label{prop: concentration monoid of quotient concentration is the quotient of monoid}
    Let $(\mathcal{B}, \sim_{|_{\mathcal{B}}})$ be a normal sub-concentration of $(\mathcal{C}, \sim)$, then $M_{(\mathcal{C}, \sim_{/\mathcal{B}})} \cong M_{(\mathcal{C}, \sim)} / M_{(\mathcal{B}, \sim_{|_{\mathcal{B}}})}$.
\end{proposition}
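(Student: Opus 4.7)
The plan is to exhibit the isomorphism via the identity functor on $\mathcal{C}$, viewed as a concentration preserving functor $\mathrm{id}: (\mathcal{C}, \sim) \to (\mathcal{C}, \sim_{/\mathcal{B}})$. The observation made just before Lemma \ref{lem: quotien concentration is a concentration structure}, namely that $f \sim g$ implies $f \sim_{/\mathcal{B}} g$ (taking $h_1 = h_2 = \mathrm{id}_A$ for some $A \in \textup{Ob}_\mathcal{B}$), shows this functor is concentration preserving. By Proposition \ref{prop: concentration preserving functor induces monoid homomorphism}, it induces a monoid homomorphism
$$\phi: M_{(\mathcal{C}, \sim)} \to M_{(\mathcal{C}, \sim_{/\mathcal{B}})}, \qquad \phi([f]_\sim) = [f]_{\sim_{/\mathcal{B}}}.$$
Surjectivity of $\phi$ is immediate, since every class $[f]_{\sim_{/\mathcal{B}}}$ lifts to $[f]_\sim$.

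The main step is to identify the kernel congruence of $\phi$ with $\mathcal{R}_S$, where $S = M_{(\mathcal{B}, \sim_{|_\mathcal{B}})}$ is regarded as a submonoid of $M_{(\mathcal{C}, \sim)}$ via the embedding supplied by the proposition preceding Definition 2.15. I would unpack both equivalences side by side: $\phi([f]_\sim) = \phi([g]_\sim)$ holds exactly when $f \sim_{/\mathcal{B}} g$, which by definition of $\sim_{/\mathcal{B}}$ means there exist $h_1, h_2 \in \textup{Mor}_\mathcal{B}$ with $[h_1]_\sim [f]_\sim = [g]_\sim [h_2]_\sim$ in $M_{(\mathcal{C}, \sim)}$. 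Since $[h_1]_\sim$ and $[h_2]_\sim$ lie in $S$, this is exactly the relation $[f]_\sim \mathcal{R}_S [g]_\sim$. Conversely, any relation $[f]_\sim \mathcal{R}_S [g]_\sim$ arises from elements $s_1 = [h_1]_\sim, s_2 = [h_2]_\sim$ of $S$ with $h_1, h_2 \in \textup{Mor}_\mathcal{B}$ and $s_1 [f]_\sim = [g]_\sim s_2$, which gives $f \sim_{/\mathcal{B}} g$.

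Thus $\phi$ descends to a well-defined surjective homomorphism $M_{(\mathcal{C}, \sim)} / \mathcal{R}_S \to M_{(\mathcal{C}, \sim_{/\mathcal{B}})}$ that is also injective, so it is an isomorphism. Unwinding notation yields the claimed identification $M_{(\mathcal{C}, \sim_{/\mathcal{B}})} \cong M_{(\mathcal{C}, \sim)} / M_{(\mathcal{B}, \sim_{|_\mathcal{B}})}$. I do not anticipate a serious obstacle; the subtle point is only to verify that the defining condition of $\sim_{/\mathcal{B}}$, phrased as an equation $[h_1][f] = [g][h_2]$ inside $M_{(\mathcal{C}, \sim)}$, translates verbatim into the congruence $\mathcal{R}_S$ used to build the quotient monoid, which is a direct comparison once the submonoid identification is taken for granted.
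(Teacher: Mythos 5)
Your proposal is correct and is essentially the paper's argument run in the opposite direction: the paper defines the map $M_{(\mathcal{C}, \sim_{/\mathcal{B}})} \to M_{(\mathcal{C}, \sim)}/M_{(\mathcal{B}, \sim_{|_{\mathcal{B}}})}$, $[f]_{/\mathcal{B}} \mapsto [[f]]$, and checks well-definedness, homomorphism, and bijectivity by hand, while you build its inverse by factoring the induced surjection $M_{(\mathcal{C},\sim)} \to M_{(\mathcal{C}, \sim_{/\mathcal{B}})}$ through the congruence $\mathcal{R}_S$. Both hinge on exactly the same observation, that $f \sim_{/\mathcal{B}} g$ if and only if $[h_1][f]=[g][h_2]$ for some $h_1,h_2 \in \textup{Mor}_{\mathcal{B}}$, i.e.\ if and only if $[f]\,\mathcal{R}_S\,[g]$, so no further comparison is needed.
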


\begin{proof}
    To distinguish different equivalence relations, we will denote the elements in $M_{(\mathcal{C}, \sim)}$ as $[f]$, the elements in $M_{(\mathcal{C}, \sim)} / M_{(\mathcal{B}, \sim_{|_{\mathcal{B}}})}$ as $[[f]]$, and the elements in $M_{(\mathcal{C}, \sim_{/\mathcal{B}})}$ as $[f]_{/\mathcal{B}}$.
    
    Consider the map $\phi: M_{(\mathcal{C}, \sim_{/\mathcal{B}})} \to M_{(\mathcal{C}, \sim)} / M_{(\mathcal{B}, \sim_{|_{\mathcal{B}}})}$ sending $[f]_{/\mathcal{B}}$ to $[[f]]$, we claim that $\phi$ is a monoid isomorphism. First, we check $\phi$ is well defined.  Suppose that $[f]_{/\mathcal{B}} = [g]_{/\mathcal{B}}$, then $[h_1][f] = [g][h_2]$ for some $h_1, h_2 \in \textup{Mor}_{\mathcal{B}}$. Note that since $[h_1], [h_2] \in M_{(\mathcal{B}, \sim_{|_{\mathcal{B}}})}$, we have $[[f]] = [[g]]$.

    Next we check that $\phi$ is a monoid homomorphism. Let $[f]_{/\mathcal{B}}, [g]_{/\mathcal{B}}$ be two elements in $M_{(\mathcal{C}, \sim_{/\mathcal{B}})}$. Since $[f]_{/\mathcal{B}}[g]_{/\mathcal{B}} = [f' \circ g']_{/\mathcal{B}}$ for some $f' \sim_{/\mathcal{B}} f$ and $g' \sim_{/\mathcal{B}} g$, we have $\phi([f]_{/\mathcal{B}}[g]_{/\mathcal{B}})=[[f' \circ g']]$. On the other hand, $[f][g] = [f'' \circ g'']$ for some $f'' \sim f$ and $g'' \sim g$, so $\phi([f]_{/\mathcal{B}}) \phi([g]_{/\mathcal{B}}) = [[f]][[g]] = [[f][g]] = [[f'' \circ g'']]$. Note that by the observation before Lemma \ref{lem: quotien concentration is a concentration structure} and the transitivity of  $\sim_{/\mathcal{B}}$, we have $f' \sim_{/\mathcal{B}}f''$ and $g' \sim_{/\mathcal{B}} g''$, implying that $f' \circ g' \sim_{/\mathcal{B}} f'' \circ g''$. By the well-definedness of $\phi$, we have $[[f' \circ g']] = [[f'' \circ g'']]$, so $\phi([f]_{/\mathcal{B}}[g]_{/\mathcal{B}})=\phi([f]_{/\mathcal{B}}) \phi([g]_{/\mathcal{B}}).$

    Lastly, we check that $\phi$ is a bijection. The surjectivity is obvious. For the injectivity, suppose that $[[f]] = [[g]]$, then $[h_1][f] = [g][h_2]$ for some $[h_1], [h_2] \in M_{(\mathcal{B}, \sim_{|_{\mathcal{B}}})}$. Then $[f]_{/\mathcal{B}} = [g]_{/\mathcal{B}}$ follows from the definition of $\sim_{/\mathcal{B}}$.
\end{proof}

We end this subsection with an example of sub-concentration and quotient concentration.

\begin{example} \label{ex: Z/2 and Z/4 category with concentration.2}

Consider the category with concentration $(\C, \sim_a)$ in Example \ref{ex: Z/2 and Z/4 category with concentration}. For convenience, we simply denote $\sim_a$ as $\sim$.

We consider the subcategory $\mathcal{B}$ of $\C$, with a single object $D$, and morphisms $0_D$ and $2_D$. Then $(\mathcal{B},\sim_{dis})$ is a sub-concentration of $(\C,\sim_a)$, and $M_{(\mathcal{B},\sim_{dis})}\cong \mathbb{Z}/2$ is a subgroup of $M_{(\C,\sim)}\cong \mathbb{Z}/4$.

It is not hard to verify that $(\mathcal{B},\sim_{dis})$ is normal. The corresponding quotient concentration $\sim_{/\mathcal{B}}$ is given by $0_C \sim_{/\mathcal{B}}  0_D \sim_{/\mathcal{B}} 2_D \sim_{/\mathcal{B}} 1_C$. The quotient concentration group is $M_{(\C,\sim_{/\mathcal{B}})}\cong M_{(\C,\sim)}/M_{(\mathcal{B},\sim_{dis})} \cong \mathbb{Z}/2$. 

\end{example}





\subsection{More examples of concentration structures}\label{subsec: mroe examples}

At the end of Section \ref{subsec:n-concentration} we gave some basic examples of concentration structures. In this subsection we provide more examples to further illustrate the idea of concentration structures. 

\begin{example} \label{ex: Z2 Z4 with one morphism}
    Consider a category with two objects, $C$ and $D$, where $\textup{Mor}(C, C) = \mathbb{Z}/2$ and $\textup{Mor}(D, D) = \mathbb{Z}/4$. Moreover, there is a unique morphism from $C$ to $D$ (see Figure \ref{fig: Z/2 and Z/4 example2}), which differs from the category in Example \ref{ex: Z/2 and Z/4 category with concentration}. In this case, it turns out that the trivial concentration structure is the only possible concentration structure.    
\end{example}
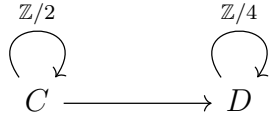
\begin{figure}[h]
    \[\begin{tikzcd}
	C && D
	\arrow["{\mathbb{Z}/2}", from=1-1, to=1-1, loop, in=55, out=125, distance=10mm]
	\arrow[from=1-1, to=1-3]
	\arrow["{\mathbb{Z}/4}", from=1-3, to=1-3, loop, in=55, out=125, distance=10mm]
\end{tikzcd}\]
\caption{The category in Example \ref{ex: Z2 Z4 with one morphism}.}
\label{fig: Z/2 and Z/4 example2}
\end{figure}

\begin{example} \label{ex: z2 times z2}
    In this example, we consider the category shown in Figure \ref{fig:z/2 times z/2}. For $i,j \in \{C,D\}$, The morphisms from $i$ to $j$ are of the form $x_i^j$, where $x=r,b,g,d$ denotes red, blue, green and black, respectively. Moreover, the compositions are defined as follows, whenever the composition exists.
    \begin{align*}
        &r_j^k \circ r_i^j = b_j^k \circ b_i^j= g_j^k \circ g_i^j=d_j^k\circ d_i^j=d_i^k \\
        &r_j^k \circ b_i^j = b_j^k \circ r_i^j=g_j^k \circ d_i^j=d_j^k \circ g_i^j=g_i^k\\
        &b_j^k \circ g_i^j = g_j^k \circ b_i^j=r_j^k \circ d_i^j=d_j^k \circ r_i^j=r_i^k\\
        & g_j^k \circ r_i^j = r_j^k \circ g_i^j=b_j^k \circ d_i^j=d_j^k \circ b_i^j=b_i^k
    \end{align*}
    
    Next, we consider the equivalence relation obtained by identifying morphisms of the same color. It is straightforward to check that this equivalence is a $3$-concentration, and the corresponding concentration monoid is $\mathbb{Z}/2 \times \mathbb{Z}/2$.   
\end{example}

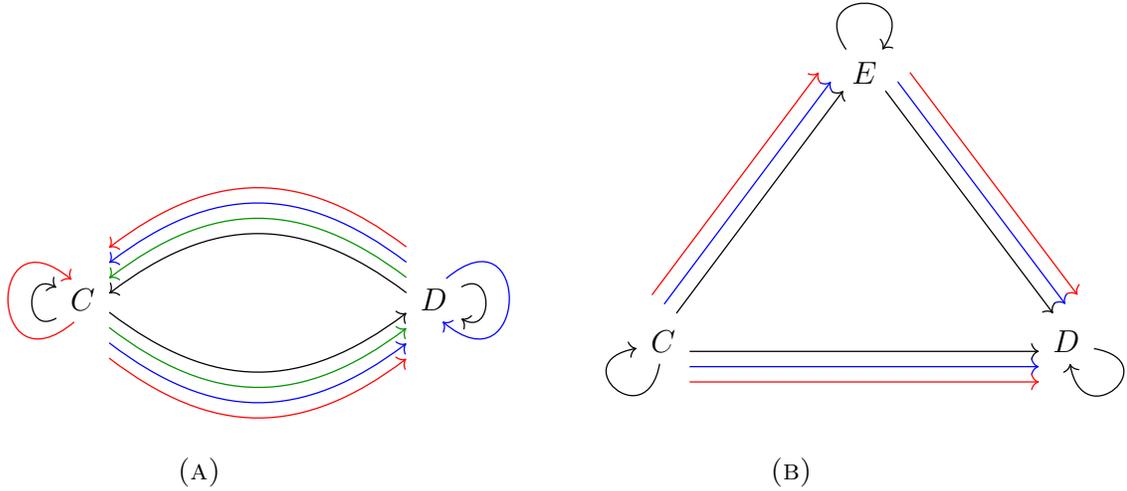
\begin{figure}
    \begin{subfigure}{0.4\textwidth}
        \centering
        \begin{tikzcd}
    	C &&&& D
    	\arrow[shift right=2, color=red, from=1-1, to=1-1, loop, in=140, out=220, distance=15mm]
    	\arrow[from=1-1, to=1-1, loop, in=150, out=210, distance=5mm]
    	\arrow[shift right=3, color=blue, curve={height=30pt}, from=1-1, to=1-5]
    	\arrow[shift right, color={rgb,255:red,0;green,148;blue,0}, curve={height=30pt}, from=1-1, to=1-5]
    	\arrow[shift right=5, color=red, curve={height=30pt}, from=1-1, to=1-5]
    	\arrow[shift left, curve={height=30pt}, from=1-1, to=1-5]
    	\arrow[shift right, color={rgb,255:red,0;green,148;blue,0}, curve={height=30pt}, from=1-5, to=1-1]
    	\arrow[shift left, curve={height=30pt}, from=1-5, to=1-1]
    	\arrow[shift right=5, color=red, curve={height=30pt}, from=1-5, to=1-1]
    	\arrow[shift right=3, color=blue, curve={height=30pt}, from=1-5, to=1-1]
    	\arrow[from=1-5, to=1-5, loop, in=330, out=30, distance=5mm]
    	\arrow[shift right=2, color=blue, from=1-5, to=1-5, loop, in=320, out=40, distance=15mm]
        \end{tikzcd}
        \caption{}
        \label{fig:z/2 times z/2}
    \end{subfigure}
    \hspace{1.5cm}
    \begin{subfigure}{0.4\textwidth}
        \centering
        \begin{tikzcd}
    	&& E \\
    	\\
    	\\
    	\\
    	C &&&& D
    	\arrow[from=1-3, to=1-3, loop, in=55, out=125, distance=10mm]
    	\arrow[shift left=3, draw=blue, from=1-3, to=5-5]
    	\arrow[shift left=5, draw=red, from=1-3, to=5-5]
    	\arrow[shift left, from=1-3, to=5-5]
    	\arrow[shift left=3, draw=blue, from=5-1, to=1-3]
    	\arrow[shift left=5, draw=red, from=5-1, to=1-3]
    	\arrow[shift left, from=5-1, to=1-3]
    	\arrow[from=5-1, to=5-1, loop, in=190, out=260, distance=10mm]
    	\arrow[shift right=3, draw=blue, from=5-1, to=5-5]
    	\arrow[shift right, from=5-1, to=5-5]
    	\arrow[shift right=5, draw=red, from=5-1, to=5-5]
    	\arrow[from=5-5, to=5-5, loop, in=280, out=350, distance=10mm]
        \end{tikzcd}
        \caption{}
        \label{fig: z/3}
    \end{subfigure}
    \caption{(A) The category in Example \ref{ex: z2 times z2}. (B) The category in Example \ref{ex: not 3-concentration}.}
\end{figure}

\begin{example} \label{ex: not 3-concentration}
    All the concentration structures in the previous examples are $3$-concentration structures. Next, we give an example of a concentration structure that is not a $3$-concentration structure. Consider the category shown in Figure \ref{fig: z/3}, which contains three objects $C, D$ and $E$. For $i,j \in \{C,D,E\}$, the morphism from $i$ to $j$ are of the form $x_i^j$, where $x=r,b,d$ denotes red, blue and black, respectively. The compositions are defined as follows, whenever the composition exists.
    \begin{align*}
        &d_j^k\circ d_i^j = b_j^k \circ r_i^j = r_j^k \circ b_i^j=d_i^k \\
        &b_j^k \circ b_i^j = r_j^k \circ d_i^j = d_j^k \circ r_i^j=r_i^k\\
        &r_j^k \circ r_i^j = b_j^k \circ d_i^j = d_j^k \circ b_i^j=b_i^k
    \end{align*}
    
    We again consider the equivalence relation obtained by identifying morphisms of the same color. Notice that the three red arrows are not $3$-composable, so this relation is not a $3$-concentration. However, it is a concentration, and its concentration monoid is $\mathbb{Z}/3$.
\end{example}

\section{Concentrations and semidirect products} \label{sec: Concentrations and semidirect products}

In this section we define semidirect products of categories with concentration, and study their relation to semidirect products of monoids.  

We start with some notations we will use later. Given a category $\mathcal{C}$, let $\textup{End}(\mathcal{C})$ denote the monoid of functors from $\mathcal{C}$ to itself, and let $\mathcal{E}nd(\C)$ denote the category witha a single object $*$ and $\textup{Mor}(*,*) = \textup{End}(\mathcal{C})$. Similarly, let $\textup{Aut}(\mathcal{C})$ denote the group of strongly invertible functors from $\mathcal{C}$ to itself, and let $\mathcal{A}ut(\C)$ denote the category with a single object $*$ and $\textup{Mor}(*,*) = \textup{Aut}(\mathcal{C})$.

\subsection{Semidirect products of categories} \label{sec: Semidirect products of categories}

The semidirect product of categories was defined in \cite{MR1715405}, using partial full endofunctors. Our definition is slightly different: we use endofunctors and follow a more direct analogue of the classical semidirect product in group theory. To the best of our knowledge, this exact formulation has not previously appeared in the literature.

\begin{definition} \label{def: semi product of two categories}
    Let $\C$, $\D$ be two categories, and let $\Phi:\D \rightarrow\mathcal{E}nd(\C)$ be a functor. The \textbf{semidirect product} of $\C$ and $\D$, denoted $\C \rtimes_{\Phi}\D$, is a category defined as follows:
    \begin{itemize}
        \item $\textup{Ob}( \C \rtimes_{\Phi}\D)=\textup{Ob}(\C)\times \textup{Ob}(\D)$.
        \item $\textup{Mor}((C_1,D_1),(C_2,D_2))=\{(\alpha,f): f\in \textup{Mor}(D_1,D_2),\alpha\in \textup{Mor}(\Phi_f(C_1),C_2))$, where $\Phi_f(C_1)=\Phi(f)(C_1)$.
        \item Composition is given by  $(\alpha_2,f_2)\circ(\alpha_1,f_1)=(\alpha_2\circ\Phi_{f_2}(\alpha_1),f_2\circ f_1)$.
    \end{itemize}  
\end{definition}

We provide a brief justification to confirm that $\C \rtimes_{\Phi}\D$ is a well-defined category.

\begin{lemma}
   $\C \rtimes_{\Phi}\D$ is a well-defined category.
\end{lemma}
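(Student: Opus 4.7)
The plan is to check the three standard axioms of a category: that composition has the correct source and target, that each object admits a two-sided identity morphism, and that composition is associative. Throughout, the key inputs are the functoriality of $\Phi:\mathcal{D}\to\mathcal{E}nd(\mathcal{C})$, which gives $\Phi_{id_D}=id_{\mathcal{C}}$ and $\Phi_{f_2\circ f_1}=\Phi_{f_2}\circ\Phi_{f_1}$, and the functoriality of each $\Phi_f$ as an endofunctor of $\mathcal{C}$, which gives $\Phi_f(\beta\circ\alpha)=\Phi_f(\beta)\circ\Phi_f(\alpha)$ and $\Phi_f(id_C)=id_{\Phi_f(C)}$.

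First, I would verify that the prescribed composition lands in the correct hom-set. Given composable $(\alpha_1,f_1):(C_1,D_1)\to(C_2,D_2)$ and $(\alpha_2,f_2):(C_2,D_2)\to(C_3,D_3)$, we have $\alpha_1\in\textup{Mor}(\Phi_{f_1}(C_1),C_2)$ and $\alpha_2\in\textup{Mor}(\Phi_{f_2}(C_2),C_3)$. Applying $\Phi_{f_2}$ yields $\Phi_{f_2}(\alpha_1)\in\textup{Mor}(\Phi_{f_2}\Phi_{f_1}(C_1),\Phi_{f_2}(C_2))$, and by functoriality of $\Phi$ we can rewrite the source as $\Phi_{f_2\circ f_1}(C_1)$. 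Thus $\alpha_2\circ\Phi_{f_2}(\alpha_1)$ is a legitimate morphism from $\Phi_{f_2\circ f_1}(C_1)$ to $C_3$, so $(\alpha_2\circ\Phi_{f_2}(\alpha_1),\,f_2\circ f_1)$ is indeed a morphism from $(C_1,D_1)$ to $(C_3,D_3)$.

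Next, I would propose $(id_C,id_D)$ as the identity on $(C,D)$. This lies in the correct hom-set because $\Phi_{id_D}=id_{\mathcal{C}}$ gives $\Phi_{id_D}(C)=C$. For a morphism $(\alpha,f):(C,D)\to(C',D')$, the right unit law computes as $(\alpha,f)\circ(id_C,id_D)=(\alpha\circ\Phi_f(id_C),\,f\circ id_D)=(\alpha\circ id_{\Phi_f(C)},f)=(\alpha,f)$, using functoriality of $\Phi_f$. The left unit law is immediate from $\Phi_{id_{D'}}=id_{\mathcal{C}}$.

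Finally, for associativity, given three composable morphisms $(\alpha_i,f_i)$, $i=1,2,3$, I would expand both $((\alpha_3,f_3)\circ(\alpha_2,f_2))\circ(\alpha_1,f_1)$ and $(\alpha_3,f_3)\circ((\alpha_2,f_2)\circ(\alpha_1,f_1))$. The second coordinates agree by associativity in $\mathcal{D}$. For the first coordinates one obtains $\alpha_3\circ\Phi_{f_3}(\alpha_2)\circ\Phi_{f_3\circ f_2}(\alpha_1)$ on one side, and $\alpha_3\circ\Phi_{f_3}\bigl(\alpha_2\circ\Phi_{f_2}(\alpha_1)\bigr)$ on the other; these match once one applies functoriality of $\Phi_{f_3}$ to split the composition and then functoriality of $\Phi$ to identify $\Phi_{f_3}\Phi_{f_2}$ with $\Phi_{f_3\circ f_2}$, followed by associativity of composition in $\mathcal{C}$. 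No step here should be a serious obstacle; the only mild bookkeeping concern is keeping track of which functoriality (of $\Phi$ versus of an individual $\Phi_f$) is being invoked at each substitution, and writing the verification so that the two coordinates are checked separately and transparently.
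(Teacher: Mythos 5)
Your proof is correct and follows essentially the same route as the paper: verifying that the composite $(\alpha_2\circ\Phi_{f_2}(\alpha_1),\,f_2\circ f_1)$ lands in $\textup{Mor}(\Phi_{f_2\circ f_1}(C_1),C_3)\times\textup{Mor}(D_1,D_3)$ via functoriality of $\Phi$, taking $(id_C,id_D)$ as the identity, and reducing associativity to the same bookkeeping as for semidirect products of groups. The only difference is that you write out the identity and associativity checks explicitly where the paper leaves them to the reader.
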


\begin{proof}
    We first check that the composition is well defined. Suppose that $(\alpha_1, f_1) \in \textup{Mor}((C_1, D_1), (C_2, D_2))$ and $(\alpha_2, f_2) \in \textup{Mor}((C_2, D_2), (C_3, D_3))$, we want to show $(\alpha_2\circ\Phi_{f_2}(\alpha_1),f_2\circ f_1)\in \textup{Mor}((C_1,D_1),(C_3,D_3))$, that is, $\alpha_2\circ\Phi_{f_2}(\alpha_1) \in \text{Mor}(\Phi_{f_2 \circ f_1}(C_1), C_3)$ and $f_2 \circ f_1 \in \text{Mor}(D_1,D_3)$.
    
    By definition, $\alpha_{1}\in \text{Mor}(\Phi_{f_1}(C_1),C_2)$ and $\alpha_2 \in \textup{Mor}(\Phi_{f_2}(C_2), C_3)$. Since $\Phi_{f_2}(\alpha_1) \in \textup{Mor}(\Phi_{f_2}(\Phi_{f_1}(C_1)), \Phi_{f_2}(C_2))$, it follows that $\alpha_2\circ\Phi_{f_2}(\alpha_1)$ is a morphism from $\Phi_{f_2 \circ f_1}(C_1)$ to $C_3$. Moreover, by definition $f_1 \in \textup{Mor}(D_1, D_2)$ and $f_2 \in \textup{Mor}(D_2, D_3)$, so $f_2 \circ f_1$ is a morphism from $D_1$ to $D_3$.
    
    Associativity of the composition follows from a similar argument as the associativity of multiplication in the semidirect product of groups.
    
    It is easy to check that the identity morphism $id_{(C, D)}$ is given by $(id_C, id_D)$.
\end{proof}

\subsection{Semidirect products with concentration structures}

In this subsection, we extend the semidirect product construction to categories equipped with concentration structures.

\begin{definition}
    Let $(\C,\sim_\C)$,and $(\D,\sim_\D)$ be two categories with concentrations, and let $\Phi:\D \rightarrow\mathcal{A}ut(\C)$ be a functor \textbf{compatible} with the two concentrations, meaning that whenever $\alpha \sim_\C \alpha'$ and $f\sim_\D f'$, we have $\Phi_f(\alpha)\sim_\C \Phi_{f'}(\alpha')$. The \textbf{semidirect product} of $(\C,\sim_\C)$ and $(\D,\sim_\D)$ is defined to be the category $\C \rtimes_{\Phi}\D$ in Definition \ref{def: semi product of two categories}, with the concentration $\sim_{ \rtimes_{\Phi}}$ given by $(\alpha, f) \sim_{ \rtimes_{\Phi}} (\alpha', f')$ if and only if $\alpha \sim_{\mathcal{C}} \alpha'$ and $f \sim_{\mathcal{D}} f'$.
\end{definition}

\begin{lemma}
    $\sim_{ \rtimes_{\Phi}}$ is indeed a concentration structure on $\C \rtimes_{\Phi}\D$.
\end{lemma}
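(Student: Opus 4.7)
The plan is to verify the four axioms of Definition~\ref{def: concentration structure} directly for $\sim_{\rtimes_\Phi}$, after first noting that the componentwise conjunction of two equivalence relations is itself an equivalence relation on $\textup{Mor}_{\C\rtimes_\Phi\D}$.

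The Identity and Composition axioms are essentially bookkeeping. The Identity axiom reduces to the corresponding axioms for $\sim_\C$ and $\sim_\D$ applied to the two components of $id_{(C,D)}=(id_C,id_D)$. For the Composition axiom, I would split the required equivalence componentwise: the $\D$-component is immediate, while the $\C$-component combines compatibility of $\Phi$ (giving $\Phi_{f_2}(\alpha_1)\sim_\C \Phi_{f_2'}(\alpha_1')$ from $\alpha_1\sim_\C\alpha_1'$ and $f_2\sim_\D f_2'$) with the composition axiom of $\sim_\C$ applied to the outer composition with $\alpha_2$ and $\alpha_2'$.

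The 2-Existence axiom is where the hypothesis $\Phi:\D\to\mathcal{A}ut(\C)$ first plays a nontrivial role. Given $(\alpha_1,f_1)$ and $(\alpha_2,f_2)$, I would first apply 2-existence in $\D$ to obtain $f_1'\sim_\D f_1$ and $f_2'\sim_\D f_2$ with $f_2'\circ f_1'$ defined. To align the $\C$-components so that the resulting pairs are composable, I would apply 2-existence in $\C$ to $\alpha_1$ and $(\Phi_{f_2'})^{-1}(\alpha_2)$, which is well defined because $\Phi_{f_2'}$ is strongly invertible. This yields $\alpha_1'\sim_\C\alpha_1$ and $\beta\sim_\C(\Phi_{f_2'})^{-1}(\alpha_2)$ with $\beta\circ\alpha_1'$ defined; setting $\alpha_2':=\Phi_{f_2'}(\beta)$, compatibility gives $\alpha_2'\sim_\C\alpha_2$, and tracking sources and targets through $\Phi_{f_2'}$ confirms that $(\alpha_2',f_2')\circ(\alpha_1',f_1')$ is defined in $\C\rtimes_\Phi\D$.

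The Associativity axiom is the main obstacle. The $\D$-component follows directly from associativity of $\sim_\D$, so all the work lies in the $\C$-component: writing the given data componentwise, the goal reduces to showing
\[
\alpha'\circ\Phi_{f'}(\nu)\;\sim_\C\;\mu\circ\Phi_m(\gamma).
\]
The strategy is to apply the associativity axiom of $\sim_\C$ to the triples $(\alpha,\Phi_f(\beta),\Phi_m(\gamma))$ and $(\alpha',\Phi_{f'}(\beta'),\Phi_{f'}(\Phi_{g'}(\gamma')))$. Most of the required equivalences are immediate from the given hypotheses together with compatibility of $\Phi$ applied to the pairs $(f,f')$, $(g,g')$, and $(\beta,\beta')$; the equivalence $\Phi_{f'}(\nu)\sim_\C\Phi_{f'}(\beta')\circ\Phi_{f'}(\Phi_{g'}(\gamma'))$ comes from functoriality of $\Phi_{f'}$ applied to $\nu\sim_\C\beta'\circ\Phi_{g'}(\gamma')$. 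The delicate one is $\Phi_m(\gamma)\sim_\C\Phi_{f'}(\Phi_{g'}(\gamma'))$: here one must factor through $\Phi_f(\Phi_g(\gamma))$ using $m\sim_\D f\circ g$, functoriality of $\Phi$, and two applications of compatibility, rather than through $\Phi_{f'\circ g'}(\gamma')$, since $f'\circ g'$ is not assumed to exist. The existence hypotheses on the two composites in $\C\rtimes_\Phi\D$ furnish the existence of all compositions in $\C$ needed to invoke associativity of $\sim_\C$.
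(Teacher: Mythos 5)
Your proof is correct and follows essentially the same route as the paper for the identity, composition, and 2-existence axioms; in particular, your device of pulling $\alpha_2$ back through $(\Phi_{f_2'})^{-1}$, applying 2-existence in $\C$, and then pushing forward again is exactly the paper's construction for Axiom 3. For the associativity axiom the paper merely asserts that it ``directly follows from the definition,'' whereas you supply the actual argument --- reducing to the associativity axiom of $\sim_\C$ applied to the triple $(\alpha,\Phi_f(\beta),\Phi_m(\gamma))$ and using $m\sim_\D f\circ g$ together with functoriality and compatibility of $\Phi$ to obtain $\Phi_m(\gamma)\sim_\C\Phi_{f'}(\Phi_{g'}(\gamma'))$ --- which is a genuine improvement, since that step is not immediate from the definition alone.
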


\begin{proof}
    It is obvious that $\sim_{ \rtimes_{\Phi}}$ is an equivalence relation on $\text{Mor}(\C \rtimes_{\Phi}\D)$, since both $\sim_\C$ and $\sim_\D$ are equivalence relations. We now check the four axioms of concentration. 
    
    \textbf{Axiom 1.} Let $(id_{C_1},id_{D_1})$ and $(id_{C_2},id_{D_2})$ be identity morphisms. Then $id_{C_1} \sim_\C id_{C_2}$ and $id_{D_1} \sim_\D id_{D_2}$ implies $(id_{C_1},id_{D_1}) \sim_{\rtimes_\Phi} (id_{C_2},id_{D_2})$.

    \textbf{Axiom 2.} Suppose $(\alpha,f)\sim_{\rtimes\Phi}(\alpha',f')$ and $(\beta,g)\sim_{\rtimes\Phi}(\beta',g')$ and both $(\alpha,f)\circ(\beta,g)$, $(\alpha',f')\circ(\beta',g')$ exist. Then \begin{align*}
        (\alpha,f)\circ(\beta,g)&=(\alpha\circ \Phi_g(\beta),f\circ g) \\
        &\sim_{\rtimes\Phi} (\alpha'\circ \Phi_{g'}(\beta'),f'\circ g' ) \\
        &=(\alpha',f')\circ(\beta',g')
    \end{align*}
    where $\Phi_g(\beta) \sim_\C \Phi_{g'}(\beta')$ is from the compatibility of $\Phi$.

    \textbf{Axiom 3.} Let $(\alpha,f),(\beta,g) \in \text{Mor}(\C \rtimes_{\Phi}\D)$. We can first find $f', g'\in \text{Mor}(\D)$ such that $f\sim_\D f'$, $g \sim_\D g'$ and $f'\circ g'$ exists. Then we find $\alpha'', \beta' \in \text{Mor}(\C)$ such that $\alpha'' \sim_\C\Phi_{f'}^{-1}(\alpha)$, $\beta' \sim_C \beta$ and $\alpha'' \circ \beta'$ exists. Let $\alpha' = \Phi_{f'}(\alpha'')$.

    Next we show that $(\alpha',f') \sim_{\rtimes\Phi} (\alpha,f)$, $(\beta',g')\sim_{\rtimes\Phi} (\beta,g)$ such that $(\alpha',f') \circ (\beta',g')$ exists. Note that $(\beta',g')\sim_{\rtimes\Phi} (\beta,g)$ directly follows from the definition of $\sim_{\rtimes\Phi}$. To show that $(\alpha',f') \sim_{\rtimes\Phi} (\alpha,f)$, we just need to check $\alpha' \sim_\C \alpha$. Indeed, since $\Phi$ is compatible with the concentration structures, we have $\alpha' = \Phi_{f'}(\alpha'') \sim_\C \Phi_{f'}(\Phi_{f'}^{-1}(\alpha)) = \alpha$.

    Now the existence of the composition follows from the following.
    \begin{align*}
        (\alpha',f') \circ (\beta',g')&= (\alpha' \circ \Phi_{f'} (\beta'), f'\circ g')\\
        &= (\Phi_{f'}(\alpha'')\circ \Phi_{f'}(\beta'), f'\circ g') \\
        &= (\Phi_{f'}(\alpha'' \circ \beta'), f'\circ g'),
    \end{align*}

    \textbf{Axiom 4.} The associativity axiom directly follows from the definition of $\sim_{\rtimes\Phi}$.

\end{proof}

\subsection{Concentration monoids of semidirect products}
\label{sec: stabiliztion monoids and semidirect products of monoids}

Let $(\C,\sim_\C)$and $(\D,\sim_\D)$ be two categories with concentrations, let $M_{(\C,\sim_\C)}$ and $M_{(\D,\sim_\D)}$ be their concentration monoids. Given a functor $\Phi: \D \rightarrow\mathcal{A}ut(\C)$ compatible with the concentration structures of $\C$ and $\D$, the functor $\Phi$ induces a well-defined homomorphism $\phi: M_{(\D, \sim_{\D})} \to \textup{Aut}(M_{(\C, \sim_\C)})$ such that $(\phi[f])[\alpha] = [\Phi_f(\alpha)]$. Its well-definedness comes from the compatibility condition of $\Phi$.

Recall that given two monoids $M$ and $N$ and a homomorphism $\phi: N \to \textup{Aut}(M)$, the semidirect product $M \rtimes_{\phi} N$ is defined to be $M \times N$ as a set, with multiplication $(m_1, n_1) (m_2, n_2) = (m_1 \phi_{n_1}(m_2), n_1 n_2)$. 


We are now ready to prove the final part of Theorem \ref{thm: concentration restrict to sub,quotient, and semidirect product}, showing that the concentration monoid functor $\mathbf{M}$ preserves semidirect products.

\begin{proposition} \label{prop: concentration of semidirect product is semidirect product of concentration}
    $M_{(\C \rtimes_{\Phi}\D, \sim_{\rtimes_{\Phi}})} \cong M_{(\C, \sim_\C)} \rtimes_{\phi} M_{(\D, \sim_\D)}$
\end{proposition}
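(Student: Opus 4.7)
The plan is to exhibit an explicit bijection and verify it is a monoid homomorphism. Define
$$\Psi:M_{(\C\rtimes_{\Phi}\D,\,\sim_{\rtimes_{\Phi}})}\longrightarrow M_{(\C,\sim_\C)}\rtimes_{\phi}M_{(\D,\sim_\D)},\qquad \Psi[(\alpha,f)]=([\alpha],[f]).$$
Well-definedness is immediate from the definition of $\sim_{\rtimes_{\Phi}}$: if $(\alpha,f)\sim_{\rtimes_{\Phi}}(\alpha',f')$, then $\alpha\sim_\C\alpha'$ and $f\sim_\D f'$, so $[\alpha]=[\alpha']$ and $[f]=[f']$. Injectivity of $\Psi$ is the converse, again built into the definition. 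For surjectivity, given a pair $([\alpha],[f])$ with $\alpha\in\textup{Mor}_\C$ and $f\in\textup{Mor}(D_1,D_2)$ with $\alpha:A\to B$, use that $\Phi_f\in\textup{Aut}(\C)$ to realize $(\alpha,f)$ as a genuine morphism of $\C\rtimes_\Phi\D$ from $(\Phi_f^{-1}(A),D_1)$ to $(B,D_2)$; its class maps to $([\alpha],[f])$.

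Next I would verify that $\Psi$ preserves multiplication. Given classes $[(\alpha_1,f_1)]$ and $[(\alpha_2,f_2)]$, by the definition of the concentration monoid product we can choose $(\alpha_i',f_i')\sim_{\rtimes_{\Phi}}(\alpha_i,f_i)$ such that $(\alpha_1',f_1')\circ(\alpha_2',f_2')$ exists, and then
$$\Psi\bigl([(\alpha_1,f_1)]\cdot[(\alpha_2,f_2)]\bigr)=\Psi[(\alpha_1'\circ\Phi_{f_1'}(\alpha_2'),\,f_1'\circ f_2')]=\bigl([\alpha_1'\circ\Phi_{f_1'}(\alpha_2')],\,[f_1'\circ f_2']\bigr).$$
On the target side, using the definition of $\phi$ recalled before the proposition,
$$\Psi[(\alpha_1,f_1)]\cdot\Psi[(\alpha_2,f_2)]=\bigl([\alpha_1]\cdot\phi_{[f_1]}([\alpha_2]),\,[f_1]\cdot[f_2]\bigr)=\bigl([\alpha_1]\cdot[\Phi_{f_1}(\alpha_2)],\,[f_1][f_2]\bigr).$$
The second coordinate matches because $f_1'\sim_\D f_1$, $f_2'\sim_\D f_2$, and $f_1'\circ f_2'$ exists, so by definition of the product in $M_{(\D,\sim_\D)}$ it equals $[f_1][f_2]$. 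For the first coordinate, compatibility of $\Phi$ with the concentration structures gives $\Phi_{f_1'}(\alpha_2')\sim_\C\Phi_{f_1}(\alpha_2)$, while $\alpha_1'\sim_\C\alpha_1$ and the composition $\alpha_1'\circ\Phi_{f_1'}(\alpha_2')$ exists; therefore $[\alpha_1'\circ\Phi_{f_1'}(\alpha_2')]=[\alpha_1]\cdot[\Phi_{f_1}(\alpha_2)]$ by the definition of the product in $M_{(\C,\sim_\C)}$.

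The main subtlety to watch out for is making sure the representatives used to compute the semidirect product multiplication on one side give the same class as the representatives that are implicitly chosen when computing $[\alpha_1]\cdot[\Phi_{f_1}(\alpha_2)]$ on the other side; this is where the compatibility hypothesis on $\Phi$ is essential and where the axioms of concentration (which guarantee that any choice of compatible representatives yields the same product) do the real work. Once this is dealt with, preservation of the identity $[(id_C,id_D)]\mapsto([id_C],[id_D])$ is automatic from the identity axiom, completing the proof that $\Psi$ is a monoid isomorphism.
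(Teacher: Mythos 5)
Your proposal is correct and follows essentially the same route as the paper: the same map $[(\alpha,f)]\mapsto([\alpha],[f])$, with well-definedness and bijectivity read off from the definition of $\sim_{\rtimes_{\Phi}}$, and the homomorphism property verified via the compatibility of $\Phi$ with the concentration structures. Your explicit justification of why the first coordinates agree is just a more detailed spelling-out of the step the paper performs when it rewrites $[\alpha'\circ\Phi_{f'}(\beta')]$ as $[\alpha][\Phi_{f}(\beta)]$.
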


\begin{proof}
    Consider the map $\theta: M_{(\C \rtimes_{\Phi}\D, \sim_{\rtimes_{\Phi}})} \to M_{(\C, \sim_\C)} \rtimes_{\phi} M_{(\D, \sim_\D)}$ sending $[\alpha,f]$ to $([\alpha], [f])$. We claim that $\theta$ is a well-defined isomorphism.

    First, $\theta$ is a well-defined bijection because $(\alpha, f) \sim_{\rtimes_{\Phi}} (\alpha', f')$ if and only if $\alpha \sim_\C \alpha'$ and $f \sim_\D f'$. Then we just need to check that $\theta$ is a homomorphism. Given $[\alpha, f], [\beta, g] \in M_{\C \rtimes_{\Phi}\D}$, suppose that $[\alpha, f][\beta, g] = [(\alpha', f') \circ (\beta', g')]$ where $\alpha \sim_\C \alpha'$, $\beta \sim_\C \beta'$ and $f \sim_\D f'$, $g \sim_\D g'$, then
    \begin{align*}
        \theta([\alpha, f][\beta, g])
        =& \theta[(\alpha', f') \circ (\beta', g')] \\
        =& \theta[\alpha' \circ \Phi_{f'}(\beta'), f' \circ g'] \\
        =& ([\alpha' \circ \Phi_{f'}(\beta')],[f' \circ g']) \\
        =& ([\alpha][\Phi_{f}(\beta)],[f][g]) \\
        =& ([\alpha](\phi[f][\beta]),[f][g]) \\
        =& ([\alpha], [f]) ([\beta], [g]) \\
        =& \theta[\alpha, f] \theta[\beta, g]
    \end{align*}
\end{proof}

\section{$G$-equivariant direct limits}\label{sec: Concentrations and $G$-equivariant direct limits}

In this section, we reinterpret direct limits of groups using concentration structures, then extend it to define $G$-equivariant direct limits for group $G$. Using the semidirect product construction in Section \ref{sec: Concentrations and semidirect products}, we show that such a $G$-equivariant direct limit decomposes naturally as a semidirect product of the original direct limit and the group $G$. As an application, we give an explicit construction of $\mathbb{R}$-braid groups as $\textup{Aut}^+(\mathbb{R})$-equivariant direct limits, where $\textup{Aut}^+(\mathbb{R})$ denotes the group of order preserving automorphisms of $\mathbb{R}$, viewed either as a set, a topological space or a smooth manifold.


\vspace{5mm}

We begin with a brief review of direct limits of groups. A directed set $(S, \leq)$ is a partially ordered set, in which for any $A,B\in S$ there exists $C\in S$ such that $A\leq C$ and $B \leq C$.

Then we consider the associated \textbf{direct category} $\mathcal{S}$, defined by $\textup{Ob}(\mathcal{S}) = S$ with a single morphism $\iota_A^B$ from $A$ to $B$ whenever $A \leq B$. The composition is given by $\iota_B^C \circ \iota_A^B = \iota_A^C$. 

Given a functor $F:\mathcal{S}\rightarrow \mathscr{G}rp$ from $\mathcal{S}$ to the category of groups, we naturally obtain a direct system $(\{F(A)\}_{A\in S}, \{F(\iota_A^B)\}_{A,B\in S})$ with directed set $(S,\leq)$. It is well known that the direct limit $\displaystyle\lim_{\longrightarrow} \{F(A)\}_{A \in S}$ coincides with the colimit $\displaystyle\lim_{\longrightarrow} F$.

\subsection{$G$-equivariant direct limit as a concentration monoid}

Recall that a group $G$ acting on a category $\C$ is specified as a group homeomorphism $\rho: G \rightarrow \textup{Aut}(\mathcal{C})$, where $\textup{Aut}(\mathcal{C})$ denotes the group of strongly invertible functors from $\mathcal{C}$ to itself. When the action has no ambiguity, we simply write $ \rho(f)(\_)$ as $f(\_)$ for the action of $f\in G$ on an object or a morphism of $\C$. A functor $F: \mathcal{C} \to \mathcal{D}$ is said to be \textbf{$G$-equivariant} if $F(f(C)) = f(F(C))$ for any object $C$ in $\mathcal{C}$, and $F(f(\alpha)) = g(F(\alpha))$ for any morphism $\alpha$ in $\mathcal{C}$.

Let $(S, \leq)$ be a directed set, and let $\mathcal{S}$ be its associated direct category. Suppose a group $G$ acts on $\mathcal{S}$. Note that the morphisms in $\mathcal{S}$ are determined by the partial order on the objects, so the action must satisfy $f(\iota_A^B)=\iota_{f(A)}^{f(B)}$ for any $f\in G$ and any $A \leq B$. 


In the next two definitions, we construct a category with concentration that provides a reinterpretation of the direct limit in the presence of a $G$-action.

\begin{definition} \label{def: category S_G}
    Given a group action of $G$ on $\mathcal{S}$, and a $G$-equivariant functor $F: \mathcal{S} \to \mathscr{G}rp$ (where $G$ acts trivially on $\mathscr{G}rp$), define the category $\mathcal{S}_{G}$ as follows.
    \begin{itemize}
        \item $\textup{Ob}(\mathcal{S}_G) = \textup{Ob}(\mathcal{S})$.
        \item $\textup{Mor}(B, A) = \{(A, \alpha, f): \alpha \in F(A), f \in G, f(B) = A\}$.
        \item Composition is given by $(A, \alpha, f) \circ (B, \beta, g)  = (A, \alpha\beta, fg)$. Here $\alpha\beta$ is well defined because $F$ acts $G$-equivariantly on the objects, i.e. $F(A) = F(f(B)) = F(B)$. 
    \end{itemize}

    The identity morphism $id_{A}$ is given by $(A, e_{F(A)}, \epsilon)$, where $e_{F(A)}$ is the identity in $F(A)$ and $\epsilon$ is the identity in $G$. 
\end{definition}

\begin{remark}
    Each morphism $(A, \alpha, f)$ in $\mathcal{S}_G$ implicitly determines its source and target: the target is $A$, while the source is $f^{-1}(A)$. The element $\alpha$ belongs to $F(A)$, the group associated with the target object $A$.
\end{remark}

\begin{definition} \label{def: concentration on S_G}
    We define the concentration structure on $\mathcal{S}_G$ such that $(A, \alpha, f) \sim (B, \beta, g)$ if and only if
    \begin{enumerate}
        \item $(F(\iota_A^C))(\alpha) = (F(\iota_B^C))(\beta)$ for some $C \in \textup{Ob}(\mathcal{S})$.
        \item $f = g$.
    \end{enumerate}
\end{definition}

\begin{lemma}
    $\sim$ is indeed a concentration structure on $\mathcal{S}_G$.
\end{lemma}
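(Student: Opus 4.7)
The plan is to verify that $\sim$ satisfies the three axioms of a $3$-concentration structure (Definition \ref{def: n-concentration}) and then invoke Proposition \ref{prop: 3-concentration implies concentration} to conclude that $\sim$ is a genuine concentration structure; this bypasses a direct verification of the associativity axiom. Before checking the axioms, I would dispatch reflexivity and symmetry of $\sim$ directly from Definition \ref{def: concentration on S_G}. For transitivity, given witnesses $C_1, C_2$ for two consecutive relations, I would use the directedness of $S$ to pick $C \geq C_1, C_2$ and apply the group homomorphisms $F(\iota_{C_i}^C)$ to transport the defining equalities up to $F(C)$.

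The identity axiom is immediate: for any $A, B \in \textup{Ob}(\mathcal{S})$, choose any $C \geq A, B$ and observe that $F(\iota_A^C)$ and $F(\iota_B^C)$ both send the identity element to $e_{F(C)}$, while the $G$-components of $id_A$ and $id_B$ are both $\epsilon$. The $3$-existence axiom is essentially combinatorial: given morphisms $(A_i, \alpha_i, f_i)$ for $i = 1, 2, 3$, pick $A_1' \in S$ with $A_1' \geq A_1,\; f_1(A_2),\; (f_1 f_2)(A_3)$, and set $A_2' = f_1^{-1}(A_1')$ and $A_3' = (f_1 f_2)^{-1}(A_1')$. Since each element of $G$ acts on $\mathcal{S}$ as an order automorphism, one gets $A_i' \geq A_i$ for all $i$, and defining $\alpha_i' = F(\iota_{A_i}^{A_i'})(\alpha_i)$ produces the needed equivalent representatives for which $f_1(A_2') = A_1'$ and $f_2(A_3') = A_2'$, so the triple composition is defined.

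The main obstacle is the composition axiom. Assume $(A, \alpha, f) \sim (A', \alpha', f)$ with witness $C_1$ and $(B, \beta, g) \sim (B', \beta', g)$ with witness $C_2$, and that both compositions exist, meaning $f(B) = A$ and $f(B') = A'$. The subtle issue is that to compare $\alpha\beta \in F(A)$ and $\alpha'\beta' \in F(A')$ at a common $C$, one must express $F(\iota_A^C)(\beta)$ in a form involving $\iota_B^{(-)}$ rather than $\iota_A^{(-)}$, since $\beta$ originally lives in $F(B)$ and the two homomorphisms $F(\iota_A^C)$ and $F(\iota_B^C)$ out of the group $F(A) = F(B)$ need not coincide for a generic $C$. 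To handle this I would pick $E \in S$ with $E \geq B,\; B',\; C_2,\; f^{-1}(C_1)$ and set $C = f(E)$. Then $G$-equivariance of $F$ applied to $\iota_B^E$ and $\iota_{B'}^E$ yields $F(\iota_A^C) = F(\iota_{f(B)}^{f(E)}) = F(\iota_B^E)$ and similarly $F(\iota_{A'}^C) = F(\iota_{B'}^E)$. Splitting $F(\iota_A^C)(\alpha \beta) = F(\iota_A^C)(\alpha) \cdot F(\iota_A^C)(\beta)$ and treating the two factors separately, the $\alpha$-factor equals $F(\iota_{A'}^C)(\alpha')$ via the witness $C_1 \leq C$, and the $\beta$-factor equals $F(\iota_{A'}^C)(\beta')$ via the identifications above together with the witness $C_2 \leq E$, completing the verification.
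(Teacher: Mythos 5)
Your proposal is correct and follows essentially the same route as the paper: verify the equivalence relation via directedness, check the three axioms of a $3$-concentration structure, and invoke Proposition \ref{prop: 3-concentration implies concentration}, with the composition axiom handled by using $G$-equivariance of $F$ to identify $F(\iota_{f(B)}^{f(E)})$ with $F(\iota_B^E)$. The only (immaterial) difference is that you choose the large comparison object on the source side and push it forward by $f$, whereas the paper chooses it on the target side and pulls back by $f^{-1}$.
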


\begin{proof}
    We first verify that $\sim$ is an equivalence relation. Its reflexivity and symmetry are obvious. For the transitivity, suppose that $(A, \alpha, f) \sim (B, \beta, f)$ and $(B, \beta, f) \sim (C, \gamma, f)$, then there exist $B',B'' \in \textup{Ob}(\mathcal{S})$ such that $(F(\iota_A^{B'}))(\alpha) = (F(\iota_B^{B'}))(\beta)$ and $(F(\iota_C^{B''}))(\gamma) = (F(\iota_B^{B''}))(\beta)$. Since $\textup{Ob}(\mathcal{S})$ is a directed set, we can choose an object $B'''$ with $B' \leq B'''$ and $B'' \leq B'''$. Then
    \begin{align*}
    (F(\iota_A^{B'''}))(\alpha)&= (F(\iota_{B'}^{B'''} \circ \iota_A^{B'}))(\alpha) \\
    & = (F(\iota_{B'}^{B'''} \circ\iota_B^{B'}))(\beta) \\
    &= (F(\iota_{B}^{B'''}))(\beta) \\
    &= (F(\iota_{B''}^{B'''} \circ\iota_B^{B''}))(\beta) \\
    &
    =(F(\iota_{B''}^{B'''} \circ \iota_C^{B''}))(\gamma) \\
    &=(F(\iota_C^{B'''}))(\gamma)
    \end{align*}


    Next we check $\sim$ satisfies the three axioms of $3$-concentration structure (Definition \ref{def: n-concentration}). Then it follows from Proposition \ref{prop: 3-concentration implies concentration} that $\sim$ is a concentration structure.

    \textbf{Axiom 1.} For any two objects $A, B$, choose $C \geq A, B$. Then $(F(\iota_A^C))(e_{F(A)})=e_{F(C)}=(F(\iota_B^C))(e_{F(B)})$, so $(A, e_{F(A)}, \epsilon) \sim (B, e_{F(B)}, \epsilon)$
    
    \textbf{Axiom 2.} Suppose $(A,\alpha,f)\sim(A',\alpha',f)$ and $(B,\beta,g)\sim(B',\beta',g)$ and assume both $(A,\alpha,f)\circ(B,\beta,g)$, $(A',\alpha',f')\circ(B',\beta',g')$ exist. Then there exist objects $C, D$ with $(F(\iota_A^C))(\alpha)=(F(\iota_{A'}^C))(\alpha')$ and $(F(\iota_B^D))(\beta)=(F(\iota_{B'}^D))(\beta')$. First choose $\overline{E}$ such that $\overline{E}\geq C$ and $\overline{E}\geq D$, then choose $E$ such that $E\geq f(\overline{E})$ and $E\geq \overline{E}$. Then we have
    \begin{align*}
        (F(\iota_A^E))(\alpha\beta)
        &=(F(\iota_A^E))(\alpha)(F(\iota_A^E))(\beta) \\ 
        &=(F(\iota_A^E))(\alpha)(F(\iota_{B}^{f^{-1}(E)}))(\beta) \\
        &= (F(\iota_C^E\circ \iota_A^C))(\alpha)(F(\iota_D^{f^{-1}(E)}\circ \iota_B^D))(\beta)\\
        &=(F(\iota_C^E\circ \iota_{A'}^C))(\alpha')(F(\iota_D^{f^{-1}(E)}\circ \iota_{B'}^D))(\beta')\\
        &=(F(\iota_{A'}^E))(\alpha')(F(\iota_{B'}^{f^{-1}(E)}))(\beta')\\
        &=(F(\iota_{A'}^E))(\alpha')(F(\iota_{A'}^E))(\beta') \\ 
        &= (F(\iota_{A'}^E))(\alpha'\beta')
    \end{align*}
    where $F(\iota_A^E) = F(\iota_{B}^{f^{-1}(E)})$ and $F(\iota_{A'}^E) = F(\iota_{B'}^{f^{-1}(E)})$ because $F$ acts $G$-equivariantly on morphisms and $f(B) = A, f(B') = A$. It follows that $(A,\alpha\beta,fg)\sim (A',\alpha'\beta',fg)$.

    \textbf{Axiom 3.} For any three morphisms $(A, \alpha,f),(B, \beta,g), (C,\gamma,h)$, choose an object $D$ such that $A \leq D$ and $f(B) \leq D$ and $f(g(C)) \leq D$. Then $(A, \alpha,f) \sim (D, F(\iota_A^D)(\alpha), f)$, which is a morphism from $f^{-1}(D)$ to $D$. Similarly, $(B, \beta, g)$ is equivalent to a morphism from $(fg)^{-1}(D)$ to $f^{-1}(D)$, and $(C, \gamma, h)$ is equivalent to a morphism from $(fgh)^{-1}(D)$ to $(fg)^{-1}(D)$. It ensures that the third axiom is satisfied.
    
\end{proof}

With the category $\mathcal{S}_G$ and its concentration structure, we can now define the $G$-equivariant direct limit.

\begin{definition} \label{def: G-equivariant direct limit}
    Given a direct system $(\{F(A)\}_{A \in S}, \{F(\iota_A^B)\}_{A, B \in S})$, a $G$-action on the direct category $\mathcal{S}$, and a $G$-equivariant functor $F: \mathcal{S} \to \mathscr{G}rp$, we define the \textbf{$G$-equivariant direct limit} $\displaystyle \lim_{\longrightarrow}{}^G F$ to be $ M_{(\mathcal{S}_G,\sim)}$, the concentration group of $(\mathcal{S}_G, \sim)$.
\end{definition}

The following proposition shows that our definition of the $G$-equivariant direct limit extends the classical direct limit, recovering it when $G$ is trivial.

\begin{proposition} \label{prop: trivial G-equivariant semidirect product is direct product}
    If $G$ acts trivially on $\mathcal{S}$, then $\displaystyle \lim_{\longrightarrow}{}^G F \cong \displaystyle \lim_{\longrightarrow} F \times G$. In particular, $\displaystyle \lim_{\longrightarrow}{}^0 F \cong \displaystyle \lim_{\longrightarrow} F$, where $0$ is the trivial group.
\end{proposition}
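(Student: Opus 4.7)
The plan is to construct an explicit group isomorphism $\theta: M_{(\mathcal{S}_G, \sim)} \to \displaystyle\lim_{\longrightarrow} F \times G$ given by $\theta[(A, \alpha, f)] = ([(A,\alpha)]_F, f)$, where $[(A,\alpha)]_F$ denotes the class of $\alpha \in F(A)$ in the direct limit $\displaystyle\lim_{\longrightarrow} F$.

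First I would observe that when $G$ acts trivially on $\mathcal{S}$, the condition $f(B) = A$ in Definition \ref{def: category S_G} forces $B = A$, so every morphism $(A, \alpha, f)$ in $\mathcal{S}_G$ is an endomorphism at $A$, and composition $(A,\alpha,f) \circ (A,\beta,g) = (A, \alpha\beta, fg)$ involves no change of object. To verify that $\theta$ is well defined and injective, I would recall that two elements $\alpha \in F(A)$ and $\beta \in F(B)$ represent the same class in $\displaystyle\lim_{\longrightarrow} F$ precisely when $F(\iota_A^C)(\alpha) = F(\iota_B^C)(\beta)$ for some $C \geq A, B$. Combined with the condition $f = g$, this matches exactly the definition of $\sim$ on $\mathcal{S}_G$ (Definition \ref{def: concentration on S_G}), so $\theta$ is well defined and injective; surjectivity is immediate from the definitions.

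The key step is to check that $\theta$ is a group homomorphism. To compute the product $[(A,\alpha,f)] \cdot [(B,\beta,g)]$ in $M_{(\mathcal{S}_G, \sim)}$, I would use the third concentration axiom to choose equivalent representatives that compose. Pick $C \in S$ with $A \leq C$ and $B \leq C$; then
$$
(A,\alpha,f) \sim (C, F(\iota_A^C)(\alpha), f), \qquad (B,\beta,g) \sim (C, F(\iota_B^C)(\beta), g),
$$
and these representatives compose (both being endomorphisms at $C$) to give $(C, F(\iota_A^C)(\alpha)\cdot F(\iota_B^C)(\beta), fg)$. Applying $\theta$ yields
$$
\bigl([C, F(\iota_A^C)(\alpha)\cdot F(\iota_B^C)(\beta)]_F,\; fg\bigr) = \bigl([(A,\alpha)]_F \cdot [(B,\beta)]_F,\; fg\bigr),
$$
where the first equality is the definition of multiplication in the direct limit of groups. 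This coincides with the componentwise product $\theta[(A,\alpha,f)] \cdot \theta[(B,\beta,g)]$ in $\displaystyle\lim_{\longrightarrow} F \times G$, proving that $\theta$ is a homomorphism, and hence an isomorphism.

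For the special case where $G$ is the trivial group, the factor $G$ collapses and the isomorphism reduces to $\displaystyle\lim_{\longrightarrow}{}^0 F \cong \displaystyle\lim_{\longrightarrow} F$. I do not expect any step to pose a serious obstacle; the only slightly delicate point is verifying that the multiplication on the concentration monoid, defined via representative-lifting, agrees with the telescoping definition of multiplication in the direct limit, but this is essentially tautological once both constructions are unfolded.
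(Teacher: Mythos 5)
Your proposal is correct and follows essentially the same route as the paper: the paper defines the same map $\psi[A,\alpha,f]=([A,\alpha],f)$, checks well-definedness and bijectivity by matching the two equivalence relations, and verifies the homomorphism property by lifting representatives to a common upper bound $C$, using the triviality of the action to identify $f^{-1}(C)$ with $C$. Your observation that all morphisms become endomorphisms under the trivial action is a slightly cleaner way to phrase the same computation.
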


\begin{proof}
    Recall $\displaystyle \lim_{\longrightarrow} F = \bigsqcup_{A \in S} \{A\} \times F(A) / \sim$, where $(A,\alpha) \sim (A', \alpha')$ if and only if $(F(\iota_A^B))(\alpha) = (F(\iota_{A'}^B))(\alpha')$ for some $B \in S$. Now consider the map $\psi: \displaystyle \lim_{\longrightarrow}{}^G F \to \displaystyle \lim_{\longrightarrow} F \times G$ sending $[A, \alpha, f]$ to $([A, \alpha], f)$, we want to show that $\psi$ is a well-defined isomorphism.

    
    First, $\psi$ is a well-defined bijection, following from that in $\displaystyle \lim_{\longrightarrow}{}^G F$, $(A, \alpha, f) \sim (A', \alpha', f')$ if and only if $(F(\iota_A^B))(\alpha) = (F(\iota_{A'}^B))(\alpha')$ for some $B \in \textup{Ob}(\mathcal{S}) = S$ and $f = f'$.
    
    Then we show that $\psi$ is a homomorphism. For any $[A, \alpha, f], [B, \beta, g] \in \displaystyle \lim_{\longrightarrow}{}^G F$, choose $C \in S$ such that $A \leq C$ and $f(B) \leq C$. Then
    \begin{align*}
        \psi([A, \alpha, f] [B, \beta, g]) =& \psi[(C, F(\iota_A^C)(\alpha), f) \circ (f^{-1}(C), F(\iota_B^{f^{-1}(C)})(\beta), g)]) \\
        =& \psi[C, F(\iota_A^C)(\alpha)F(\iota_B^{f^{-1}(C)})(\beta), fg] \\
        =& ([C, F(\iota_A^C)(\alpha)F(\iota_B^{f^{-1}(C)})(\beta))], fg) \\
        =& ([C, F(\iota_A^C)(\alpha)F(\iota_B^{C})(\beta))], fg) \quad (\textup{$f$ acts trivially on $\mathcal{S}$}) \\
        =& ([A, \alpha][B, \beta], fg) \\
        =& ([A, \alpha], f)([B, \beta], g)\\ =& \psi[A, \alpha, f] \psi[B, \beta, g]
    \end{align*}
\end{proof}

If $H$ is a subgroup of $G$, then the $G$-action on $\mathcal{S}$ naturally restricts to an $H$-action. In this case, $\mathcal{S}_H$ is a subcategory of $\mathcal{S}_G$, and the concentration structure on $\mathcal{S}_G$ restricts to the concentration structure on $\mathcal{S}_H$. This observation leads to the following lemma.

\begin{lemma}
    If $H$ is a subgroup of $G$, then $(\mathcal{S}_H, \sim)$ is a sub-concentration of $(\mathcal{S}_G, \sim)$.
\end{lemma}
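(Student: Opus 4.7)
The plan is to verify the two conditions in Definition \ref{def: sub-concentration}, namely that $\mathcal{S}_H$ sits inside $\mathcal{S}_G$ as a subcategory, and that the concentration relation restricts well. I will use the sufficient criterion from the lemma in Section \ref{subsec: sub and quotient concentrations}: it suffices to show that if a morphism of $\mathcal{S}_G$ is equivalent to a morphism of $\mathcal{S}_H$, then it already lies in $\mathcal{S}_H$.

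First I would check that $\mathcal{S}_H$ is a subcategory of $\mathcal{S}_G$. Both categories share the same object set $\textup{Ob}(\mathcal{S})$, and a morphism $(A,\alpha,f)$ of $\mathcal{S}_H$ is by construction also a morphism of $\mathcal{S}_G$ since $f \in H \subseteq G$. The identity $(A,e_{F(A)},\epsilon)$ lies in $\mathcal{S}_H$ because $\epsilon \in H$, and the composition formula $(A,\alpha,f)\circ(B,\beta,g) = (A,\alpha\beta,fg)$ is inherited from $\mathcal{S}_G$ and stays in $\mathcal{S}_H$ since $fg \in H$.

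Next I would verify the closure condition of the lemma. Suppose $(A,\alpha,f) \in \textup{Mor}(\mathcal{S}_H)$ and $(A,\alpha,f) \sim (B,\beta,g)$ in $\mathcal{S}_G$. By Definition \ref{def: concentration on S_G} this equivalence forces $f = g$, so $g \in H$, and hence $(B,\beta,g) \in \textup{Mor}(\mathcal{S}_H)$. The cited lemma then guarantees that $\sim$ is closed on $\mathcal{S}_H$, which is exactly the statement that $(\mathcal{S}_H,\sim)$ is a sub-concentration of $(\mathcal{S}_G,\sim)$.

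There is essentially no obstacle here; the argument is a direct unpacking of definitions. The only subtle point worth noting explicitly is that the requirement $f(B) = A$ appearing in Definition \ref{def: category S_G} is preserved when $f$ is restricted from $G$ to $H$, so the set of morphisms of $\mathcal{S}_H$ is genuinely a subset of the set of morphisms of $\mathcal{S}_G$ with the same source and target relations.
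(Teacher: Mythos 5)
Your proof is correct and fills in exactly the details the paper leaves as an observation: the key point, that Definition \ref{def: concentration on S_G} forces equality of the group element and hence equivalence classes never leave $\textup{Mor}(\mathcal{S}_H)$, combined with the sufficient criterion for closedness, is precisely the intended argument. Nothing is missing.
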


Together with Proposition \ref{prop: stabilizatin of sub concentration is submonoid}, we have that

\begin{proposition} \label{prop: subgroup equivariant direct limit is subgroup}
     If $H$ is a subgroup of $G$, then $\displaystyle \lim_{\longrightarrow}{}^H F$ is a subgroup of $\displaystyle \lim_{\longrightarrow}{}^G F$.
\end{proposition}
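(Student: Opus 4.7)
The overall strategy is to derive the statement by combining the preceding Lemma with Proposition \ref{prop: stabilizatin of sub concentration is submonoid}, and then upgrading the resulting submonoid relationship to a subgroup relationship. Concretely, once we know $(\mathcal{S}_H,\sim)$ is a sub-concentration of $(\mathcal{S}_G,\sim)$, Proposition \ref{prop: stabilizatin of sub concentration is submonoid} immediately yields that $M_{(\mathcal{S}_H,\sim)} = \displaystyle\lim_{\longrightarrow}{}^H F$ is a submonoid of $M_{(\mathcal{S}_G,\sim)} = \displaystyle\lim_{\longrightarrow}{}^G F$.

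The first step, then, is to supply a proof of the preceding Lemma. I would first verify that $\mathcal{S}_H$ is a subcategory of $\mathcal{S}_G$: the objects agree by Definition \ref{def: category S_G}, and the morphisms $(A,\alpha,h)$ with $h\in H$ are closed under composition because $H$ is closed under the group operation of $G$ (explicitly, $(A,\alpha,h_1)\circ(B,\beta,h_2) = (A,\alpha\beta,h_1h_2)$ with $h_1h_2\in H$), while each identity $(A,e_{F(A)},\epsilon)$ has third coordinate $\epsilon\in H$. Next I would check that $\sim$ is closed on $\mathcal{S}_H$ via the sufficient criterion in the unnamed lemma following Definition \ref{def: sub-concentration}: if $(A,\alpha,h)\in\textup{Mor}_{\mathcal{S}_H}$ and $(A,\alpha,h)\sim(B,\beta,g)$, then Definition \ref{def: concentration on S_G} forces $g=h\in H$, so $(B,\beta,g)\in\textup{Mor}_{\mathcal{S}_H}$. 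This establishes the Lemma.

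Finally, to upgrade the submonoid conclusion to a subgroup conclusion, I would note that $\mathcal{S}_G$ and $\mathcal{S}_H$ are in fact groupoids. Indeed, for any morphism $(A,\alpha,f)$ the element $(f^{-1}(A),\alpha^{-1},f^{-1})$ serves as its two-sided inverse; this makes sense because $G$-equivariance of $F$ combined with the trivial $G$-action on $\mathscr{G}rp$ gives $F(f^{-1}(A)) = F(A)$, so $\alpha^{-1}\in F(A) = F(f^{-1}(A))$, and a direct computation using Definition \ref{def: category S_G} confirms the two composites equal the identities. By Proposition \ref{prop: G is a monoid}, both $\displaystyle\lim_{\longrightarrow}{}^H F$ and $\displaystyle\lim_{\longrightarrow}{}^G F$ are therefore groups. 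A submonoid of a group that is itself a group under the inherited multiplication is automatically a subgroup (inverses in a group are unique), completing the proof.

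I do not anticipate a genuine obstacle here: the only substantive check is the closure of $\sim$ under restriction to $\mathcal{S}_H$, which is almost immediate from the fact, built into Definition \ref{def: concentration on S_G}, that the equivalence relation preserves the third coordinate. The rest is bookkeeping.
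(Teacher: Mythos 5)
Your proposal is correct and follows the same route as the paper: establish that $(\mathcal{S}_H,\sim)$ is a sub-concentration of $(\mathcal{S}_G,\sim)$ and then invoke Proposition \ref{prop: stabilizatin of sub concentration is submonoid}. You simply fill in details the paper leaves implicit (the verification of the preceding lemma via the closure criterion on the third coordinate, and the observation that $\mathcal{S}_G$ is a groupoid so that the submonoid conclusion upgrades to a subgroup), and these checks are all accurate.
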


\subsection{$G$-equivariant direct limit as a semidirect product}

In this subsection, we study how $G$-equivariant direct limits relate to semidirect products. First, we will show that the category $(\mathcal{S}_G, \sim)$ admits a natural decomposition as a semidirect product. It will allow us to express $G$-equivariant direct limit as a semidirect product of the ordinary direct limit with $G$.

\vspace{5mm}

Let $0$ denote the trivial group, and consider the category $\mathcal{S}_0$. In this case, $\textup{Mor}(A, B) = \{(A, \alpha, f): \alpha \in A, f\in 0, f(A)=B\}$, which is nonempty only when $A=B$. In particular, $\textup{Mor}(A, A) = \{(A, \alpha, \epsilon): \alpha \in F(A)\}$. For convenience, we simply denote $(A, \alpha, \epsilon)$ as $(A, \alpha)$. With this convention, $\displaystyle \lim_{\longrightarrow}{}^0 F$ and $\displaystyle \lim_{\longrightarrow} F$ are naturally identified via the isomorphism in Proposition \ref{prop: trivial G-equivariant semidirect product is direct product}.

\begin{example}
    Consider the directed set $S = \{C, D\}$ with $C \leq D$, and the functor $F: \mathcal{S} \to \mathscr{G}rp$ sending $C$ to $\mathbb{Z}/2$, $D$ to $\mathbb{Z}/4$ and sending $\iota_C^D$ to the multiplication by $2$. Then $(\mathcal{S}_0, \sim)$ is given by Example \ref{ex: Z/2 and Z/4 category with concentration}(1), where the computation shows that $\displaystyle \lim_{\longrightarrow}{}^0 F \cong \mathbb{Z}/2$.
\end{example}

Let $\mathcal{G}$ be the category containing a single object $*$ and $\textup{Mor}(*,*) = G$, with the discrete concentration structure. Given a $G$-action $\rho: G \to \textup{Aut}(\mathcal{S})$, we define a functor $\Phi: \mathcal{G} \to \mathcal{A}ut(\mathcal{S}_0)$, by $(\Phi(f))(A) =f(A)$ and $(\Phi(f))(A, \alpha) =(f(A),\alpha)$. Here we wrote $f(A)$ instead of $(\rho(f))(A)$ for simplicity.

It is natural to ask whether $\Phi$ is compatible with the concentrations. We give an affirmative answer to it.

\begin{lemma}
    $\Phi$ is compatible with the concentration structures on $\mathcal{S}_0$ and $\mathcal{G}$.
\end{lemma}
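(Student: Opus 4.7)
The plan is to exploit the discreteness of the concentration on $\mathcal{G}$ to reduce the statement to a single condition, and then use $G$-equivariance of $F$ to transport the defining equality of $\sim$ on $\mathcal{S}_0$ under the action of $f$.

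First I would observe that since $\mathcal{G}$ carries the discrete concentration, the only pairs $f \sim_{\mathcal{G}} f'$ are those with $f = f'$. Therefore compatibility of $\Phi$ reduces to showing that for each $f \in G$, the functor $\Phi_f \in \textup{Aut}(\mathcal{S}_0)$ preserves $\sim$ on morphisms; equivalently, whenever $(A, \alpha) \sim (B, \beta)$ in $\mathcal{S}_0$, we have $(f(A), \alpha) \sim (f(B), \beta)$.

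Next I would unpack the hypothesis: by Definition \ref{def: concentration on S_G} (applied in the trivial-group case $G = 0$) there exists some $C \in \textup{Ob}(\mathcal{S})$ with $F(\iota_A^C)(\alpha) = F(\iota_B^C)(\beta)$. The natural candidate to witness $(f(A), \alpha) \sim (f(B), \beta)$ is $C' := f(C)$. Since the $G$-action on $\mathcal{S}$ is by automorphisms and $A \le C$, $B \le C$, we have $f(A) \le f(C)$ and $f(B) \le f(C)$, with $f(\iota_A^C) = \iota_{f(A)}^{f(C)}$ and $f(\iota_B^C) = \iota_{f(B)}^{f(C)}$.

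The key step is then to apply $G$-equivariance of $F$ together with the fact that $G$ acts trivially on $\mathscr{G}rp$. This gives $F(\iota_{f(A)}^{f(C)}) = F(f(\iota_A^C)) = f(F(\iota_A^C)) = F(\iota_A^C)$ as group homomorphisms $F(A) \to F(C)$ (noting $F(f(A)) = f(F(A)) = F(A)$ and similarly for $C$), and likewise $F(\iota_{f(B)}^{f(C)}) = F(\iota_B^C)$. Consequently
\[
F(\iota_{f(A)}^{C'})(\alpha) = F(\iota_A^C)(\alpha) = F(\iota_B^C)(\beta) = F(\iota_{f(B)}^{C'})(\beta),
\]
which is exactly the condition for $(f(A), \alpha) \sim (f(B), \beta)$. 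This completes the verification of compatibility. I do not expect a genuine obstacle here; the only subtlety is to keep careful track of the two uses of $G$-equivariance (on objects and on morphisms) and the triviality of the $G$-action on $\mathscr{G}rp$, which together collapse $F$ on the translated morphism to $F$ on the original morphism.
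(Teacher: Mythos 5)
Your proposal is correct and follows essentially the same route as the paper: discreteness of $\sim_{\mathcal{G}}$ reduces compatibility to showing each $\Phi_f$ preserves $\sim$ on $\mathcal{S}_0$, and then the witness $f(C)$ together with $G$-equivariance of $F$ (and triviality of the action on $\mathscr{G}rp$) gives $F(\iota_{f(A)}^{f(C)}) = F(\iota_A^C)$, exactly as in the paper. No gaps.
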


\begin{proof}
    The concentration structure on $\mathcal{G}$ is discrete, so it suffices to show that if $(A, \alpha) \sim (A', \alpha')$, then $(f(A), \alpha) \sim (f(A'), \alpha')$ for any $f \in G$.

    Indeed, $(A, \alpha) \sim (A', \alpha')$ implies that $(F(\iota_A^B))(\alpha) = (F(\iota_{A'}^B))(\alpha')$ for some $B$. Then $(F(\iota_{f(A)}^{f(B)}))(\alpha) = F(\iota_A^B)(\alpha) = (F(\iota_{A'}^B))(\alpha') = (F(\iota_{f(A')}^{f(B)}))(\alpha')$, so $(f(A), \alpha) \sim (f(A'), \alpha')$.
\end{proof}


Now we can take the semidirect product of $(\mathcal{S}_0,\sim)$ and $(\mathcal{G},\sim)$. The next theorem shows that this semidirect product is isomorphic to $(\mathcal{S}_G, \sim)$.

\begin{theorem} \label{thm: S_G is the semidirect product of S_0 and G}
    $(\mathcal{S}_G,\sim) \cong (\mathcal{S}_0,\sim) \rtimes_{\Phi} (\mathcal{G},\sim)$ as categories with concentration.
\end{theorem}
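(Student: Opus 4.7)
The plan is to exhibit an explicit concentration isomorphism $\Psi\colon (\mathcal{S}_G,\sim) \to (\mathcal{S}_0,\sim) \rtimes_{\Phi} (\mathcal{G},\sim)$. On objects, set $\Psi(A) = (A,*)$. On morphisms, send a morphism $(A,\alpha,f)\colon f^{-1}(A)\to A$ in $\mathcal{S}_G$ to $((A,\alpha),f)$. First I would check that the target is a legitimate morphism in the semidirect product: since $\Phi_f(f^{-1}(A)) = f(f^{-1}(A)) = A$, the pair $(A,\alpha)$ is an endomorphism of $A$ in $\mathcal{S}_0$ (living in $F(A)$), and $f\in G = \textup{Mor}_{\mathcal{G}}(*,*)$, so $((A,\alpha),f)$ is indeed a morphism $(f^{-1}(A),*)\to(A,*)$.

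Next I would verify functoriality. For identities, $\Psi(id_A) = \Psi(A,e_{F(A)},\epsilon) = ((A,e_{F(A)}),\epsilon)$, which is the identity at $(A,*)$ in the semidirect product. For composition, if $(A,\alpha,f)\circ(B,\beta,g)$ is defined in $\mathcal{S}_G$ (forcing $f(B)=A$), then by Definition \ref{def: semi product of two categories},
\begin{align*}
\Psi(A,\alpha,f)\circ\Psi(B,\beta,g)
&= \bigl((A,\alpha)\circ \Phi_f(B,\beta),\,fg\bigr) \\
&= \bigl((A,\alpha)\circ (f(B),\beta),\,fg\bigr) \\
&= \bigl((A,\alpha)\circ (A,\beta),\,fg\bigr) \\
&= \bigl((A,\alpha\beta),\,fg\bigr) \\
&= \Psi(A,\alpha\beta,fg),
\end{align*}
which agrees with $\Psi$ of the composition in $\mathcal{S}_G$. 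Bijectivity (and therefore existence of a strong inverse $\Psi^{-1}$) is immediate: from any morphism $((A,\alpha),f)$ in the semidirect product, the source object is forced to be $f^{-1}(A)$, so the preimage $(A,\alpha,f)$ is unique.

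Finally I would check that $\Psi$ and $\Psi^{-1}$ are both concentration preserving. By Definition \ref{def: concentration on S_G}, $(A,\alpha,f)\sim(B,\beta,g)$ in $\mathcal{S}_G$ if and only if $f=g$ and $F(\iota_A^C)(\alpha) = F(\iota_B^C)(\beta)$ for some $C$. Specializing the same definition to the trivial group shows that $(A,\alpha)\sim(B,\beta)$ in $\mathcal{S}_0$ exactly encodes the second clause, and $f=g$ in $\mathcal{G}$ under $\sim_{dis}$ encodes the first. Combining these gives precisely the relation $\sim_{\rtimes_\Phi}$ on the semidirect product, so the two concentration structures match under $\Psi$. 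The main obstacle is purely bookkeeping: making sure that source/target objects line up correctly under $\Psi$ and that the $G$-equivariance of $F$ is used where needed (notably in identifying $\Phi_f(B,\beta)$ with $(A,\beta)$ when $f(B)=A$). No deep step is required beyond carefully unpacking the three definitions.
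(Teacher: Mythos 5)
Your proposal is correct and takes essentially the same approach as the paper: the paper defines the same bijective functor (merely in the opposite direction, from $\mathcal{S}_0 \rtimes_{\Phi} \mathcal{G}$ to $\mathcal{S}_G$), checks it preserves identities and compositions via the identity $\Phi_f(B,\beta)=(f(B),\beta)=(A,\beta)$, and matches the two concentration structures exactly as you do. Since a concentration isomorphism must be checked in both directions anyway, the choice of direction is immaterial.
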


\begin{proof}

    Note that the only morphisms in $\mathcal{S}_0$ are $\textup{Mor}(A,A) = \{(A, \alpha): \alpha \in F(A)\}$, so the morphisms in $\mathcal{S}_0 \rtimes_{\Phi} \mathcal{G}$ are given by $\textup{Mor}((B,*),(A,*))=\{((A,\alpha),f): f\in G,\alpha\in F(A), f(B)=A\} $.
    
    Consider the functor $\Psi:\mathcal{S}_0 \rtimes_{\Phi} \G \rightarrow \mathcal{S}_G$ sending object $(A,*)$ to $A$ and morphism  $((A,\alpha),f)$ to $(A,\alpha,f)$. We want to show that $\Psi$ is a concentration isomorphism.

    Rephrasing the definition of concentration isomorphism in Definition \ref{def: concentration preserving functor}, we only need to check the following.
    \begin{enumerate}
        \item $\Psi$ is a bijective functor.
        \item $((A,\alpha),f) \sim_{\rtimes_{\Phi}}((A',\alpha'),f')$ if and only if $(A,\alpha,f) \sim (A',\alpha',f')$.
    \end{enumerate}

    The bijectivity is obvious. Next we check that $\Psi$ preserves the identities and the compositions. For any $A$, we have $\Psi((A,e_{F(A)}),\epsilon)=(A,e_{F(A)},\epsilon)=id_A$, so the identities are preserved. 

    To verify that $\Psi$ preserves the compositions, we take $((A,\alpha),f) \in \textup{Mor}((B,*),(A,*))$ and $((B,\beta),g) \in \textup{Mor}((C,*),(B,*))$. 
    Then
    \begin{align*}
        \Psi(((A,\alpha),f)\circ((B,\beta),g))&=\Psi((A,\alpha)\circ (\Phi(f))(B,\beta),fg) \\
        &= \Psi((A,\alpha)\circ (f(B),\beta),fg)\\
        &=\Psi((A,\alpha \beta),fg) \\
        &=(A,\alpha\beta,fg)\\
        &=(A,\alpha,f) \circ (B,\beta,g)\\
        &= \Psi((A,\alpha),f) \circ \Psi((B,\beta),g).
    \end{align*} 

    The second property to check comes from the following argument.
    \begin{align*}
        &((A,\alpha),f) \sim_{\rtimes_{\Phi}}((A',\alpha'),f')\\ \iff &(A,\alpha)\sim(A',\alpha') \textup{ and } f\sim f' \\
        \iff &(F(\iota_A^B))(\alpha) = (F(\iota_{A'}^B))(\beta) \textup { for some }B \in \textup{Ob}(\mathcal{S}_0)= \textup{Ob}(\mathcal{S}_G)\textup{, and } f=f'\\
        \iff&
        (A,\alpha,f) \sim (A',\alpha',f')
    \end{align*}
\end{proof}

In Section \ref{sec: stabiliztion monoids and semidirect products of monoids}, we have seen that the functor $\Phi: \mathcal{G} \to \mathcal{E}nd(\mathcal{S}_0)$ induces a group homomorphism $\phi: G = M_{(\mathcal{G}, \sim_{dis})} \to \textup{End}(M_{(\mathcal{S}_0, \sim)}) = \textup{End}(\displaystyle \lim_{\longrightarrow}{}^0 F)$, given by $(\phi(g))[A, \alpha] = [g(A), \alpha]$. Thus, the above theorem yields the following corollary.

\begin{corollary} \label{cor: G-direct limit is the semidirect product of direct limit and G}
    $\displaystyle \lim_{\longrightarrow}{}^G F \cong \displaystyle \lim_{\longrightarrow}{} F \rtimes_\phi G $
\end{corollary}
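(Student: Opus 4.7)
The plan is to obtain Corollary \ref{cor: G-direct limit is the semidirect product of direct limit and G} as a direct consequence of Theorem \ref{thm: S_G is the semidirect product of S_0 and G} together with the fact that the concentration monoid functor $\mathbf{M}$ preserves semidirect products (Proposition \ref{prop: concentration of semidirect product is semidirect product of concentration}). Essentially all the work has already been done upstream, and what remains is to apply $\mathbf{M}$ to the existing categorical isomorphism and identify the resulting pieces with the objects named in the statement.

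First, I would apply the functor $\mathbf{M}: \mathscr{G}rpd_{\sim} \to \mathscr{G}rp$ from Theorem \ref{thm: Taking concentration monoid is a functor} to the isomorphism
$$(\mathcal{S}_G,\sim) \;\cong\; (\mathcal{S}_0,\sim) \rtimes_{\Phi} (\mathcal{G},\sim_{dis})$$
established in Theorem \ref{thm: S_G is the semidirect product of S_0 and G}. By Proposition \ref{prop: concentration preserving functor induces monoid homomorphism}, a concentration isomorphism induces an isomorphism of the associated concentration monoids (which here are groups by Proposition \ref{prop: G is a monoid}, since $\mathcal{S}_G$ is a groupoid). Thus
$$M_{(\mathcal{S}_G,\sim)} \;\cong\; M_{\bigl((\mathcal{S}_0,\sim) \rtimes_{\Phi} (\mathcal{G},\sim_{dis})\bigr)}.$$

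Next, I would invoke Proposition \ref{prop: concentration of semidirect product is semidirect product of concentration}, which rewrites the right-hand side as
$$M_{(\mathcal{S}_0,\sim)} \rtimes_{\phi} M_{(\mathcal{G},\sim_{dis})},$$
where $\phi$ is the induced map on concentration monoids associated to $\Phi$. Finally, I would identify each factor: by Definition \ref{def: G-equivariant direct limit}, $M_{(\mathcal{S}_G,\sim)} = \displaystyle\lim_{\longrightarrow}{}^G F$; by Proposition \ref{prop: trivial G-equivariant semidirect product is direct product} applied to the trivial group, $M_{(\mathcal{S}_0,\sim)} = \displaystyle\lim_{\longrightarrow}{}^0 F \cong \displaystyle\lim_{\longrightarrow} F$; and tautologically $M_{(\mathcal{G},\sim_{dis})} = G$. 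One should also verify that the induced action $\phi$ agrees with the one described in the statement of the corollary, namely $(\phi(g))[A,\alpha] = [g(A),\alpha]$; this follows immediately from the formula $(\Phi(g))(A,\alpha) = (g(A),\alpha)$ used to define $\Phi$ and the fact that the induced homomorphism sends $[f][\alpha]$ to $[\Phi_f(\alpha)]$ by Section \ref{sec: stabiliztion monoids and semidirect products of monoids}.

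There is no real obstacle here; the proof is essentially a one-line assembly of previously established results. The only subtle point to check carefully is that the homomorphism $\phi$ appearing implicitly in $M_{(\mathcal{S}_0,\sim)} \rtimes_{\phi} G$ (produced by Proposition \ref{prop: concentration of semidirect product is semidirect product of concentration}) coincides with the action used in the statement of the corollary. Since this is tracked explicitly in the construction preceding Proposition \ref{prop: concentration of semidirect product is semidirect product of concentration}, it is a routine unraveling of definitions rather than a genuine difficulty.
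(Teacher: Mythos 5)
Your proposal is correct and matches the paper's own proof essentially verbatim: the paper likewise combines Theorem \ref{thm: S_G is the semidirect product of S_0 and G}, Proposition \ref{prop: concentration preserving functor induces monoid homomorphism}, and Proposition \ref{prop: concentration of semidirect product is semidirect product of concentration}, then identifies $M_{(\mathcal{S}_0,\sim)}$ with $\displaystyle\lim_{\longrightarrow} F$ and $M_{(\mathcal{G},\sim_{dis})}$ with $G$. Your extra remark about checking that the induced $\phi$ agrees with the stated action is a reasonable point of care, consistent with the discussion the paper gives just before the corollary.
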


\begin{proof}
    Combine Proposition \ref{prop: concentration preserving functor induces monoid homomorphism}, Proposition \ref{prop: concentration of semidirect product is semidirect product of concentration} and Theorem \ref{thm: S_G is the semidirect product of S_0 and G}, we have
    $$
        \displaystyle \lim_{\longrightarrow}{}^G F = M_{(\mathcal{S}_G,\sim)} 
        \cong M_{(\mathcal{S}_0,\sim) \rtimes_{\Phi} (\mathcal{G},\sim)} 
        \cong M_{(\mathcal{S}_0,\sim)} \rtimes_{\phi} M_{(\mathcal{G},\sim)} 
        = \displaystyle \lim_{\longrightarrow}{}^0 F \rtimes_\phi G 
        = \displaystyle \lim_{\longrightarrow} F \rtimes_\phi G
    $$
\end{proof}

\subsection{Example: $\mathbb{R}$-braid groups}
\label{sec: real braid groups}

Let $S^{\mathbb{R}}$ be the collection of finite subsets of $\mathbb{R}$, with partial order subset inclusion $\subseteq$, then $(S^{\mathbb{R}}, \subseteq)$ is a directed set. Denote $\mathcal{S}^{\mathbb{R}}$ to be the associated direct category. We define the functor $Br: \mathcal{S}^{\mathbb{R}} \rightarrow \G rp$ as follows:

\begin{itemize}
    \item For any finite set $A$, let $Br(A) = Br_{|A|}$, the braid group with $|A|$ strands.
    \item For any pair $A \subseteq B$, let $Br(\iota_A^B): Br_{|A|} \to Br_{|B|}$ be the group homomorphism induced by adding backside trivial strands on $B \setminus A$, where we identify the endpoints of $Br_{|A|}$ and $Br_{|B|}$ with $A$ and $B$ respectively. See Figure \ref{fig: adding trivial strands to a real braid} for an illustration.
\end{itemize}

Let $\textup{Aut}^+(\mathbb{R})$ be the group of order-preserving automorphisms on $\mathbb{R}$. Here $\mathbb{R}$ can be viewed either as a set, a topological space or a smooth manifold. Then the corresponding $\textup{Aut}^+(\mathbb{R})$ is $\textup{Bij}^+(\mathbb{R})$, $\textup{Homeo}^+(\mathbb{R})$ or $\textup{Diff}^+(\mathbb{R})$ respectively.

Consider the $\textup{Aut}^+(\mathbb{R})$-action $\rho: \textup{Aut}^+(\mathbb{R}) \to \textup{Aut}(\mathcal{S})$ such that $\rho_f(A) = f(A)$ and $\rho_f(\iota_A^B) = \iota_{f(A)}^{f(B)}$. Again, we consider the trivial action on $\G rp$.

\begin{figure}
    \begin{subfigure}{0.4\textwidth}
        \centering
        \includegraphics{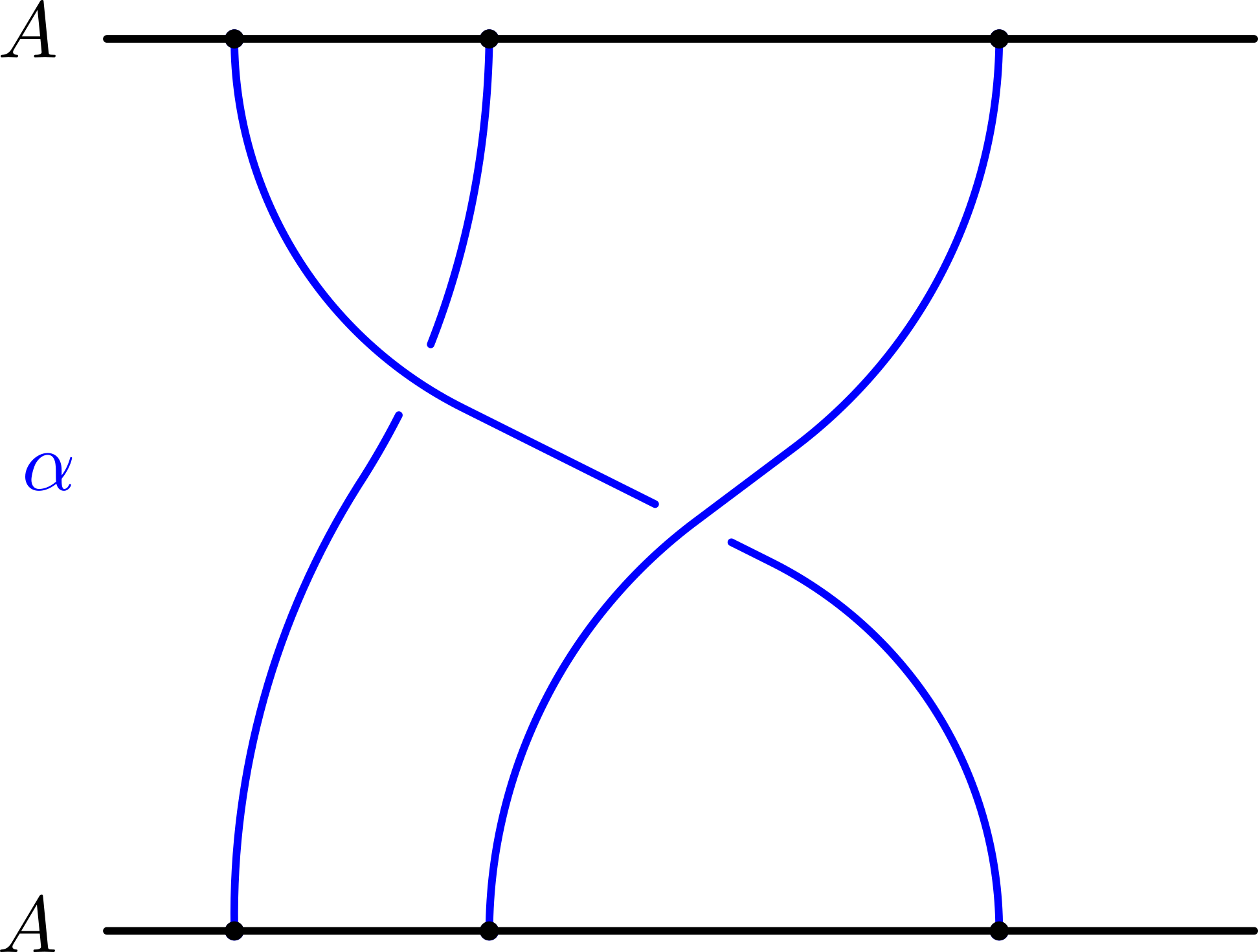}
        \caption{}
        \label{fig: real braid before adding trivial strands}
    \end{subfigure}
    \hfill
    \begin{subfigure}{0.4\textwidth}
        \centering
        \includegraphics{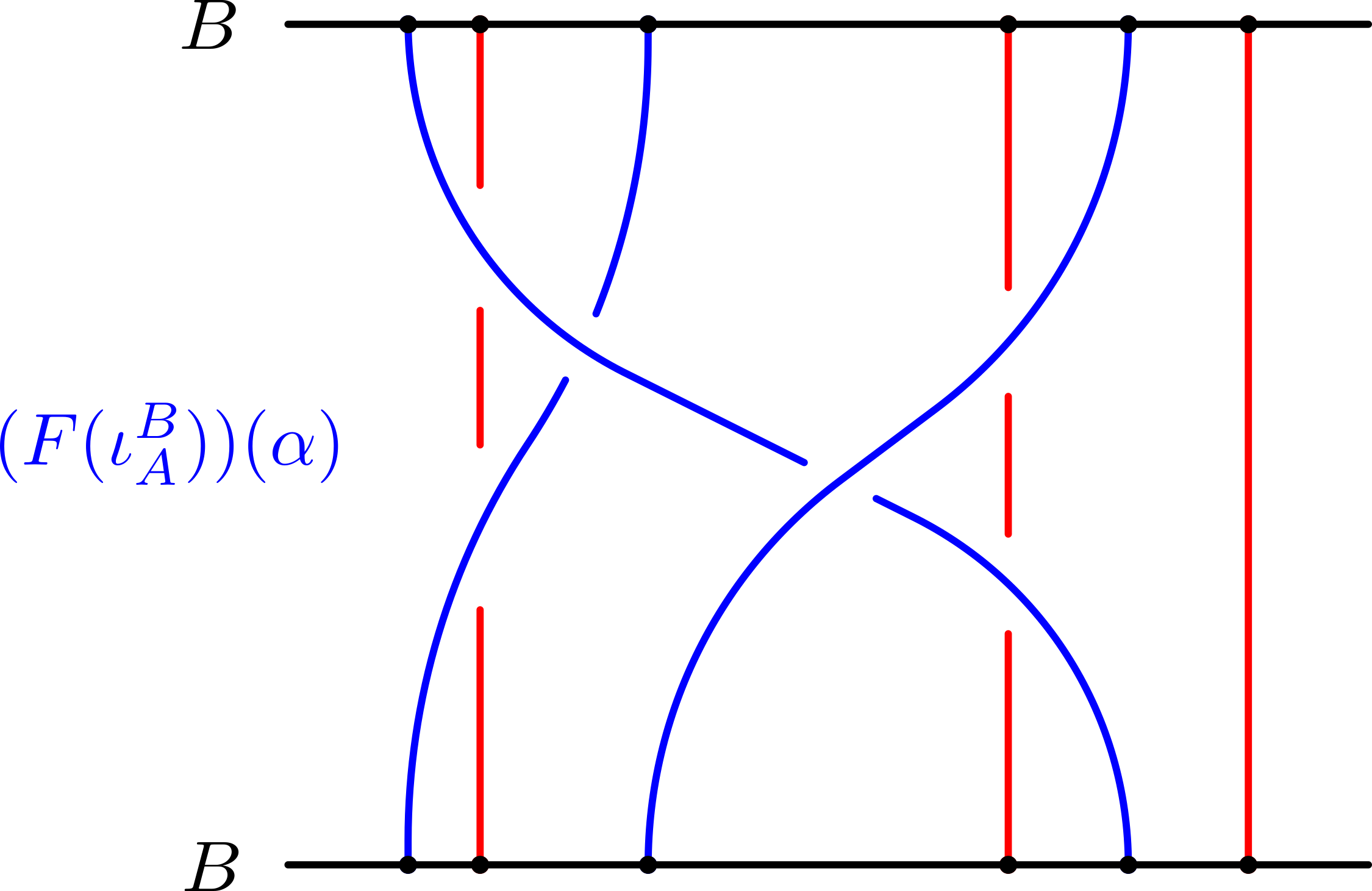}
        \caption{}
        \label{fig: real braid after after trivial strands}
    \end{subfigure}
    \caption{(a) An element $\alpha \in Br_{|A|}$. (b) The group homomorphism $F(\iota_A^B)$ adds backside trivial strands (marked in red) to $\alpha$. The endpoints of those trivial strands are on $B \setminus A$.}
    \label{fig: adding trivial strands to a real braid}
\end{figure}

\begin{lemma}
    $Br$ is an $\textup{Aut}^+(\mathbb{R})$-equivariant functor.
\end{lemma}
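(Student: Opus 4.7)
The plan is to unwind the definition of $G$-equivariance for $G=\textup{Aut}^+(\mathbb{R})$ acting trivially on $\mathscr{G}rp$. Under this setup, equivariance reduces to showing two equalities: $Br(f(A))=Br(A)$ for every object $A$ of $\mathcal{S}^{\mathbb{R}}$, and $Br(f(\iota_A^B))=Br(\iota_A^B)$ for every morphism $\iota_A^B$, where $f\in\textup{Aut}^+(\mathbb{R})$. The object-level equality is essentially immediate: since $f$ is a bijection, $|f(A)|=|A|$, so $Br(f(A))=Br_{|f(A)|}=Br_{|A|}=Br(A)$.

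For the morphism-level equality, recall that by definition of the $\textup{Aut}^+(\mathbb{R})$-action on $\mathcal{S}^{\mathbb{R}}$, $f(\iota_A^B)=\iota_{f(A)}^{f(B)}$. Thus I need to verify that the homomorphism $Br(\iota_{f(A)}^{f(B)}):Br_{|f(A)|}\to Br_{|f(B)|}$ obtained by adding trivial backside strands on $f(B)\setminus f(A)$ coincides, as an abstract group homomorphism $Br_{|A|}\to Br_{|B|}$, with the corresponding $Br(\iota_A^B)$. The key observation is that the map ``add trivial backside strands at positions indexed by $B\setminus A$'' depends only on the combinatorial data of which positions (after ordering $B$ linearly) correspond to $A$.

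Writing $B=\{b_1<b_2<\cdots<b_n\}$ and $A=\{b_{i_1}<\cdots<b_{i_k}\}$, the index set $\{i_1,\dots,i_k\}\subseteq\{1,\dots,n\}$ determines $Br(\iota_A^B)$. Because $f$ is order-preserving, $f(B)=\{f(b_1)<\cdots<f(b_n)\}$ and $f(A)=\{f(b_{i_1})<\cdots<f(b_{i_k})\}$, so the induced index set $\{i_1,\dots,i_k\}$ is identical. Hence $Br(\iota_{f(A)}^{f(B)})=Br(\iota_A^B)$, completing the proof.

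The main obstacle is really a bookkeeping one: one must make precise the identification of $Br(A)$ with $Br_{|A|}$ via the unique order-preserving bijection $A\cong\{1,\dots,|A|\}$, so that the equality of the two homomorphisms (rather than mere conjugacy) can be asserted. Once this identification is fixed and one notes that $f$ preserves the ambient order on $\mathbb{R}$, all the required equalities on strand positions follow tautologically. No deep property of braid groups is used beyond the combinatorial description of the ``add trivial strands'' map.
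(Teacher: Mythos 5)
Your proposal is correct and follows essentially the same route as the paper: the object-level equality comes from $f$ being a bijection, and the morphism-level equality comes from $f$ being order-preserving, so that the trivial strands are added in the same relative positions. Your explicit bookkeeping with the index set $\{i_1,\dots,i_k\}\subseteq\{1,\dots,n\}$ is just a more careful spelling-out of the paper's phrase ``adding trivial strands in the same relative position.''
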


\begin{proof}
   It is easy to check that $Br$ is a functor. To show it is $\textup{Aut}^+(\mathbb{R})$-equivariant, we need to verify that $Br(A)=Br(f(A))$ and $Br(\iota_A^B)=Br(\iota_{f(A)}^{f(B)})$ for any $f \in \textup{Aut}^+(\mathbb{R})$ and any objects $A \subseteq B$.
   
   First, $Br(A) = Br_{|A|} = Br_{|f(A)|} = Br(f(A))$ follows from $f$ is a bijection, and $Br(\iota_A^B)=Br(\iota_{f(A)}^{f(B)})$ follows from that $f$ is also order preserving. To be more precise, $f$ is order preserving meaning that we are adding trivial strands in the same relative position, so the group homomorphism from $Br_{|A|}$ to $Br_{|B|}$ is the same as group homomorphism from $Br_{|f(A)|}$ to $Br_{|f(B)|}$.
\end{proof}

 Using the construction of $G$-equivariant direct limit (Definition \ref{def: G-equivariant direct limit}), we define the $\mathbb{R}$-braid group as follows.


\begin{definition}
    Define the \textbf{$\mathbb{R}$-braid group} $Br_\mathbb{R} := \displaystyle \lim_{\longrightarrow}{}^{\textup{Aut}^+(\mathbb{R})} Br$, the ${\textup{Aut}^+(\mathbb{R})}$-equivariant direct limit of $Br$.
\end{definition}

\begin{figure}
    \centering
    \begin{subfigure}[c]{0.4\textwidth}
        \centering
        \includegraphics{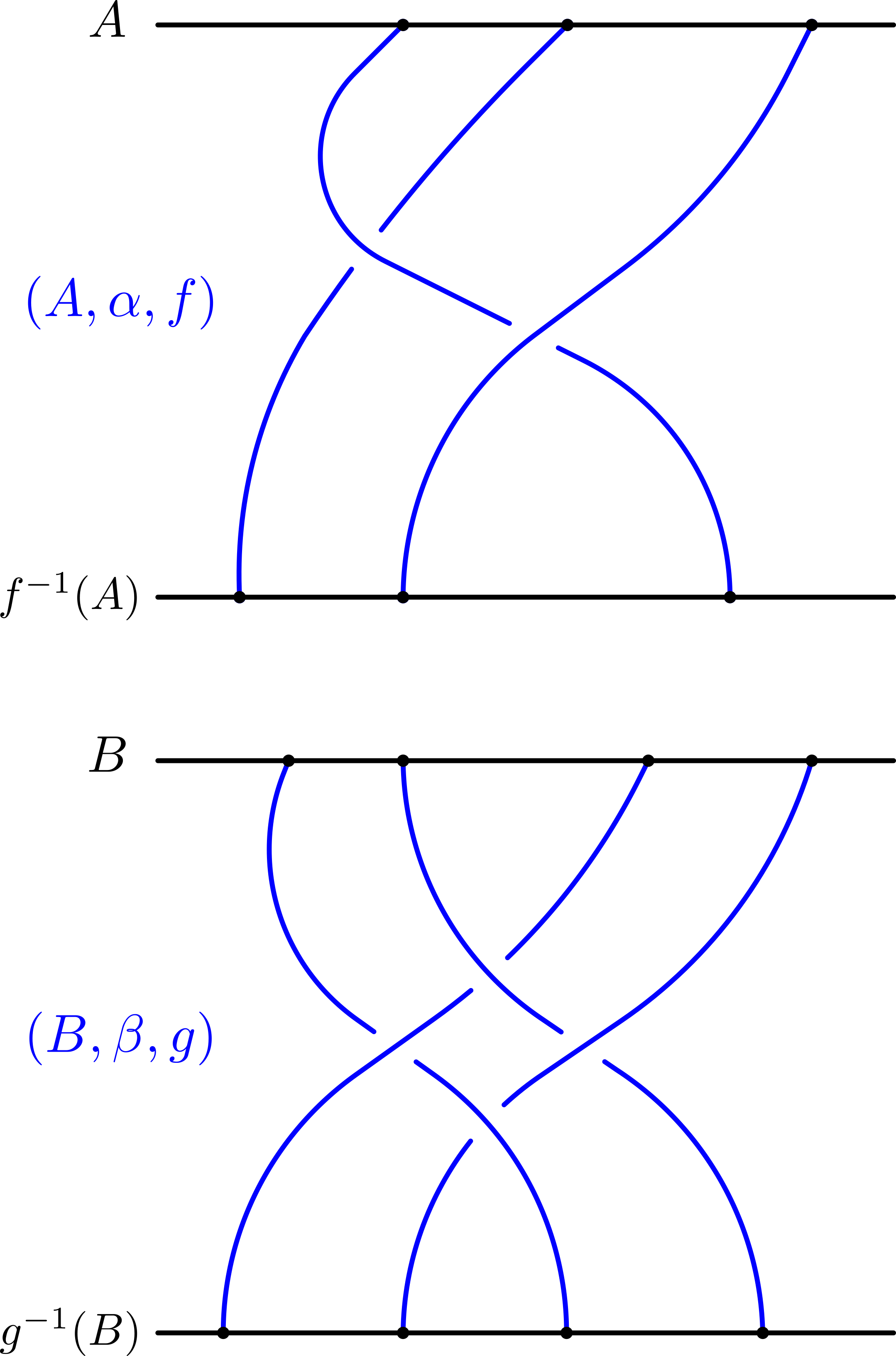}
        \caption{}
        \label{fig: two real braids with shifts}
    \end{subfigure}
    \hfill
    \begin{subfigure}[c]{0.45\textwidth}
        \centering
        \includegraphics{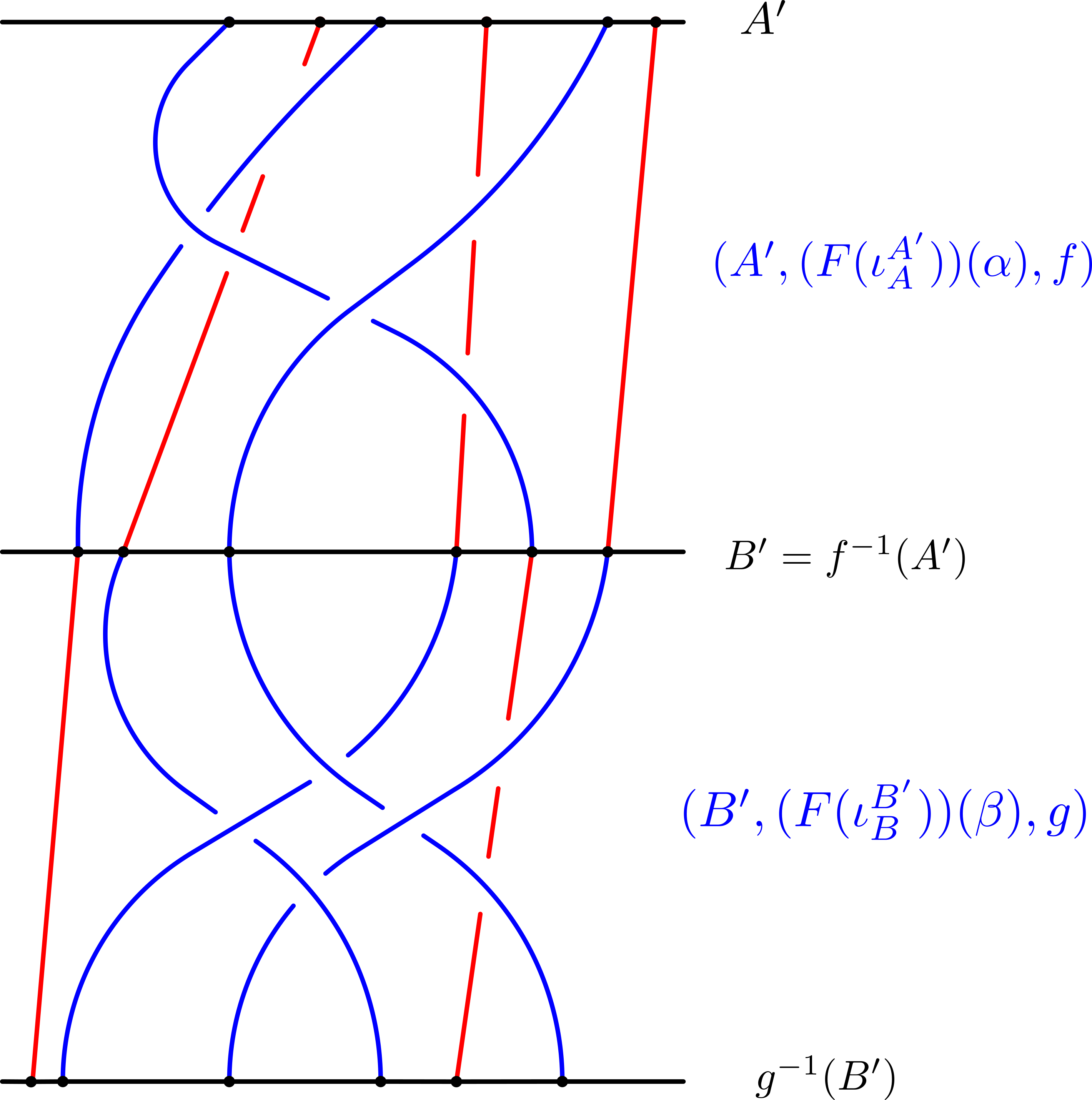}
        \vfill
        \caption{}
        \label{fig: product of real braids with shifts}
    \end{subfigure}
    \caption{(a) A diagram of $[A, \alpha, f]$ and a diagram of $[B, \beta, g]$. (b) To take the product of $[A, \alpha, f]$ and $[B, \beta, g]$, we add shifted trivial strands so that the two diagrams have matching endpoints in between. Their concatenation is the diagram of the product.}
    \label{fig: example of real braid group multiplication}
\end{figure}

By Corollary \ref{cor: G-direct limit is the semidirect product of direct limit and G}, $Br_\mathbb{R}$ is isomorphic to a semidirect product of $\displaystyle \lim_{\longrightarrow}{} Br$ and $\textup{Aut}^+(\mathbb{R})$.


Depending on the additional structures on $\mathbb{R}$, the group $\textup{Aut}^+(\mathbb{R})$ can be viewed as $\textup{Bij}^+(\mathbb{R})$, $\textup{Homeo}^+(\mathbb{R})$ or $\textup{Diff}^+(\mathbb{R})$. We denote the corresponding $\mathbb{R}$-braid groups as $Br_{\mathbb{R}}^{set}$ (the setwise $\mathbb{R}$-braid group), $Br_{\mathbb{R}}^{top}$ (the topological $\mathbb{R}$-braid group) and $Br_{\mathbb{R}}^{diff}$ (the smooth $\mathbb{R}$-braid group). By Proposition \ref{prop: subgroup equivariant direct limit is subgroup}, we have $Br_{\mathbb{R}}^{diff} \leq Br_{\mathbb{R}}^{top} \leq Br_{\mathbb{R}}^{set}$.

\vspace{5mm}


The definition of the $\mathbb{R}$-braid group $Br_{\mathbb{R}}$ as an $\textup{Aut}^+$-equivariant direct limit aligns naturally with the usual pictorial description of braid multiplication. An element $[A,\alpha,f] \in Br_{\mathbb{R}}$ can be represented by a braid $\alpha$ with top endpoints on $A$ and bottom endpoints on $f^{-1}(A)$. Such a representative is not unique: two diagrams representing the same class $[A,\alpha,f]$ may differ by adding and deleting some $f$-shifted trivial strands, each of which connects a point $p$ to $f(p)$, drawn behind all other strands.

The product of two elements $[A,\alpha,f]$ and $[B,\beta,g]$ is obtained by concatenating suitable representatives $(A', \alpha', f)$ and $(B', \beta', g)$ such that $f^{-1}(A') = B'$. See Figure \ref{fig: example of real braid group multiplication} for an example.

Equivalently, one may think of an element of $Br_\mathbb{R}$ as an infinite braid on the entire set $\mathbb{R}$, with only finitely many possibly ``non-trivial'' strands, and the product is given by direct concatenation along $\mathbb{R}$.


\section{Concentrations, fundamental groupoids and  fibrations}
\label{sec: concentrations fundamental groupoids and fibrations}


In this section, we present further applications of concentration structures, focusing on those arising from fundamental groupoids. By relating to fundamental groups and fibrations, we illustrate how concentration structures can capture and reinterpret classical topological invariants.

\subsection{Concentration structures on fundamental groupoids}\label{sec: Concentration structures on fundamental groupoids and fundamental groups}

We first recall the definition of fundamental groupoid.

\begin{definition}
    Let $X$ be a path-connected topological space. The fundamental groupoid $\Pi(X)$ is the groupoid whose objects are points in $X$, and the morphisms from $x$ to $y$ are the paths from $x$ to $y$ up to homotopy relative to endpoints. The composition of morphisms is induced by the concatenation of paths.
\end{definition}

By definition, the fundamental group of $X$ is recovered from $\textup{Mor}(x,x)$ for any $x\in X$. We will provide an alternative way of obtaining $\pi_1(X)$ from $\Pi(X)$ by equipping a concentration structure. 

First, choose a base point $x_0$ and a family of paths $\Theta_{x_0}=\{\theta_{x_0}^y| \ y\in X\}$, where each $\theta_{x_0}^y$ is a path from $x_0$ to $y$ up to homotopy relative to endpoints. In other words, $\theta_{x_0}^y$ is a morphism from $x_0$ to $y$ in the fundamental groupoid $\Pi(X)$. We require that $\theta_{x_0}^{x_0}$ is the constant path.

\begin{definition}
    Given a choice of $\Theta_{x_0}$ as above, we define a concentration structure $\sim$ on $\Pi(X)$ as follows: for any two morphisms $\alpha \in \textup{Mor}(a, b)$ and $\beta \in \textup{Mor}(c, d)$, define $\alpha \sim \beta$ if and only if $(\theta _{x_0}^b)^{-1}\circ\alpha\circ\theta_{x_0}^a=(\theta _{x_0}^d)^{-1}\circ\beta\circ\theta_{x_0}^c$, as morphisms from $x_0$ to itself.
\end{definition}

\begin{lemma}
    $\sim$ is indeed a concentration structure on $\Pi(X)$.
\end{lemma}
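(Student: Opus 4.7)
The plan is to realize $\sim$ as a pullback of a discrete concentration structure, so that the four axioms follow immediately from Lemma \ref{prop: pullback concentration of $2$-lifting functor}. To that end, I would introduce the canonical functor
$$
F \colon \Pi(X) \longrightarrow \mathcal{M}_{\pi_1(X, x_0)}
$$
where $\mathcal{M}_{\pi_1(X,x_0)}$ is the one-object category of the fundamental group based at $x_0$. On objects $F$ sends every point of $X$ to $*$, and on morphisms it sends $\alpha \in \textup{Mor}(a,b)$ to the loop $(\theta_{x_0}^b)^{-1} \circ \alpha \circ \theta_{x_0}^a$ at $x_0$. By construction, two morphisms in $\Pi(X)$ are $\sim$-equivalent if and only if they have the same image under $F$, which is exactly the pullback condition $\sim_{dis}^{F^*}$.

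Next I would verify that $F$ is indeed a functor. Identities go to identities since $(\theta_{x_0}^a)^{-1} \circ \theta_{x_0}^a$ is the trivial loop at $x_0$ in $\Pi(X)$. For composition, given $\alpha: a \to b$ and $\beta: b \to c$, a one-line computation shows
$$
F(\beta \circ \alpha) = (\theta_{x_0}^c)^{-1} \circ \beta \circ \alpha \circ \theta_{x_0}^a = F(\beta) \cdot F(\alpha),
$$
using that the intermediate $\theta_{x_0}^b \circ (\theta_{x_0}^b)^{-1}$ cancels. The required $\theta_{x_0}^{x_0} = \mathrm{const}$ condition guarantees compatibility at the basepoint, though it is not strictly needed at other points.

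Then I would check that $F$ is a $2$-lifting functor (Definition \ref{def: $2$-lifting functor}). Given any two composable loops $g_1, g_2$ at $x_0$ in $\mathcal{M}_{\pi_1(X, x_0)}$, we can lift them to themselves, viewed as morphisms $x_0 \to x_0$ in $\Pi(X)$; these are trivially composable in $\Pi(X)$ and satisfy $F(g_i) = (\theta_{x_0}^{x_0})^{-1} \circ g_i \circ \theta_{x_0}^{x_0} = g_i$ since $\theta_{x_0}^{x_0}$ is the constant path. Hence $F$ is $2$-lifting, and applying Lemma \ref{prop: pullback concentration of $2$-lifting functor} to the discrete concentration on $\mathcal{M}_{\pi_1(X, x_0)}$ gives that $\sim = \sim_{dis}^{F^*}$ is a concentration structure on $\Pi(X)$.

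The only mild subtlety — which I would not call a true obstacle — is ensuring $F$ is genuinely a functor given that the paths $\theta_{x_0}^y$ are only chosen representatives and not canonical; however, since everything is taken up to homotopy relative to endpoints in a groupoid, the cancellation $\theta_{x_0}^b \circ (\theta_{x_0}^b)^{-1} = \mathrm{id}_b$ holds strictly, so no coherence issues arise. This also explains conceptually why Theorem \ref{thm: concentration monoid of fundamental groupoid is fundamental group} will follow: the concentrating functor is exactly $F$, and Lemma \ref{lem: pullback concentration induce isomorphism on concentration monoids} will then give $M_{(\Pi(X),\sim)} \cong \pi_1(X, x_0)$.
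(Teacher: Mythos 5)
Your proof is correct, but it takes a genuinely different route from the paper. The paper verifies directly that $\sim$ is an equivalence relation and then checks the three axioms of a $3$-concentration structure (Definition \ref{def: n-concentration}), concluding via Proposition \ref{prop: 3-concentration implies concentration}; you instead exhibit $\sim$ as the pullback $\sim_{dis}^{F^*}$ of the discrete concentration along the functor $F(\alpha) = (\theta_{x_0}^b)^{-1}\circ\alpha\circ\theta_{x_0}^a$, check that $F$ is a functor and is $2$-lifting (using that $\theta_{x_0}^{x_0}$ is constant, so loops at $x_0$ lift to themselves), and invoke Lemma \ref{prop: pullback concentration of $2$-lifting functor}. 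Both arguments are sound; the paper's is elementary and self-contained, while yours reuses the pullback machinery and is more economical: the equivalence-relation check comes for free (equality of images under a map), and, as you observe, Lemma \ref{lem: pullback concentration induce isomorphism on concentration monoids} then yields $M_{(\Pi(X),\sim)}\cong\pi_1(X,x_0)$ immediately, whereas the paper proves Theorem \ref{thm: concentration group is isomorphisc to fundamental group} separately by writing down essentially the same map $[\alpha]_\sim\mapsto(\theta_{x_0}^b)^{-1}\circ\alpha\circ\theta_{x_0}^a$ by hand. The one thing your route does not record, and the paper's does, is that $\sim$ is in fact a $3$-concentration structure, a strictly stronger property that the paper later contrasts with Example \ref{ex: not 3-concentration}; but this is not needed for the lemma as stated.
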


\begin{proof}
    It is easy to verify that $\sim$ is an equivalence relation. We now check that it satisfies the three axioms of $3$-concentration structure (Definition \ref{def: n-concentration}). Then it follows from Proposition \ref{prop: 3-concentration implies concentration} that $\sim$ is a concentration structure. 

    \textbf{Axiom 1.} For any point $a$ in $X$, let $id_{a}$ be the constant path at $a$. Then $(\theta_{x_0}^a)^{-1} \circ id_a \circ \theta_{x_0}^a = id_{x_0}$. Thus $id_a \sim id_b$ for any two points $a, b$ in $X$.
    
    \textbf{Axiom 2.} Given paths $\alpha:b\to c, \ \ \beta:a\to b$, $\alpha':b'\to c', \  \beta':a'\to b'$ such that $\alpha \sim \alpha', \ \beta \sim\beta'$. Then
    \begin{align*}
        (\theta_{x_0}^c)^{-1} \circ \alpha \circ \beta \circ \theta_{x_0}^a &= (\theta_{x_0}^c)^{-1} \circ \alpha  \circ \theta_{x_0}^b \circ (\theta_{x_0}^b)^{-1}  \circ \beta \circ \theta_{x_0}^a \\ 
        &= (\theta_{x_0}^{c'})^{-1} \circ \alpha'  \circ \theta_{x_0}^{b'} \circ (\theta_{x_0}^{b'})^{-1}  \circ \beta' \circ \theta_{x_0}^{a'}\\
        &=  (\theta_{x_0}^{c'})^{-1} \circ \alpha'  \circ \beta' \circ \theta_{x_0}^{a'}.
    \end{align*}
    showing that $\alpha \circ \beta \sim \alpha' \circ \beta'$.

    \textbf{Axiom 3.} Let $\alpha: a \to b, \beta: c \to d$ and $\gamma: e \to f$ be three paths in $X$. Note that $\alpha \sim (\theta_{x_0}^b)^{-1} \circ \alpha \circ \theta_{x_0}^a$, since $(\theta_{x_0}^b)^{-1} \circ \alpha \circ \theta_{x_0}^a = (\theta_{x_0}^{x_0})^{-1} \circ (\theta_{x_0}^b)^{-1} \circ \alpha \circ \theta_{x_0}^a \circ \theta_{x_0}^{x_0}$. Similarly, $\beta \sim (\theta_{x_0}^d)^{-1} \circ \alpha \circ \theta_{x_0}^c$ and $\gamma \sim (\theta_{x_0}^f)^{-1} \circ \alpha \circ \theta_{x_0}^e$. Moreover, $\left((\theta_{x_0}^b)^{-1} \circ \alpha \circ \theta_{x_0}^a\right) \circ \left((\theta_{x_0}^d)^{-1} \circ \alpha \circ \theta_{x_0}^c\right) \circ \left((\theta_{x_0}^f)^{-1} \circ \gamma \circ \theta_{x_0}^e\right)$ exists as a composition of three loops based at $x_0$.
\end{proof}

Theorem \ref{thm: concentration monoid of fundamental groupoid is fundamental group} follows directly from the following theorem.

\begin{theorem} \label{thm: concentration group is isomorphisc to fundamental group}
    For any choice of $\Theta_{x_0}$, the concentration group $M_{(\Pi(X), \sim)}$ is isomorphic to the fundamental group $\pi_1(X, x_0)$.
\end{theorem}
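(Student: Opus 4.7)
My plan is to construct an explicit isomorphism $\Psi: M_{(\Pi(X), \sim)} \to \pi_1(X, x_0)$ by conjugating every path into a loop based at $x_0$. Concretely, for $\alpha \in \textup{Mor}(a, b)$, I would define
$$
\Psi([\alpha]) = (\theta_{x_0}^b)^{-1} \circ \alpha \circ \theta_{x_0}^a,
$$
viewed as an element of $\pi_1(X, x_0)$. Well-definedness is immediate from the definition of $\sim$: by construction $\alpha \sim \beta$ if and only if the two conjugated loops coincide, so $\Psi$ is forced on us and automatically respects the equivalence relation.

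From this observation, injectivity is essentially free, since $\Psi([\alpha]) = \Psi([\beta])$ unfolds to exactly the defining condition $\alpha \sim \beta$. For surjectivity, I would use the requirement that $\theta_{x_0}^{x_0}$ is the constant path: any loop $\gamma \in \pi_1(X, x_0)$ sits in the image via $\Psi([\gamma]) = (\theta_{x_0}^{x_0})^{-1} \circ \gamma \circ \theta_{x_0}^{x_0} = \gamma$.

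The main step is checking that $\Psi$ is a homomorphism, and this is where the concentration-monoid multiplication needs care. The key trick is to observe that every morphism is $\sim$-equivalent to a loop at $x_0$: for $\alpha: a \to b$, one has $\alpha \sim (\theta_{x_0}^b)^{-1} \circ \alpha \circ \theta_{x_0}^a$ because both sides produce the same conjugated loop (using $\theta_{x_0}^{x_0} = id_{x_0}$). Thus, given $[\alpha], [\beta] \in M_{(\Pi(X), \sim)}$, I can choose the loop representatives $\tilde\alpha = (\theta_{x_0}^b)^{-1} \circ \alpha \circ \theta_{x_0}^a$ and $\tilde\beta = (\theta_{x_0}^d)^{-1} \circ \beta \circ \theta_{x_0}^c$, which are composable at $x_0$. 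Then
$$
[\alpha][\beta] = [\tilde\alpha \circ \tilde\beta], \qquad \Psi([\tilde\alpha \circ \tilde\beta]) = \tilde\alpha \circ \tilde\beta = \Psi([\alpha]) \cdot \Psi([\beta]),
$$
which is exactly the homomorphism condition.

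The one mildly delicate point I anticipate is making sure the product in $M_{(\Pi(X), \sim)}$ is read in the right order: the concentration monoid multiplication is $[f][g] = [f' \circ g']$ for composable representatives, so I need the targets of the chosen loops to match the sources as appropriate; since everything has been pushed to $x_0$, this is automatic. No other real obstacle arises, as all four axioms of concentration have already been verified in the preceding lemma, so the monoid structure is in place and the above map is readily a bijective homomorphism.
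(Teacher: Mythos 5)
Your proposal is correct and follows essentially the same route as the paper: the same conjugation map $[\alpha]\mapsto(\theta_{x_0}^b)^{-1}\circ\alpha\circ\theta_{x_0}^a$, the same use of loop representatives at $x_0$ (composable because $\theta_{x_0}^{x_0}$ is constant) to verify the homomorphism property, and the same observation that injectivity and well-definedness are just the definition of $\sim$ read in both directions. No gaps.
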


\begin{proof}
    Consider the map $\phi: M_{(\Pi(X), \sim)} \to \pi_1(X, x_0)$ sending $[\alpha]_{\sim}$ to $(\theta_{x_0}^b)^{-1} \circ \alpha \circ \theta_{x_0}^a$, where $\alpha$ is a path from $a$ to $b$. By the definition of $\sim$, the map $\phi$ is well-defined.

    We claim that $\phi$ is a group homomorphism. Indeed, given two paths $\alpha: a \to b$ and $\beta: c \to d$, we have
    \begin{align*}
        \phi([\alpha]_{\sim} [\beta]_{\sim}) &= \phi[(\theta_{x_0}^b)^{-1} \circ \alpha \circ \theta_{x_0}^a \circ (\theta_{x_0}^d)^{-1} \circ \beta \circ \theta_{x_0}^c] \\
        &= (\theta_{x_0}^{x_0})^{-1} \circ (\theta_{x_0}^b)^{-1} \circ \alpha \circ \theta_{x_0}^a \circ (\theta_{x_0}^d)^{-1} \circ \beta \circ \theta_{x_0}^c \circ \theta_{x_0}^{x_0} \\
        &= (\theta_{x_0}^b)^{-1} \circ \alpha \circ \theta_{x_0}^a \circ (\theta_{x_0}^d)^{-1} \circ \beta \circ \theta_{x_0}^c \\
        &= \phi[\alpha]_{\sim} \phi[\beta]_{\sim}
    \end{align*}

To see that $\phi$ is surjective, note that any $\alpha\in \pi_1(X,x_0)$ is the image of $[\alpha]_\sim\in M_{(\Pi(X),\sim)}$, since $\phi([\alpha]_\sim)= (\theta_{x_0}^{x_0})^{-1} \circ \alpha \circ \theta_{x_0}^{x_0}=\alpha$.

For injectivity, suppose $\alpha:a\to b$, $\alpha':a'\to b'$ satisfy 
$\phi([\alpha]_\sim)=\phi([\alpha']_\sim)$. Then the definition of $\phi$ implies that $(\theta_{x_0}^b)^{-1} \circ \alpha \circ \theta_{x_0}^a=(\theta_{x_0}^{b'})^{-1} \circ \alpha' \circ \theta_{x_0}^{a'}$, which exactly means $\alpha \sim \alpha'$. Hence $[\alpha]_\sim=[\alpha']_\sim$.
\end{proof}

Recall that the concentration structure we constructed depends on the choice of $\Theta_{x_0}$. We now show that this choice does not affect the concentration structure up to concentration isomorphism.

\begin{proposition}
    Up to concentration isomorphism, $(\Pi(X), \sim)$ does not depend on the choice of $\Theta_{x_0}$.
\end{proposition}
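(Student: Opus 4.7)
The plan is to construct an explicit concentration isomorphism between the two categories with concentration. Fix two choices $\Theta_{x_0} = \{\theta_{x_0}^y\}_{y \in X}$ and $\Theta'_{x_0'} = \{\theta'{}_{x_0'}^y\}_{y \in X}$, and let $\sim$ and $\sim'$ denote the corresponding concentration structures on $\Pi(X)$. Pick once and for all a morphism $\tau \in \textup{Mor}_{\Pi(X)}(x_0', x_0)$, which exists since $X$ is path-connected (if $x_0 = x_0'$, simply take $\tau = id_{x_0}$). Define $F\colon \Pi(X) \to \Pi(X)$ by $F(a) = a$ on objects, and
\[
F(\alpha) \;=\; \theta'{}_{x_0'}^b \circ \tau \circ (\theta_{x_0}^b)^{-1} \circ \alpha \circ \theta_{x_0}^a \circ \tau^{-1} \circ (\theta'{}_{x_0'}^a)^{-1}
\]
for any $\alpha \in \textup{Mor}_{\Pi(X)}(a,b)$.

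First, I would check that $F$ is a functor. The identity axiom $F(id_a) = id_a$ follows by the obvious cancellation in the definition. For composition, given $\beta : a \to c$ and $\alpha : c \to b$, the product $F(\alpha) \circ F(\beta)$ contains the middle factor $\theta_{x_0}^c \circ \tau^{-1} \circ (\theta'{}_{x_0'}^c)^{-1} \circ \theta'{}_{x_0'}^c \circ \tau \circ (\theta_{x_0}^c)^{-1}$, which telescopes to $id_c$; this yields $F(\alpha \circ \beta) = F(\alpha) \circ F(\beta)$. Next, I would exhibit an explicit strong inverse $F^{-1}$, obtained by swapping the roles of $\Theta_{x_0}$ and $\Theta'_{x_0'}$ and replacing $\tau$ by $\tau^{-1}$; the same telescoping argument shows that $F \circ F^{-1} = id_{\Pi(X)} = F^{-1} \circ F$ on morphisms.

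Finally, I would verify that $F$ is concentration preserving from $(\Pi(X), \sim)$ to $(\Pi(X), \sim')$, and symmetrically for $F^{-1}$. Given $\alpha : a \to b$ and $\beta : c \to d$ with $\alpha \sim \beta$, direct cancellation yields
\[
(\theta'{}_{x_0'}^b)^{-1} \circ F(\alpha) \circ \theta'{}_{x_0'}^a \;=\; \tau \circ (\theta_{x_0}^b)^{-1} \circ \alpha \circ \theta_{x_0}^a \circ \tau^{-1},
\]
and analogously for $\beta$. Since $\alpha \sim \beta$ gives $(\theta_{x_0}^b)^{-1} \alpha \theta_{x_0}^a = (\theta_{x_0}^d)^{-1} \beta \theta_{x_0}^c$, conjugating both sides by $\tau$ shows $F(\alpha) \sim' F(\beta)$. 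The reverse implication (hence that $F^{-1}$ is also concentration preserving) follows by the symmetric argument. This shows $F$ is a concentration isomorphism, so $(\Pi(X), \sim) \cong (\Pi(X), \sim')$.

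There is no real obstacle here: the heart of the argument is the observation that the conjugation map $\gamma \mapsto \tau \gamma \tau^{-1}$ identifies $\pi_1(X, x_0)$ with $\pi_1(X, x_0')$, and $F$ is precisely the functor that lifts this identification compatibly with the two different trivializations of morphisms as loops at the respective basepoints. The main care required is bookkeeping the source/target objects of each factor so that the composition is well defined, and confirming that the chosen $\tau$ does not affect the conclusion (different choices of $\tau$ give concentration-isomorphic functors via an inner automorphism).
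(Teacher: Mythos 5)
Your proposal is correct and follows essentially the same route as the paper: both construct the explicit conjugation functor $\alpha \mapsto \theta'{}_{x_0'}^{b}\circ\tau\circ(\theta_{x_0}^{b})^{-1}\circ\alpha\circ\theta_{x_0}^{a}\circ\tau^{-1}\circ(\theta'{}_{x_0'}^{a})^{-1}$, exhibit its strong inverse by swapping the two families of paths, and verify concentration preservation via the telescoping identity $(\theta'{}_{x_0'}^{b})^{-1}\circ F(\alpha)\circ\theta'{}_{x_0'}^{a}=\tau\circ(\theta_{x_0}^{b})^{-1}\circ\alpha\circ\theta_{x_0}^{a}\circ\tau^{-1}$. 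The only quibble is a bookkeeping slip: for your formula to compose, $\tau$ must be a morphism from $x_0$ to $x_0'$ rather than an element of $\mathrm{Mor}(x_0',x_0)$ as declared (equivalently, swap $\tau$ and $\tau^{-1}$ throughout), exactly matching the paper's choice of $\rho$ from $x_0$ to $z_0$.
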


\begin{proof}
    Let $\Theta_{x_0}=\{\theta_{x_0}^y\}$ and $\Sigma_{x_0}=\{\sigma_{z_0}^y\}$ be two such choices of paths. Denote the corresponding concentration structures as $\sim_\theta$ and $\sim_\sigma$ respectively.
    
    Let $\rho$ be a path from $x_0$ to $z_0$. Consider the functor $\Phi: (\Pi(X), \sim_{\theta}) \to (\Pi(X), \sim_{\sigma})$, sending point $a$ to itself, sending path $\alpha: a \to b$ to $\sigma_{z_0}^b \circ \rho \circ (\theta_{x_0}^b)^{-1} \circ \alpha \circ \theta_{x_0}^a \circ \rho^{-1} \circ (\sigma_{z_0}^a)^{-1}$.

    It is obvious that $\Phi$ has a strong inverse $\Phi^{-1}: (\Pi(X), \sim_{\sigma}) \to (\Pi(X), \sim_{\theta})$, sending point $a$ to itself, sending path $\beta: a \to b$ to $\theta_{x_0}^b \circ \rho^{-1} \circ (\sigma_{z_0}^b)^{-1} \circ \beta \circ \sigma_{z_0}^a \circ \rho \circ (\theta_{x_0}^a)^{-1}$.

    Next we check that $\Phi$ is a concentration preserving functor. Suppose that $\alpha \sim_{\theta} \alpha'$, where $\alpha'$ is a path from $a'$ to $b'$. Then $(\theta_{x_0}^b)^{-1} \circ \alpha \circ \theta_{x_0}^a = (\theta_{x_0}^{b'})^{-1} \circ \alpha' \circ \theta_{x_0}^{a'}$. We have
    \begin{align*}
        { }&(\sigma_{z_0}^b)^{-1} \circ \Phi(\alpha) \circ \sigma_{z_0}^a \\
        =& (\sigma_{z_0}^b)^{-1} \circ \sigma_{z_0}^b \circ \rho \circ (\theta_{x_0}^b)^{-1} \circ \alpha \circ \theta_{x_0}^a \circ \rho^{-1} \circ (\sigma_{z_0}^a)^{-1} \circ \sigma_{z_0}^a \\
        =& \rho \circ (\theta_{x_0}^b)^{-1} \circ \alpha \circ \theta_{x_0}^a \circ \rho^{-1} \\
        =& \rho \circ (\theta_{x_0}^{b'})^{-1} \circ \alpha' \circ \theta_{x_0}^{a'} \circ \rho^{-1} \\
        =& (\sigma_{z_0}^{b'})^{-1} \circ \sigma_{z_0}^{b'} \circ \rho \circ (\theta_{x_0}^{b'})^{-1} \circ \alpha' \circ \theta_{x_0}^{a'} \circ \rho^{-1} \circ (\sigma_{z_0}^{a'})^{-1} \circ \sigma_{z_0}^{a'} \\
        =& (\sigma_{z_0}^{b'})^{-1} \circ \Phi(\alpha') \circ \sigma_{z_0}^{a'}
    \end{align*}

    Thus $\Phi(\alpha) \sim_{\sigma} \Phi(\alpha')$, and $\Phi$ is a concentration preserving functor. By a similar argument, we can show that $\Phi^{-1}$ is also a concentration preserving functor. Hence $\Phi$ is a concentration isomorphism.
\end{proof}

\subsection{Fibrations and pullback concentration structures}
\label{sec: fibrations and concentration structures on fundamental groupoids}

Given a continuous map $f: X \to Y$ between two path-connected topological spaces, $f$ induces a functor $\Pi(f): \Pi(X) \to \Pi(Y)$, sending a point $x\in X$ to $f(x)$, and a path $\alpha$ to $f(\alpha)$.


We now give a sufficient condition on $f : X \to Y$ that ensures the induced functor $\Pi(f)$ is $2$-lifting, thereby allowing us to pullback the concentration structures on the target space.

\begin{proposition}
 \label{prop: fibration induces $2$-lifting functor}
    Let $E,B$ be two path-connected topological spaces, and let $p: E\to B$ be a fibration. Then the functor $\Pi(p): \Pi(E) \to \Pi(B)$ is a $2$-lifting functor.
\end{proposition}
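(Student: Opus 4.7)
The plan is straightforward: I would invoke the path lifting property --- which follows immediately from the homotopy lifting property defining a fibration --- to produce composable lifts of paths from $B$ to $E$, and then descend to homotopy classes to obtain composable morphisms in $\Pi(E)$.

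First, I would note that $p$ is surjective. Since $E$ is path-connected (hence nonempty) and $B$ is path-connected, fixing any $e \in E$ and any $b \in B$, there is a path in $B$ from $p(e)$ to $b$; lifting this path starting at $e$ yields a point in $E$ over $b$. Next, given morphisms $g_1, g_2$ in $\Pi(B)$ such that $g_1 \circ g_2$ is defined, I pick path representatives $\gamma_2$ of $g_2$ from $b_0$ to $b_1$ and $\gamma_1$ of $g_1$ from $b_1$ to $b_2$, and choose any $e_0 \in p^{-1}(b_0)$. By path lifting, there is a path $\tilde{\gamma}_2 : I \to E$ with $\tilde{\gamma}_2(0) = e_0$ and $p \circ \tilde{\gamma}_2 = \gamma_2$; setting $e_1 := \tilde{\gamma}_2(1) \in p^{-1}(b_1)$, I apply path lifting once more to $\gamma_1$ starting at $e_1$, obtaining $\tilde{\gamma}_1$ ending at some $e_2 \in p^{-1}(b_2)$. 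Setting $f_i := [\tilde{\gamma}_i]$ in $\Pi(E)$, the composition $f_1 \circ f_2$ exists because the target of $f_2$ equals the source of $f_1$ (namely $e_1$), and $\Pi(p)(f_i) = [p \circ \tilde{\gamma}_i] = g_i$ by construction, which is exactly the $2$-lifting condition.

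The main subtlety --- and essentially the only one --- is that morphisms in the fundamental groupoid are homotopy classes of paths, whereas path lifting operates on representatives. One therefore picks representatives, lifts at the path level, and then passes back to homotopy classes; no compatibility check between different choices of representatives is needed, since the $2$-lifting axiom merely asks for the existence of \emph{some} lifts. Beyond this bookkeeping, the argument is a direct application of the defining property of a fibration, so I do not anticipate any genuine obstacle.
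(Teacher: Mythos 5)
Your proof is correct, and the underlying topological content is the same as the paper's: use the path lifting property twice in succession so that the two lifts share an endpoint, then pass to homotopy classes. The only structural difference is in packaging. The paper does not argue directly; it first introduces the notion of a \emph{multivalued fibration} (a functor along which any morphism can be lifted once a lift of its \emph{target} object is prescribed), proves a general lemma that every surjective multivalued fibration is $2$-lifting, and then verifies that $\Pi(p)$ is a surjective multivalued fibration. Consequently the paper lifts in the opposite order --- it first lifts $g_1$ anywhere by surjectivity and then lifts $g_2$ with prescribed target --- whereas you lift $g_2$ first with prescribed source, which is the more immediate use of the homotopy lifting property. Your direct argument is shorter and self-contained; the paper's detour buys reusability, since the same lemma covers surjective Grothendieck fibrations and other categorical examples mentioned elsewhere in the text. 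Either way, your observation that no compatibility check between representatives is needed (because the $2$-lifting axiom only asks for existence of some composable lifts) is exactly right.
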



Before we prove this proposition, we first recall the definition of multivalued fibration. Lemma \ref{lem: surjective multivalued fibration is $2$-lifting} shows that any surjective multivalued fibration is a 2-lifting functor, then it suffices to prove that $\Pi(p)$ is a surjective multivalued fibration.

\begin{definition} \cite[Definition 3.2]{MR3538977} \label{def: multivalued fibration}
    A \textbf{multivalued fibration} is a functor $\mathcal{P}: \mathcal{E} \to \mathcal{B}$ such that for any morphism $g: B_0 \to B_1$ in $\mathcal{B}$ and any object $E_1$ in $\mathcal{E}$ with $\mathcal{P}(E_1) = B_1$, there exists (a not necessarily unique) $f: E_0 \to E_1$ satisfying $\mathcal{P}(f) = g$.
\end{definition}

\begin{lemma} \label{lem: surjective multivalued fibration is $2$-lifting}
    Any surjective multivalued fibration $\mathcal{P}: \mathcal{E} \to \mathcal{B}$ is $2$-lifting.
\end{lemma}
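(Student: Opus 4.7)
The plan is to chase the definitions directly, using the multivalued fibration property twice in succession to build a composable lift from the top.

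First I would unpack what has to be verified. Given composable morphisms $g_1 : B_1 \to B_2$ and $g_2 : B_0 \to B_1$ in $\mathcal{B}$ (so that $g_1 \circ g_2$ exists), I need to produce composable $f_1, f_2$ in $\mathcal{E}$ with $\mathcal{P}(f_i) = g_i$. The natural idea is to pick any preimage of the top object $B_2$ and lift the two morphisms one at a time, in the reverse direction of composition, so that the intermediate object is automatically shared.

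The key steps, in order, are: (i) invoke surjectivity of $\mathcal{P}$ on objects to pick some $E_2 \in \mathcal{E}$ with $\mathcal{P}(E_2) = B_2$; (ii) apply the multivalued fibration property to the morphism $g_1 : B_1 \to B_2$ and the object $E_2$ to obtain a morphism $f_1 : E_1 \to E_2$ with $\mathcal{P}(f_1) = g_1$ (note that the source object $E_1$ of $f_1$ is produced by the fibration property and automatically satisfies $\mathcal{P}(E_1) = B_1$, since $\mathcal{P}$ is a functor); (iii) apply the multivalued fibration property again, this time to the morphism $g_2 : B_0 \to B_1$ and the object $E_1$ obtained in step (ii), to get a morphism $f_2 : E_0 \to E_1$ with $\mathcal{P}(f_2) = g_2$; (iv) observe that $f_1 \circ f_2$ is well defined because the target of $f_2$ and the source of $f_1$ are both equal to $E_1$, and that $\mathcal{P}(f_1) = g_1$, $\mathcal{P}(f_2) = g_2$ by construction.

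There is no real obstacle here; the proof is essentially a two-step diagram chase, and the only subtlety is to notice that it is crucial to lift $g_1$ before $g_2$, so that the shared intermediate object $E_1$ is produced by the first lift and can then be reused as the target in the second application of the fibration property. Surjectivity of $\mathcal{P}$ on objects is needed only to begin the process by picking $E_2$, since the multivalued fibration axiom requires a lift of the target object to be supplied in advance.
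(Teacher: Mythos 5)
Your proof is correct and follows essentially the same route as the paper: lift the later morphism $g_1$ first so that its source determines the intermediate object, then use the multivalued fibration property to lift $g_2$ into that object. The only cosmetic difference is that the paper obtains $f_1$ directly from surjectivity on morphisms, whereas you obtain it by choosing an object over $B_2$ and applying the fibration axiom once more; the two are interchangeable here.
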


\begin{proof}
    Suppose that $g_2: A \to B, g_1: B \to C$ are two morphisms in $\mathcal{B}$. Note that $\mathcal{P}$ is surjective, so we can first find a morphism $f_1: \widetilde{B} \to \widetilde{C}$ in $\mathcal{E}$ such that $\mathcal{P}(f_1) = g_1$. Since $\mathcal{P}$ is a multivalued fibration, we can lift $g_2$ to some $f_2: \widetilde{A} \to \widetilde{B}$, with the target of $f_2$ and the source of $f_1$ shared. Then $f_1 \circ f_2$ exists.
\end{proof}

\begin{proof}[Proof of Proposition \ref{prop: fibration induces $2$-lifting functor}]
    We just need to prove that $\Pi(p)$ is a surjective multivalued fibration. 
    
    We first show the fibration $p$ is surjective. Fix a base point $e_0$ in $E$. For any point $b \in B$, because $B$ is path-connected, there exists a path $\gamma$ from $b$ to $p(e_0)$. Since $p$ is a fibration, we can lift $\gamma$ to a path $\widetilde{\gamma}$ from some $\tilde{b} \in E$ to $e_0$, then $p(\tilde{b}) = b$.

    For any path $\beta: b_0 \to b_1$ in $B$, we can find \textit{some} point $e_1$ in $E$ such that $p(e_1) = b_1$ by the surjectivity of $p$. Since $p$ is a fibration, there exists some path $\alpha: e_0 \to e_1$ such that $p(\alpha) = \beta$. Consider $\alpha$ and $\beta$ as morphisms in the fundamental groupoids, we have $\Pi(p)(\alpha) = \beta$, meaning $\Pi(p)$ is surjective.

    Lastly we check that $\Pi(p)$ is a multivalued fibration. For any path $\beta: b_0 \to b_1$ in $B$ and \textit{any} point $e_1'$ in $E$ such that $p(e_1') = b_1$, the path $\beta$ can be lifted to some path $\alpha: e_0 \to e_1'$. Then $\Pi(p)(\alpha) = \beta$, meaning $\Pi(p)$ is a multivalued fibration.
\end{proof}

\begin{remark}
    It is well known that if $p$ is a fibration, then $\Pi(p)$ is a Grothendieck fibration, which is stronger than being a multivalued fibration. In the proof of Proposition \ref{prop: fibration induces $2$-lifting functor} we give a self-contained proof that $\Pi(p)$ is a multivalued fibration, because multivalued fibration is sufficient to guarantee the $2$-lifting property.
    
    It is worth noting that any surjective Grothendieck fibration is $2$-lifting, and hence can pullback the concentration structures on the base category.
\end{remark}


With these results, we can now combine the concentration structures on fundamental groupoids with the pullback property for $2$-lifting functors to prove Theorem \ref{thm: any group can be recovered as concentration group of trivial category}.

\begin{proof}[Proof of Theorem \ref{thm: any group can be recovered as concentration group of trivial category}]
    For any group $G$, choose a path-connected topological space $X$ with $\pi_1(X)\cong G$. 

    By Theorem \ref{thm: concentration monoid of fundamental groupoid is fundamental group}, there exists a concentration structure $\sim$ on $\Pi(X)$ with concentration group $M_{(\Pi(X),\sim)} \cong G$. Let $\tilde{X}$ be the universal cover of $X$, with covering map $p:\tilde{X} \to X$. In particular, $p$ is a fibration. By Proposition \ref{prop: fibration induces $2$-lifting functor}, $p$ induces a $2$-lifting functor $\Pi(p):\Pi(\tilde{X}) \to \Pi(X)$. Then $\Pi(\tilde{X})$ admits a pullback concentration structure $\sim^{p^*}$. Lemma \ref{lem: pullback concentration induce isomorphism on concentration monoids} implies that
    $$
    M_{(\Pi(\tilde{X}),\sim^{p^*})} \cong M_{(\Pi(X),\sim)} \cong G
    $$
    
    Since $\tilde{X}$ is simply connected, it is easy to verify that $\Pi(\tilde{X})$ is equivalent to the trivial category.
\end{proof}

\bibliographystyle{alpha}
\bibliography{bib}







\end{document}